\let\ams@starttoc\@starttoc
\let\@starttoc\ams@starttoc
\patchcmd{\@starttoc}{\makeatletter}{\makeatletter\parskip\z@}{}{}
\newtheorem{thm}{Theorem}[section]
\newtheorem{lemma}[thm]{Lemma}
\newtheorem{prop}[thm]{Proposition}
\newtheorem{cor}[thm]{Corollary}
\newtheorem{df}[thm]{Definition}
\newtheorem{rem}[thm]{Remark}
\newtheorem*{thm*}{Theorem}
\newtheorem*{prop*}{Proposition}
\newtheorem*{lemma*}{Lemma}
\newtheorem*{clai*}{Claim}
\newtheorem*{exam*}{Example}
\newcommand{\tr}{\mbox{tr}}
\renewcommand{\div}{\mbox{div}}
\newcommand{\Ric}{\mbox{Ric}}
\newcommand{\R}{\mathbb R}
\newcommand{\Z}{\mathbb Z}
\numberwithin{equation}{section}
\newcommand{\be}{\begin{equation}}
\newcommand{\ee}{\end{equation}}
\def\p{\partial}
\def\la{\langle}
\def\ra{\rangle}
\def\lf{\left}
\def\ri{\right}
\def\Pi{\displaystyle{\mathbb{II}}}
\def\Ric{\text{\rm Ric}}
\def\e{\epsilon}
\def\a{\alpha}
\def\bg{\bar{g}}
\def\ol{\overline}
\def\sing{\operatorname{sing}}
\def\dist{\operatorname{dist}}
\def\loc{\operatorname{loc}}
\def\spt{\operatorname{spt}}
\def\dvol{dv}
\def\divg{\operatorname{div}}
\def\eps{\epsilon}
\def\madm{\mathfrak{m}{_{ADM}}}
\def\bee{\begin{equation*}}
\def\eee{\end{equation*}}
\def\mby{\mathfrak{m}{_{BY}}}
\def\mhawk{\mathfrak{m}{_{H}}}
\def\wt{\widetilde}
\def\wh{\widehat}
\def\Sh{\Sigma_{H}}
\def\FF{\mathcal{F}}
\def\LL{\Lambda}
\def\F{\mathring{\mathcal{F}}}
\def\L{\mathring{\Lambda}}
\def\Fcirc{\mathring{\FF}}
\def\Lcirc{\mathring{\LL}}
\def\C{\mathcal{C}}
\def\CgSM{\C_g(\Sigma, M)}
\def\exp{\mathrm{exp}}
\def\sff{\mathrm{I\!I}}
\newcounter{mnotecount}[section]
\title
{Capacity, quasi-local mass, and singular fill-ins}
\author{Christos Mantoulidis$^1$}
\address{Department of Mathematics\\Massachusetts Institute of Technology\\Cambridge, MA 02139}
\email{c.mantoulidis@mit.edu}
\thanks{{$^1$Research partially supported by NSF Grant DMS-1905165, and completed at Institut Mittag-Leffler's “General relativity, geometry and analysis” workshop, supported from grant 2016-06596 of the Swedish Research Council.}}
\author{Pengzi Miao$^2$}
\address{Department of Mathematics\\University of Miami\\Coral Gables, FL 33146, USA}
\email{pengzim@math.miami.edu}
\thanks{{$^2$Research partially supported by NSF Grant DMS-1906423.}}
\author{Luen-Fai Tam$^3$}
\address{The Institute of Mathematical Sciences and Department of Mathematics\\The Chinese University of Hong Kong\\Shatin, Hong Kong, China}
\email{lftam@math.cuhk.edu.hk}
\thanks{{$^3$}Research partially supported by Hong Kong RGC General Research Fund \#CUHK 14301517}
\subjclass[2010]{Primary 53C20; Secondary 83C99}
\date{\today}
\begin{document}
	
\begin{abstract}
	 We derive new inequalities between the boundary capacity of an asymptotically flat 3-manifold with nonnegative scalar curvature and boundary quantities that relate to quasi-local mass; one relates to Brown--York mass and the other is new. We argue by recasting the setup to the study of mean-convex fill-ins with nonnegative scalar curvature and, in the process, we consider fill-ins with singular metrics, which may have independent interest. Among other things, our work yields new variational characterizations of Riemannian Schwarzschild manifolds and new comparison results for surfaces in them.
\end{abstract}

\keywords{capacity, quasi-local mass, scalar curvature}
\maketitle
\markboth{Christos Mantoulidis, Pengzi Miao and Luen-Fai Tam}{Capacity, quasi-local mass, singular fill-ins}

\section{Introduction} \label{s-intro}

The Riemannian (``time-symmetric'') case of the Penrose Conjecture \cite{Penrose73} was settled in different forms by Huisken--Ilmanen \cite{HuiskenIlmanen01} and by Bray \cite{Bray01}. The latter, specifically, proved the following relation between the \emph{ADM mass} \cite{ADM59} of an asymptotically flat manifold and the \emph{Hawking quasi-local mass} \cite{Hawking68} of its apparent horizon:

\begin{thm*}[Riemannian Penrose Inequality, {\cite[Theorem 1]{Bray01}}]
	Let $(M^3, g)$ be complete, asymptotically flat, with nonnegative scalar curvature and an outer minimizing horizon $\Sigma$ (possibly disconnected) with area $|\Sigma|$ and ADM mass $\madm(M, g)$. Then
	\begin{equation} \label{e-RPI}
		\sqrt{\frac{|\Sigma|}{16\pi}} \leq \madm(M, g).
	\end{equation}
	Equality holds if and only if, outside $\Sigma$, $(M, g)$ is isometric to a Riemannian half-Schwarzschild manifold.
\end{thm*}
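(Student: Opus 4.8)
The plan is to reproduce Bray's conformal-flow-of-metrics argument \cite{Bray01}, which — unlike the inverse-mean-curvature-flow proof of Huisken--Ilmanen \cite{HuiskenIlmanen01}, tuned to a connected horizon — accommodates the possibly disconnected $\Sigma$ directly. First I would reduce to the case in which $\Sigma$ is the outermost minimal surface of $(M,g)$: the outer-minimizing hypothesis guarantees that replacing $\Sigma$ by the outermost minimal surface only increases the enclosed area, so it suffices to prove \eqref{e-RPI} in that case. Write $g_0 = g$ and run the flow $t \mapsto g_t = u_t^4 g_0$, $u_0 \equiv 1$, whose conformal factor is assembled from harmonic functions: letting $\Sigma_t$ denote the outermost minimal surface of $(M, g_t)$ and $\Omega_t$ the unbounded region it bounds, one sets $\partial_t u_t = v_t$, where $v_t$ is the $g_0$-harmonic function on $\Omega_t$ that vanishes on $\Sigma_t$ and tends to $-e^{-t}$ at infinity (extended appropriately inside $\Sigma_t$). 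Since $\Delta_{g_0} u_t = 0$ on $\Omega_t$ and $u_t$ is positive and superharmonic, the conformal transformation law $R_{g_t} = u_t^{-5}\left(-8\Delta_{g_0} u_t + R_{g_0} u_t\right)$ keeps $(M, g_t)$ asymptotically flat with $R_{g_t} \ge 0$; the metrics are only Lipschitz across $\Sigma_t$, which is a benign corner.

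The two facts that power the argument are: (i) the horizon area $|\Sigma_t|_{g_t}$ is \emph{constant} in $t$; and (ii) the ADM mass $m(t) := \madm(M, g_t)$ is \emph{non-increasing}. For (i): $\partial_t g_t = 4 u_t^3 v_t g_0$ vanishes on $\Sigma_t$ because $v_t|_{\Sigma_t} = 0$, so the metric-variation contribution to $\tfrac{d}{dt}|\Sigma_t|_{g_t}$ is zero, while the boundary-motion contribution vanishes by minimality of $\Sigma_t$ in $g_t$ (the jump times, when a new enclosing minimal surface appears, being handled using that $\Sigma_t$ moves monotonically outward through surfaces of equal $g_t$-area). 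Fact (ii) is the crux and the step I expect to be the main obstacle: Bray establishes $m'(t) \le 0$ through a subtle argument that reflects $(\Omega_t, g_t)$ across the minimal surface $\Sigma_t$ — exploiting precisely that minimality makes the doubled (still Lipschitz, still nonnegatively curved in the corner sense) metric admissible for the Riemannian positive mass theorem — and then extracts the sign of $m'(t)$ from that theorem applied to the double. Making this rigorous requires the full strength of the positive mass theorem together with elliptic estimates for the family $\{u_t\}$ uniform enough in $t$ to differentiate $m(t)$ and control its limit; this is the technically hardest part of the whole proof.

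Finally I would analyze $t \to \infty$: the flow is engineered so that $(\Omega_t, g_t)$ converges — smoothly away from the horizon, under the natural identification — to a spatial Schwarzschild manifold of some mass $m_\infty$, whose horizon $\Sigma_\infty$ satisfies the exact identity $m_\infty = \sqrt{|\Sigma_\infty|_{g_\infty}/(16\pi)}$ by an explicit computation. Combining this with (i) and (ii),
\begin{equation*}
  \madm(M,g) = m(0) \ \ge\ \lim_{t\to\infty} m(t) = m_\infty = \sqrt{\frac{|\Sigma_\infty|_{g_\infty}}{16\pi}} = \sqrt{\frac{|\Sigma_0|_{g_0}}{16\pi}} = \sqrt{\frac{|\Sigma|}{16\pi}},
\end{equation*}
which is \eqref{e-RPI}. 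For the rigidity statement: equality forces $m(t)$ to be constant, hence the mass comparison in step (ii) is an equality for every $t$; the rigidity case of the positive mass theorem applied to each doubled manifold then forces $(\Omega_t, g_t)$ — in particular $(M,g)$ outside $\Sigma$ — to be isometric to a Riemannian half-Schwarzschild manifold (and, incidentally, $\Sigma$ connected). The converse, that half-Schwarzschild attains equality, is the explicit computation already used above.
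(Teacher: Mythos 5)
This theorem is not proved in the paper at all: it appears in the introduction as a quoted result, attributed explicitly to Bray (\cite[Theorem~1]{Bray01}), and the authors use it only as context for motivating the capacity and fill-in inequalities that are the paper's actual subject. So there is no ``paper's own proof'' to compare against.

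Your outline is, as far as it goes, a faithful high-level summary of Bray's conformal flow argument from the cited reference: the reduction to the outermost minimal surface, the flow $g_t = u_t^4 g_0$ driven by harmonic conformal velocities $v_t$ vanishing on $\Sigma_t$, the two monotonicity facts ($|\Sigma_t|_{g_t}$ constant, $\madm(M,g_t)$ non-increasing), the convergence to Schwarzschild, and the rigidity via the positive mass theorem. You correctly identify the mass-monotonicity step as where essentially all the work lies, and you leave it as a black box; what you have written is therefore an outline rather than a proof. Two small imprecisions worth noting: $u_t$ is not harmonic on $\Omega_t$ (only each $v_s$ is harmonic outside $\Sigma_s$, so $u_t$ is superharmonic on the exterior region and that suffices for $R_{g_t}\ge 0$); and the monotonicity of $m(t)$ in Bray's proof is obtained not merely by doubling across $\Sigma_t$ but by a reflection-plus-conformal-blowup that produces an auxiliary asymptotically flat manifold to which the positive mass theorem is applied, which is a more delicate construction than ``apply the PMT to the Lipschitz double.'' Since the present paper simply quotes the theorem, none of this bears on the paper itself; it bears only on whether your sketch would survive being expanded into a full argument, and at the level of a sketch it is reasonable.
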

Recall that in the Riemannian setting horizons are closed minimal surfaces, and their Hawking mass is the left hand side of \eqref{e-RPI}.

Harmonic potentials and the notion of \emph{boundary capacity} played an important role in Bray's work by virtue of the following relation between ADM mass and boundary capacity:

\begin{thm*}[Mass--Capacity inequality, {\cite[Theorem 9]{Bray01}}]
	Let $(M^3, g)$ be complete, asymptotically flat, with nonnegative scalar curvature and with horizon boundary $\Sigma$. Then
	\be \label{e-m-cap}
		\CgSM \leq \madm(M, g),
	\ee
	with $\CgSM$ as in Definition \ref{d-capacity}. Equality holds if and only if $(M, g)$ is isometric to a Riemannian half-Schwarzschild manifold.
\end{thm*}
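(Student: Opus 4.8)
The plan is to convert \eqref{e-m-cap} into a statement to which the Positive Mass Theorem applies, by means of a single conformal change generated by the capacity potential, and then to absorb the boundary left over by gluing on a mean-convex fill-in of nonnegative scalar curvature (admitting singularities if necessary).

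\emph{The conformal change.} Let $\phi\colon M\to[0,1)$ be the harmonic potential of Definition \ref{d-capacity}: $\Delta_g\phi=0$, $\phi|_\Sigma=0$, $\phi\to1$ at infinity. Then $0<\phi<1$ in the interior, $\phi=1-\CgSM|x|^{-1}+O(|x|^{-2})$ at infinity with $\CgSM>0$, and $\partial_\nu\phi>0$ on $\Sigma$ by the Hopf lemma, where $\nu$ is the unit normal pointing into $M$. Set $u=\tfrac12(1+\phi)$ and $\bar g=u^4g$. Since $\Delta_gu=0$ and $R_g\ge0$, the conformal transformation law gives $R_{\bar g}=u^{-4}R_g\ge0$; since $u=1-\tfrac12\CgSM|x|^{-1}+O(|x|^{-2})$, the manifold $(M,\bar g)$ is again asymptotically flat with $\madm(M,\bar g)=\madm(M,g)-\CgSM$; and since $\Sigma$ is a horizon ($H_g=0$), the law $H_{\bar g}=u^{-2}\bigl(H_g+4u^{-1}\partial_\nu u\bigr)$ gives $H_{\bar g}|_\Sigma=16\,\partial_\nu\phi|_\Sigma>0$. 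Hence \eqref{e-m-cap} is exactly the assertion that $\madm(M,\bar g)\ge0$, where now $(M,\bar g)$ is asymptotically flat with $R_{\bar g}\ge0$ and has compact, strictly mean-convex boundary $\Sigma$ (mean curvature vector pointing out of $M$).

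\emph{Filling in and applying the Positive Mass Theorem.} Since $\Sigma$ is an outer-minimizing minimal surface in the $R_g\ge0$ manifold $(M,g)$, each component is a $2$-sphere whose induced metric satisfies $\lambda_1(-\Delta_\Sigma+K_\Sigma)\ge0$, and this persists, being invariant in sign under the conformal rescaling, for $\bar g|_\Sigma$. The key input is then a fill-in lemma: there is a compact $(N,h)$ — possibly with isolated conical singularities of nonnegative mass aspect — with $R_h\ge0$, with $\partial N$ isometric to $(\Sigma,\bar g|_\Sigma)$, and with boundary mean curvature (w.r.t.\ the outward normal) at least $H_{\bar g}|_\Sigma$. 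Gluing $(N,h)$ to $(M,\bar g)$ along $\Sigma$ yields a complete asymptotically flat $(\wt M,\wt g)$, with the same end, hence the same ADM mass, as $(M,\bar g)$, whose metric is Lipschitz across $\Sigma$ with distributional scalar curvature $\ge0$ there by the mean-curvature comparison (Miao; Shi--Tam; McFeron--Sz\'ekelyhidi). The Positive Mass Theorem for such metrics gives $0\le\madm(\wt M,\wt g)=\madm(M,\bar g)=\madm(M,g)-\CgSM$, which is \eqref{e-m-cap}. This filling-in step is the crux: prescribing a \emph{large} boundary mean curvature is the favorable direction of the fill-in problem, but the borderline case $\lambda_1(-\Delta_\Sigma+K_\Sigma)=0$ on a component, and the verification that the conical and corner contributions to the distributional scalar curvature carry the right sign, are where the real work — and the ``singular fill-ins'' of the title — lie.

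\emph{Rigidity.} If equality holds in \eqref{e-m-cap} then $\madm(\wt M,\wt g)=0$, so by the rigidity of the Positive Mass Theorem (with corners) $(\wt M,\wt g)$ is isometric to flat $\R^3$, with no corner along $\Sigma$ and no conical points; in particular $\bar g=u^4g$ is flat. Writing $w=u^{-1}$, this makes $w$ a positive $\delta$-harmonic function on the exterior region $M\hookrightarrow\R^3$ with $w\to1$ at infinity and with the level set $\{w=2\}=\Sigma$ a minimal surface of $g=w^4\delta$. A Serrin-type moving-plane argument — or the rigidity discussion of \cite{Bray01} — then forces $w=1+\tfrac{m}{2|x|}$ after a translation, identifying $(M,g)$ outside $\Sigma$ with a Riemannian half-Schwarzschild manifold.
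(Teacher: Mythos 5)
The theorem you are proving is quoted in the paper from Bray's work; the paper does not re-prove it directly, though it recovers it as a limiting case of Theorem \ref{t-LC-2} via the Bunting--Masood-ul-Alam reflection. Your overall skeleton (single conformal change by a function built from the capacity potential, so that the ADM mass drops by $\CgSM$, then absorb the boundary by gluing on a singular fill-in and invoke the Positive Mass Theorem with corners) is in fact the correct shape of Bray's argument. Your bookkeeping for the mass shift $\madm(\bar g)=\madm(g)-\CgSM$, the scalar curvature $R_{\bar g}=u^{-4}R_g\ge0$, and the boundary mean curvature $H_{\bar g}|_\Sigma = 16\,\partial_\nu\phi>0$ is also correct. (Note your potential has the opposite normalization from the paper's Definition \ref{d-capacity}: the paper takes $\phi=1$ on $\Sigma$ and $\phi\to 0$ at infinity.)

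However, there is a genuine gap at the step you call the ``key input,'' and your characterization of it as ``the favorable direction of the fill-in problem'' is exactly backwards. A fill-in $(N,h)$ with $R_h\ge 0$ and $\partial N\cong(\Sigma,\bar g|_\Sigma)$ must satisfy $\int_{\partial N} H_h\,d\sigma \le 8\pi\,\LL(\Sigma,\bar g|_\Sigma)$ by Definition \ref{d-f-l}; that is, the total boundary mean curvature of a nonnegative-scalar-curvature fill-in is bounded \emph{above}, so prescribing a \emph{large} mean curvature is a constraint, not a freedom. There is no general lemma asserting that a mean-convex fill-in with $H_h\ge H_{\bar g}|_\Sigma$ exists (Mantoulidis--Schoen produce fill-ins with $H=0$ from the $\lambda_1(-\Delta_\Sigma+K_\Sigma)>0$ condition, which is the opposite extreme), and you cannot simply cite one. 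What rescues the argument in this specific situation is that $\Sigma$ is a horizon: reflect $(M,g)$ across $\Sigma$ to get $(\wt M,\wt g)$, set $v=\tfrac12(1+\phi)$ on $M$ and $v=\tfrac12(1-\wt\phi)$ on $\wt M$; because $H_g\equiv 0$ on $\Sigma$, the two sides produce \emph{equal} boundary mean curvatures for $v^4 g$ and $v^4\wt g$ (compare the paper's \eqref{e-LC-2-H-plus}--\eqref{e-LC-2-H-jump}), so there is actually no corner, and $v\to 0$ at $\infty_{\wt M}$ lets one compactify that end to a point (the source of the $W^{1,p}$ point singularity — that, and not the borderline $\lambda_1=0$, is where the ``singular fill-ins'' of the title enter; cf.\ Lemma \ref{l-compactification}). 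Your proof posits an abstract $(N,h)$ where you should be constructing precisely this reflection. The rigidity sketch is reasonable in outline but inherits the same gap, since it rests on the same unproven fill-in lemma.
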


\begin{df}[Capacity] \label{d-capacity}
	Let $(M^3, g)$ be a complete, asymptotically flat Riemannian manifold with compact boundary $\Sigma$. Let $\phi : M \to \R$ (the ``boundary capacity potential'') be such that
	\begin{equation} \label{e-capacity-pde}
		\begin{array}{rl}
			\Delta_g \phi = 0 & \mathrm{on} \ M, \\
			\phi = 1 & \mathrm{at} \ \Sigma, \\
			\phi \rightarrow 0 & \mathrm{as} \ x \rightarrow \infty .
		\end{array}
	\end{equation}
	The boundary capacity of $\Sigma \subset (M, g)$ is:
	\be \label{e-capacity-meaning}
	\CgSM := \frac{1}{4 \pi} \int_M | \nabla \phi |^2 \, dv = \frac{1}{4\pi} \int_{\Sigma} \tfrac{\p}{\p \eta} \phi \, d\sigma,
	\ee
	where $\eta$ is the unit normal pointing outside of $M$. (The second equality in \eqref{e-capacity-meaning} follows from  Lemma \ref{l-phi}.)
\end{df}

Before stating the main results in this paper, we first state a corollary of Theorem \ref{t-LC-1} that needs no new definitions:

\begin{cor} \label{c-capacity-Gauss}
	Let $(M^3, g)$ be complete, asymptotically flat, with nonnegative scalar curvature and horizon boundary $\Sigma$ that consists of  spheres with positive Gauss curvature. Then:
	\begin{equation} \label{e-capacity-spheres-Gauss}
		\CgSM \leq \frac{1}{16\pi} \int_{\Sigma} H_{g_0} \, d\sigma
	\end{equation}
	where $d\sigma$ is the induced area form and $H_{g_0}$ is the mean curvature of the isometric embedding of each boundary sphere of $\Sigma$ in $\R^3$. Equality holds if and only if $(M, g)$ is isometric to a Riemannian half-Schwarzschild manifold with mass $\CgSM$.
\end{cor}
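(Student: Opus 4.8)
The plan is to derive \eqref{e-capacity-spheres-Gauss} as the special case of Theorem \ref{t-LC-1} in which the reference boundary geometry is taken to be the classical isometric embedding into Euclidean space. First I would observe that each connected component $(\Sigma_i, g|_{\Sigma_i})$ of the horizon is, by hypothesis, a two-sphere with a metric of positive Gauss curvature, so by the Weyl embedding theorem (Nirenberg, Pogorelov) it admits an isometric embedding $\iota_i : \Sigma_i \hookrightarrow \R^3$, unique up to an ambient isometry, whose image is a closed convex surface. Hence the outward mean curvature $H_{g_0}$ of $\iota_i(\Sigma_i)$ is strictly positive and is determined solely by the intrinsic metric $g|_{\Sigma_i}$, so the boundary integral $\tfrac{1}{16\pi}\int_\Sigma H_{g_0}\, d\sigma$ in \eqref{e-capacity-spheres-Gauss} is unambiguously defined; this embedding is precisely the model datum against which the boundary term in Theorem \ref{t-LC-1} is measured.

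Next I would apply Theorem \ref{t-LC-1} to $(M^3, g)$ with $\bigsqcup_i \iota_i$ as the reference. The boundary term produced by that theorem a priori involves both the model mean curvature $H_{g_0}$ and the mean curvature $H_g$ of $\Sigma$ as a hypersurface in $(M, g)$; but $\Sigma$ is a horizon, hence minimal, so $H_g \equiv 0$ and the boundary term collapses to exactly $\tfrac{1}{16\pi}\int_\Sigma H_{g_0}\, d\sigma$, which gives \eqref{e-capacity-spheres-Gauss}. (If Theorem \ref{t-LC-1} is instead phrased through an infimum over admissible mean-convex, scalar-nonnegative fill-ins of $\Sigma$, one reaches the same conclusion by exhibiting the solid convex Euclidean body bounded by $\bigsqcup_i \iota_i(\Sigma_i)$, after a small collar modification turning its boundary into a minimal surface, as one admissible competitor.)

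For the rigidity statement, the forward implication is immediate from the equality clause of Theorem \ref{t-LC-1}: equality in \eqref{e-capacity-spheres-Gauss} forces $(M, g)$ to be a Riemannian half-Schwarzschild manifold of some mass $m > 0$. It remains to identify $m$ with $\CgSM$ and to check the converse. On the half-Schwarzschild manifold $\{ |x| \geq m/2 \} \subset \R^3$ with $g = (1 + \tfrac{m}{2|x|})^4 \delta$, the horizon $\{ |x| = m/2 \}$ is intrinsically a round sphere of radius $2m$, whose Weyl embedding is the Euclidean round sphere of radius $2m$; thus $H_{g_0} \equiv \tfrac1m$, $|\Sigma| = 16\pi m^2$, and $\tfrac{1}{16\pi}\int_\Sigma H_{g_0}\, d\sigma = m$. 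On the other hand the capacity potential is $\phi = m/(|x| + \tfrac{m}{2})$, since $(1 + \tfrac{m}{2|x|})\phi$ is Euclidean-harmonic, vanishes at infinity, and equals $1$ on the horizon; a direct computation of $\tfrac{1}{4\pi}\int_\Sigma \tfrac{\partial}{\partial\eta}\phi\, d\sigma$ then gives $\CgSM = m$, equivalently $\CgSM = \madm = m$ by the equality case of the mass--capacity inequality. Hence $\CgSM = \tfrac{1}{16\pi}\int_\Sigma H_{g_0}\, d\sigma = m$, which both pins down the mass in the rigidity statement and yields the converse (the round horizon of half-Schwarzschild is of course a sphere with positive Gauss curvature, so the hypotheses are met).

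The genuine content is carried by Theorem \ref{t-LC-1}, so granting that, the corollary is essentially bookkeeping: the only corollary-specific points are that positive Gauss curvature is exactly what makes the Weyl embedding—and therefore $H_{g_0}$—available, and the elementary identification of the mass in the equality case. Were one instead to prove \eqref{e-capacity-spheres-Gauss} from scratch, the main obstacle would be the capacity comparison itself, i.e.\ bounding $\CgSM$ from above by a boundary integral; this is where the paper's fill-in machinery—including, as the abstract indicates, fill-ins carrying singular metrics—does the work, ultimately through auxiliary scalar-nonnegative metrics and the positive mass theorem, in the spirit of Bray's argument for the mass--capacity inequality, or through a scalar-curvature-preserving gluing across $\Sigma$ that does not increase capacity.
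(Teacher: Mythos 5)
Your proof is correct and follows the same route as the paper, whose own proof of Corollary \ref{c-capacity-Gauss} is literally ``This is Theorem \ref{t-LC-1} with $H_g = 0$ on $\Sigma$'': with the horizon minimal, \eqref{e-LC-1} becomes $2\,\CgSM \le \LL(\Sigma,\gamma)$, and the identification of $\LL(\Sigma,\gamma)$ with $\tfrac{1}{8\pi}\int_\Sigma H_{g_0}\,d\sigma$ for unions of positive Gauss curvature spheres is exactly \eqref{e-ll-h-gauss}, i.e.\ Shi--Tam's theorem. Your Schwarzschild computation for the rigidity direction matches the paper's Section \ref{s-schwarzschild-computations} and correctly identifies the mass.

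One place where your write-up misdescribes the mechanism: the quantity $\LL(\Sigma,\gamma)$ is a \emph{supremum} over mean-convex, scalar-nonnegative fill-ins (Definition \ref{d-ff-ll}), not an infimum, so your parenthetical fallback is backwards. Exhibiting the solid convex Euclidean body as a competitor yields only the easy lower bound $\LL(\Sigma,\gamma) \ge \tfrac{1}{8\pi}\int_\Sigma H_{g_0}\,d\sigma$; what the corollary actually needs is the reverse inequality $\LL(\Sigma,\gamma) \le \tfrac{1}{8\pi}\int_\Sigma H_{g_0}\,d\sigma$, which is the substantive content of Shi--Tam's theorem (stated just before \eqref{e-shi-tam}) and cannot be obtained by producing one fill-in. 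The suggested ``small collar modification turning its boundary into a minimal surface'' is also out of place: elements of $\FF(\Sigma,\gamma)$ must have strictly mean-convex boundary inducing the prescribed metric $\gamma$, and the Euclidean convex body already qualifies as is. None of this affects your main line of argument, which is the right one.
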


\begin{rem}[Mean curvature convention] \label{r-mean-curvature}
	Suppose $\Sigma$ is a hypersurface in a Riemannian manifold. Its vector-valued second fundamental form is defined as $\vec \sff(X,Y)= (\nabla_XY)^\perp$ for $X, Y$ tangent to $\Sigma$, where $Z^\perp$ is the normal projection of $Z$. Its mean curvature vector $\vec H $ is the trace of $\vec \sff$. If $\nu$ is a unit normal to $\Sigma$, then the mean curvature with respect to $\nu$ is given by $H=H(\nu)=g(-\vec H,\nu)$. Unless stated otherwise, for hypersurfaces $\Sigma$ inside an asymptotically flat manifold (with one end---see Definition \ref{d-AF}), $\nu$ will be  taken to be the unit normal pointing to the distinguished end. When there is no asymptotically flat manifold, and $\Sigma$ is the boundary of a compact manifold $(\Omega, g)$  (e.g., Sections \ref{s-singular}, \ref{s-isolated-singularities}, and in defining $\LL(\Sigma, \gamma)$, $\Lcirc(\Sigma, \gamma)$ in Definition \ref{d-ff-ll}), $\nu$ will be taken to be the unit normal pointing outside of $\Omega$.
\end{rem}

The isometric embedding used to define $H_{g_0}$ exists since, by \cite{Nirenberg53, Pogorelov64}, all Riemannian two-spheres with positive Gauss curvature can be uniquely isometrically embedded into $(\R^3, g_0)$. If $\Sigma$ does not consist of spheres with positive Gauss curvature, then one may not be able to isometrically embed $\Sigma$ in $\R^3$, so the right hand side of \eqref{e-capacity-spheres-Gauss} is not well-defined. Hence, following \cite{M-M}, we consider the $\Lambda$-invariant instead:

\begin{df} \label{d-ff-ll} \label{d-f-l}
	Let $\Sigma$ be a closed, orientable surface (possibly disconnected) endowed with a Riemannian metric $\gamma$. Denote as $\FF(\Sigma, \gamma)$ the set of compact, connected Riemannian manifolds $(\Omega^3, g)$ with boundary such that:
	\begin{enumerate}
		\item[(i)] $\p \Omega$, with the induced metric, is isometric to $(\Sigma, \gamma)$,
		\item[(ii)] the mean curvature vector of $\p \Omega$ points strictly inward, and
		\item[(iii)] $R(g) \geq 0$, where $R(g)$ is the scalar curvature of $g$.
	\end{enumerate}
	Then, define
	\[ \LL(\Sigma, \gamma) := \sup \left\{ \frac{1}{8\pi} \int_{\p \Omega} H_g \, d\sigma \; | \; (\Omega, g) \in \FF(\Sigma, \gamma) \right\}. \]
	Let $\Fcirc(\Sigma,\gamma)$ be the set of compact, connected 3-dimensional $(\Omega, g)$ with $\p \Omega = \Sigma_O \sqcup \Sigma_H$ such that (i)-(ii) hold with $\Sigma_O$ in place of $\p \Omega$, (iii), and
	\begin{enumerate}
		\item [(iv)] $\Sigma_H$, if nonempty, is an arbitrary minimal surface (possibly disconnected).
	\end{enumerate}
	Correspondingly, define
	\[ \Lcirc(\Sigma,  \gamma) :=\sup \left\{ \frac{1}{8\pi} \int_{\p \Omega} H_g \, d\sigma \; | \; (\Omega, g) \in \Fcirc(\Sigma, \gamma) \right\}. \]
	Above, $H_g$ denotes the mean curvature of $\p \Omega$ with respect to the  unit normal pointing outside of $\Omega$. (Note: $\Fcirc$, $\Lcirc$ were  defined  for connected $\Sigma$ in \cite{M-M}, but the same results hold for general $\Sigma$; see Appendix \ref{s-f-l-disconnected}.)
\end{df}

In Definition \ref{d-f-l}, $\FF(\Sigma, \gamma) \subset \Fcirc(\Sigma, \gamma)$, so $\LL(\Sigma, \gamma) \leq \Lcirc(\Sigma, \gamma)$. Nonetheless,
 $\LL(\Sigma,\gamma)= \Lcirc(\Sigma, \gamma)$ by \cite[Proposition 5.1]{M-M} and Proposition \ref{p-equality-l-ll}. It was shown in \cite[Theorem 1.3]{M-M} (see also \cite{Lu-isometric-embedding}) that $\LL(\Sigma, \gamma) < \infty$ for finite unions of topological spheres. By convention, $\LL(\Sigma, \gamma) := -\infty$ when $\FF(\Sigma, \gamma) = \emptyset$; likewise for $\Lcirc$, $\Fcirc$. It was observed in \cite[Remark 1.5]{M-M} that $\FF(\Sigma, \gamma) \neq \emptyset$ if $\Sigma$ consists of spheres satisfying the spectral condition $\lambda_1(-\Delta_\gamma + K_\gamma) > 0$, where $K_\gamma$ denotes the Gauss curvature.

Our first main theorem is the following. All asymptotically flat manifolds in our work will be asymptotically flat of order $\tau > \tfrac12$ in the sense of Definition \ref{d-AF}. Recall Remark \ref{r-mean-curvature} for mean curvature conventions in asymptotically flat manifolds.

\begin{thm} \label{t-LC-1}
	Let $(M^3, g)$ be complete, asymptotically flat, with compact, orientable boundary $\Sigma$ with induced metric $\gamma$ and with nonnegative scalar curvature. Let $\phi$ be the boundary capacity potential for $\CgSM$. Suppose that:
	\begin{equation} \label{e-LC-1-boundary-assumption}
		H_{g} < 4 |\nabla \phi| \text{ on } \Sigma,
	\end{equation}
	where $H_{g}$ is the mean curvature of $\Sigma$. Then $\FF(\Sigma, \gamma) \neq \emptyset$ and:
	\begin{enumerate}
		\item We have the inequality
		\begin{equation} \label{e-LC-1}
			2 \, \CgSM \le \LL(\Sigma,\gamma) + \frac1{8\pi} \int_\Sigma H_{g}\,d\sigma.
		\end{equation}
		\item If equality holds in \eqref{e-LC-1}, then $\Sigma$ is connected, $(M,g)$ is scalar flat, and $(M,g)$ is conformally equivalent to a mean-convex handlebody whose boundary is isometric to $(\Sigma, \gamma)$ and whose interior metric is flat. (Recall: handlebodies are compact manifolds diffeomorphic to the boundary connect sum of finitely many solid tori ($B^2 \times \mathbb{S}^1$), or $B^3$. See \cite[Proposition 1]{MeeksSimonYau82}.)
		\item If $\Sigma$ consists of topological spheres (e.g., $M$ is simply connected) with constant nonnegative mean curvature, then equality holds in \eqref{e-LC-1} if and only if $(M,g)$ is isometric to the exterior of a rotationally symmetric sphere in a Riemannian Schwarzschild manifold.
	\end{enumerate}
\end{thm}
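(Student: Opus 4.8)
The plan is to recast $(M,g)$, via a conformal one-point compactification, as a compact mean-convex fill-in of $(\Sigma,\gamma)$ --- with one interior singular point in general --- and then read off \eqref{e-LC-1} and its rigidity from the definition of $\LL$ (Definition \ref{d-ff-ll}) together with the structure theory of optimal fill-ins. Concretely, I would set $\hat M := M \cup \{\infty\}$ and $\hat g := \phi^4 g$. Since $\Delta_g \phi = 0$, the conformal law for scalar curvature gives $R(\hat g) = \phi^{-4}R(g) \ge 0$ on $M$; since $\phi \equiv 1$ on $\Sigma$, the $\hat g$-induced metric on $\Sigma$ is still $\gamma$; and, recalling that for $H_g$ the relevant normal points to the asymptotically flat end --- hence into $\hat M$ --- and using the conformal law for mean curvature together with $\phi\equiv 1$ and $|\nabla\phi| = \tfrac{\p}{\p\eta}\phi$ on $\Sigma$ (Lemma \ref{l-phi}), the mean curvature of $\Sigma$ with respect to the normal $\eta$ pointing out of $\hat M$ is
\[
	H_{\hat g} \;=\; 4\,|\nabla\phi| - H_g \;>\; 0 \quad\text{on }\Sigma,
\]
by hypothesis \eqref{e-LC-1-boundary-assumption}; equivalently, the mean curvature vector of $\p\hat M$ points strictly into $\hat M$. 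Near $\infty$, the expansion $\phi = \CgSM\,|x|^{-1}+O(|x|^{-1-\tau})$ and the order-$\tau$ asymptotic flatness of $g$ show that $\hat g$ extends across $\infty$ to a continuous metric with an isolated singularity of vanishing mass of the type handled in Section \ref{s-isolated-singularities}; the approximation results there and in Section \ref{s-singular} let one $C^0$-approximate $(\hat M,\hat g)$ by genuine smooth fill-ins with the same boundary data, so $\FF(\Sigma,\gamma)\neq\emptyset$ and $\Lsing(\Sigma,\gamma) = \LL(\Sigma,\gamma)$.

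\emph{Proof of (1).} Using $(\hat M,\hat g)$ as a competitor for $\LL(\Sigma,\gamma)$ and the identity \eqref{e-capacity-meaning},
\[
	\LL(\Sigma,\gamma) \;\ge\; \frac1{8\pi}\int_\Sigma H_{\hat g}\,d\sigma \;=\; \frac1{8\pi}\int_\Sigma \big(4\,|\nabla\phi|-H_g\big)\,d\sigma \;=\; 2\,\CgSM - \frac1{8\pi}\int_\Sigma H_g\,d\sigma,
\]
which rearranges to \eqref{e-LC-1}.

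\emph{Proof of (2) and (3).} If equality holds in \eqref{e-LC-1}, then $(\hat M,\hat g)$ realizes the supremum $\LL(\Sigma,\gamma)$, i.e.\ it is an optimal (singular) fill-in. Applying the Mantoulidis--Miao structure theorem for optimal fill-ins \cite{M-M}, extended to the singular and disconnected setting (Appendix \ref{s-f-l-disconnected}, Proposition \ref{p-equality-l-ll}), one gets: the singular point is removable, $\hat M$ is connected, $\hat g$ is scalar-flat, and $(\hat M,\hat g)$ is conformally equivalent to a flat mean-convex handlebody with boundary isometric to $(\Sigma,\gamma)$ --- indeed, the conformal factor is a harmonic function on a compact manifold equal to $1$ on the boundary, hence $\equiv 1$, so $\hat g$ is itself flat. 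Since $R(\hat g)=\phi^{-4}R(g)$, scalar-flatness descends to $(M,g)$; connectedness of $\Sigma$ follows; and composing conformal equivalences gives the assertions in (2). For (3), assume $\Sigma$ consists of topological spheres with constant nonnegative mean curvature. By (2), $\Sigma$ is one topological sphere and the flat handlebody is $B^3$, so $(M,g) \cong (\Omega_0\setminus\{o\},\,\psi^4\delta)$ where $\Omega_0\subset\R^3$ is a mean-convex flat domain, $o$ an interior point, and $\psi := \phi^{-1}$ is the $\delta$-harmonic function with $\psi|_{\p\Omega_0}=1$ and a simple pole at $o$, of the form $\psi = 1 + c\,G_{\Omega_0}(\cdot,o)$ with $c>0$ determined by $\CgSM$; note the compactified metric is then $\hat g=(\phi\psi)^4\delta=\delta$, so the optimal fill-in is literally the flat domain $(\Omega_0,\delta)$ and $\gamma$ its Euclidean boundary metric. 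The constancy of $H_g$ along $\p\Omega_0$ now reads $H_\delta + 4\,\p_\nu\psi = \text{const}$, an overdetermined condition; combined with the explicit form of $\psi$ and the sharp value $\LL(\Sigma,\gamma)=2\,\CgSM-\tfrac1{8\pi}\int_\Sigma H_g\,d\sigma$ (and $\LL(\mS^2_r)=r$ for round spheres), a Serrin-type (moving-planes) rigidity argument forces $\Omega_0$ to be a round ball centered at $o$ and $\psi$ to be rotationally symmetric, which exhibits $(M,g)$ as the exterior of a rotationally symmetric sphere in a Schwarzschild manifold. The converse is a direct computation in the mass-$m$ Schwarzschild manifold: for the exterior $\{r\ge r_0\}$ one has $H_g=\tfrac{2}{r_0}\sqrt{1-2m/r_0}$ constant, $4\,|\nabla\phi|=\tfrac{2}{r_0}\big(1+\sqrt{1-2m/r_0}\big)>H_g$, $\CgSM=\tfrac{r_0}{2}\big(1+\sqrt{1-2m/r_0}\big)$, and $\LL(\gamma)=r_0$, so \eqref{e-LC-1} holds with equality.

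\emph{Main obstacle.} I expect the crux to be twofold: (a) making the singular fill-in from the compactification rigorous --- that an isolated point singularity with $R\ge0$ nearby and nonnegative (here, vanishing) mass can be smoothed without disturbing the boundary data, and that this does not raise $\LL$; this is the ``singular fill-ins'' core of the paper; and (b) the rigidity --- upgrading the Mantoulidis--Miao optimal-fill-in structure theorem to the singular setting, and, in part (3), extracting roundness of $(\Sigma,\gamma)$ and rotational symmetry of the conformal factor from the constant-mean-curvature hypothesis and the equality value of $\LL$; this overdetermined-problem step is where most of the geometry of the equality case will be.
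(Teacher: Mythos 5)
Your overall strategy is the same as the paper's: conformally compactify $(M,g)$ via $\hat g = \phi^4 g$ to produce a fill-in with a single interior singular point, verify $R(\hat g)\ge 0$ and $H_{\hat g}>0$, feed this into the singular-fill-in machinery to get \eqref{e-LC-1}, and then exploit the rigidity of optimal fill-ins plus a function-theoretic characterization of balls for the equality case. Two points deserve flagging.

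First, the regularity you ascribe to the compactified metric at the added point is weaker than what the argument needs. You say $\hat g$ ``extends across $\infty$ to a continuous metric''; the paper's Lemma \ref{l-compactification} upgrades this to $\hat g \in W^{1,p}$ with $p>6$, and this matters. Per Remark \ref{r-isolated-singularities}, $L^\infty$ regularity suffices for the inequality \eqref{e-isolated-singularities-inequality}, but the rigidity step — concluding $\sing \hat g = \emptyset$ and that the fill-in is a flat handlebody — requires $W^{1,p}$ with $p > 3$. Likewise Theorem \ref{t-fsing} requires condition $(\mathbf{C})(\mathbf{b})$, i.e.\ a $W^{1,p}$ bound near the singular set, not just continuity. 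So without something like Lemma \ref{l-compactification} your argument for parts (2) and (3) has a genuine gap. (Your aside in part (2) that ``the conformal factor is a harmonic function \ldots\ hence $\equiv 1$'' is a harmless detour: the rigidity propositions already hand you that $\hat g$ itself is flat, not merely conformally flat.)

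Second, the mechanism you invoke in part (3) is not the one the paper uses, and it is not clear it would work as stated. The boundary condition you extract is $H_{\hat g} + 4\,\p_\nu\psi = -H_g = \text{const}$, which is \emph{not} the classical Serrin overdetermined problem $\p_\nu u = \text{const}$: it carries an extra mean-curvature term, and the domain $\Omega_0$ comes with a distinguished interior pole $o$. The paper instead proves a bespoke result, Theorem \ref{t-isometry}, by a maximum-principle comparison with explicit radial harmonic barriers on the smallest enclosing ball and the largest inscribed geodesic ball; the mean-curvature term at the touching points is then controlled by $H \ge (n-1)/R$ (resp.\ the reverse), and monotonicity of $f(x) = A(2-n)x^{2-n}+bx+c(n-1)$ forces the two radii to coincide. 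If you want to run a moving-planes argument you would need to justify that the $H$-dependent Neumann condition is compatible with the reflection comparison, which is not immediate and is not addressed in the proposal. The converse computation you give in Schwarzschild coordinates is correct and matches the paper's Section \ref{s-schwarzschild-computations}.
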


We note that \eqref{e-LC-1-boundary-assumption} {obviously} allows {any} boundary $\Sigma$ {with $H_g \leq 0$, but it also allows important cases where $\Sigma$ has $H_g > 0$; for} instance, the theorem applies to every coordinate sphere in a Riemannian Schwarzschild manifold {(}see Section \ref{s-schwarzschild-computations}{), as well as to small perturbations.}

Regarding the capacity $\CgSM$ and the boundary mean curvature $H_g$, H. Bray and the second author proved the following:

\begin{thm*}[{\cite[Theorem 1]{BrayMiao07}}]\label{t-BM}
	Let $(M^3, g)$ be complete, asymptotically flat, with nonnegative scalar curvature and connected boundary $\Sigma$. Assume $M$ is diffeomorphic to $\R^3 \setminus \Omega$, where $\Omega$ is a bounded domain with connected boundary. Then
	\begin{equation} \label{e-braymiao-1}
		\CgSM \leq \sqrt{\frac{|\Sigma|}{16\pi}} \left( 1 + \sqrt{\frac{1}{16\pi} \int_\Sigma H_g^2 \, d\sigma} \right),
	\end{equation}
	where $|\Sigma|$ and $H_g$ are the area and mean curvature of $\Sigma$. Furthermore, equality holds if and only if $(M, g)$ is isometric to the exterior region of a rotationally symmetric sphere outside the horizon in a Riemannian Schwarzschild manifold.
\end{thm*}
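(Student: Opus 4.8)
Theorem \ref{t-LC-1}'s inequality \eqref{e-LC-1} is of the wrong shape to give \eqref{e-braymiao-1}: the latter is Penrose/Hawking-mass-like (it involves $\sqrt{|\Sigma|}$ and $\sqrt{\int_\Sigma H_g^2}$), while \eqref{e-LC-1} is linear in the boundary mean curvature. So a genuinely different, quadratic input is needed, and the plan is to extract it from weak inverse mean curvature flow after the conformal recasting used throughout the paper. Let $\phi$ be the capacity potential and put $\widehat g := \phi^4 g$ on $M$. Then $R(\widehat g) = \phi^{-4}R(g)\ge0$; since $\phi\equiv1$ on $\Sigma$, $\widehat g$ induces $\gamma$ on $\Sigma$ so that $|\Sigma|_{\widehat g} = |\Sigma|$, and — as $\nabla_g\phi$ is normal along $\Sigma$ with $\tfrac{\p}{\p\eta}\phi = |\nabla_g\phi|$ there — the mean curvature of $\Sigma$ in $(M,\widehat g)$ is $\widehat H := H_g - 4|\nabla_g\phi|$. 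Using the expansion $\phi = \CgSM |x|^{-1} + O(|x|^{-2})$ at the end and translating the coordinates to remove the dipole term, the inversion $\rho = \CgSM^2 |x|^{-1}$ shows that $\widehat g$ extends across a single added point $p_\infty$ to a metric on the compact manifold $\widehat M := M\cup\{p_\infty\}$ that equals the Euclidean metric to leading order at $p_\infty$; thus $(\widehat M,\widehat g)$ is a compact manifold with boundary $\Sigma$, $R(\widehat g)\ge0$, and small $\widehat g$-geodesic spheres about $p_\infty$ have Hawking mass $\to0$. Since $M\cong\R^3\setminus\Omega$ with $\partial\Omega=\Sigma$ connected, $\widehat M$ is a ball.

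The core estimate is
\[ \int_\Sigma (H_g - 4|\nabla_g\phi|)^2\, d\sigma \;\le\; 16\pi . \]
I would prove it by running the Huisken--Ilmanen weak inverse mean curvature flow in $(\widehat M,\widehat g)$ from small geodesic spheres about $p_\infty$, expanding out toward $\partial\widehat M = \Sigma$. All leaves are connected spheres (as $\widehat M$ is a ball and the flow starts connected), so $\chi\le2$, and $R(\widehat g)\ge0$; hence Geroch monotonicity — which persists across the jumps of the weak flow — forces the Hawking mass $m_H^{\widehat g}$ of the leaves to be nondecreasing, whereas letting the initial sphere collapse to $p_\infty$ sends its Hawking mass to $0$. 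Therefore $m_H^{\widehat g}(\Sigma)\ge0$, and expanding the Hawking mass with $|\Sigma|_{\widehat g}=|\Sigma|$ and $\widehat H^2 = (H_g-4|\nabla_g\phi|)^2$ gives the estimate. It is sharp: a direct computation shows it is an equality on every coordinate sphere of a Riemannian Schwarzschild manifold (cf. Section \ref{s-schwarzschild-computations}).

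Granting this, \eqref{e-braymiao-1} follows by two uses of Cauchy--Schwarz on $\Sigma$; we may assume $\CgSM > \sqrt{|\Sigma|/16\pi}$, as \eqref{e-braymiao-1} is otherwise immediate. From the core estimate and $\int_\Sigma H_g\,|\nabla_g\phi|\,d\sigma\le\|H_g\|_{L^2(\Sigma)}\|\nabla_g\phi\|_{L^2(\Sigma)}$,
\[ \big(\|H_g\|_{L^2(\Sigma)} - 4\|\nabla_g\phi\|_{L^2(\Sigma)}\big)^2 \le \int_\Sigma (H_g - 4|\nabla_g\phi|)^2\,d\sigma \le 16\pi , \]
so $\|H_g\|_{L^2(\Sigma)}\ge 4\|\nabla_g\phi\|_{L^2(\Sigma)} - 4\sqrt\pi$; and, by Cauchy--Schwarz again and Definition \ref{d-capacity},
\[ \|\nabla_g\phi\|_{L^2(\Sigma)} \ge |\Sigma|^{-1/2}\!\int_\Sigma |\nabla_g\phi|\,d\sigma = |\Sigma|^{-1/2}\!\int_\Sigma \tfrac{\p}{\p\eta}\phi\,d\sigma = 4\pi|\Sigma|^{-1/2}\,\CgSM . \]
Hence $\|H_g\|_{L^2(\Sigma)}\ge 16\pi|\Sigma|^{-1/2}\CgSM - 4\sqrt\pi = 4\sqrt\pi\big(\CgSM/\sqrt{|\Sigma|/16\pi}-1\big)\ge0$, and squaring and rearranging yields \eqref{e-braymiao-1}. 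For rigidity, equality forces equality throughout: by Geroch rigidity $(\widehat M,\widehat g)$ is a flat metric ball centered at $p_\infty$, $|\nabla_g\phi|$ is constant on $\Sigma$, and $H_g$ is a nonnegative constant multiple of it; undoing $g = \phi^{-4}\widehat g$, with $\phi^{-1}$ an $\widehat g$-harmonic function of $\widehat g$-distance to $p_\infty$, identifies $(M,g)$ with the exterior of a centered coordinate sphere in a Riemannian Schwarzschild manifold.

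The main obstacle is the weak IMCF step. A flow starting at an interior point and ending at a boundary component of a compact manifold is slightly outside the standard Huisken--Ilmanen framework, and one must check that its limit leaf is genuinely $\Sigma$ rather than a proper interior outer-minimizing hull — this is exactly where the hypothesis $M\cong\R^3\setminus\Omega$ with connected boundary enters, to keep the leaves connected spheres and to control the hulls. (The mild loss of regularity of $\widehat g$ at $p_\infty$ — only H\"older when the asymptotic order $\tau$ is near $\tfrac12$ — is harmless, since the flow from a small sphere $S_\epsilon(p_\infty)$ runs in the smooth region $\widehat M\setminus B_\epsilon(p_\infty)$ and the Hawking mass of $S_\epsilon$ still tends to $0$ using the Euclidean asymptotics.) The remaining steps are routine.
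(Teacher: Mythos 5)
The paper does not prove this statement---it quotes it from \cite{BrayMiao07}---so the comparison is against Bray--Miao's own argument, whose outline the paper itself points to (its remark (ii) after the theorem credits weak inverse mean curvature flow). Your high-level route is indeed theirs: conformally blow down via $\widehat g=\phi^4 g$ to a compact manifold with boundary $\Sigma$ and a distinguished interior point $p_\infty$, establish $\int_\Sigma \widehat H^2\,d\sigma\le 16\pi$ (equivalently $m_H^{\widehat g}(\Sigma)\ge 0$), and then deduce \eqref{e-braymiao-1} by two applications of Cauchy--Schwarz. Your Cauchy--Schwarz reduction is correct (reverse triangle inequality in $L^2(\Sigma)$, then $\|\nabla_g\phi\|_{L^2(\Sigma)}\ge |\Sigma|^{-1/2}\!\int_\Sigma \partial_\eta\phi\,d\sigma = 4\pi|\Sigma|^{-1/2}\CgSM$), and your check that it is sharp on Schwarzschild coordinate spheres is also correct: there $H_{g_m}-4|\nabla\phi| \equiv -2/r_A(\Sigma)$, so both the core estimate and the Cauchy--Schwarz uses are equalities.

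The genuine gap is in the core estimate itself, which you assert rather than prove. Running Huisken--Ilmanen weak IMCF from small spheres about an interior point of a \emph{compact manifold with boundary} and concluding that the Geroch-monotone Hawking mass passes to $m_H^{\widehat g}(\Sigma)$ in the limit is exactly the technical heart of Bray--Miao, and your paragraph flagging it as ``slightly outside the standard framework'' does not substitute for an argument. In particular, nothing you write rules out the weak flow spending all of its ``time'' inside a proper inner minimizing hull of $\Sigma$, nor shows that the Hawking masses of the leaves $\partial\{u<t\}$ converge to that of $\Sigma$ when $\Sigma$ itself is not a minimizing hull --- this is not automatic and requires the specific boundary-value set-up and estimates in \cite{BrayMiao07}. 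Two smaller issues: the assertion that $\widehat M$ is a ball is wrong if $\partial\Omega$ has positive genus (the one-point compactification of $\R^3\setminus\Omega$ is then a handlebody or worse); this does not hurt Geroch monotonicity, since $\chi\le 2$ for any closed connected orientable leaf, but it does invalidate the literal leaf-connectivity argument you give. And the rigidity discussion is only a sketch: Geroch rigidity along the weak flow, together with equality in both Cauchy--Schwarz steps (giving $|\nabla_g\phi|$ constant and $H_g$ a nonnegative constant multiple of it), does plausibly force the Schwarzschild conclusion, but as written it is not a proof, and a statement like ``$(\widehat M,\widehat g)$ is a flat metric ball centered at $p_\infty$'' requires identifying the level sets of $u$ as round spheres about $p_\infty$, which needs more care than ``equality forces equality throughout.''
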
	
If $\Sigma$ is minimal, then the right side of \eqref{e-braymiao-1} reduces to the Hawking mass of $\Sigma$. It is worth comparing \eqref{e-braymiao-1} and  \eqref{e-LC-1}:
\begin{enumerate}
	\item[(i)] The inequality in \eqref{e-LC-1} becomes an identity on every coordinate sphere
	  in the (doubled) Riemannian Schwarzschild manifold,
	  while \eqref{e-braymiao-1} becomes an identity only on spheres that are in the exterior region of the Riemannian Schwarzschild manifold.
	\item[(ii)] In  \eqref{e-braymiao-1}, it is assumed that $\Sigma$ is connected because the proof   in \cite{BrayMiao07} made use of weak solutions to the inverse mean curvature flow developed by Huisken--Ilmanen \cite{HuiskenIlmanen01}. However, Theorem \ref{t-LC-1} imposes no topological assumptions on $(M, g)$ and	$\Sigma$ is allowed to be disconnected.
	\item[(iii)] In the case that both estimates \eqref{e-LC-1} and \eqref{e-braymiao-1} apply, \eqref{e-LC-1} is a better estimate when $ \Sigma $ is a round sphere and $ H_g $ is not a constant, by H\"{o}lder's inequality. On the other hand, if $H_g=0$ and $\Sigma$ has positive Gauss curvature, \eqref{e-braymiao-1} gives better estimate by the Minkowski inequality in $ \R^3$.
\end{enumerate}

Theorem \ref{t-LC-1} has a corollary that relates to Szeg\"o's theorem, which we recall here (see, e.g., \cite[section 3.4]{PolyaSzego51}): if $\Sigma \subset \R^3$ is a closed surface bounding a convex domain $\Omega$, then
	\begin{equation} \label{e-cap-upper-r3}
		\mathcal{C}_{g_0}(\Sigma, \R^3 \setminus \Omega) \leq \frac{1}{8\pi} \int_\Sigma H_{g_0} \, d \sigma,
	\end{equation}
	with equality if and only if $\Sigma$ is a round sphere. We obtain:

\begin{cor} \label{c-LC-1-vs-szego}
	Let $(M^3, g)$ be complete, asymptotically flat, with nonnegative scalar curvature, and no boundary. Suppose $\Omega \subset M$ is a bounded domain with boundary $\Sigma$ consisting of spheres with positive Gauss curvature, and mean curvature satisfying $0 < H_g < 4 |\nabla \phi|$, with $\phi$ as in \eqref{e-capacity-pde}. Then:
	\begin{equation} \label{e-cap-upper-L-Gauss}
		\mathcal{C}_{g}(\Sigma, M \setminus \Omega) \leq \frac{1}{8\pi} \int_\Sigma H_{g_0} \, d \sigma,
	\end{equation}
	where $H_{g_0}$ is the mean curvature of the isometric embedding of each component of $\Sigma$ in $(\R^3, g_0)$. Equality holds in \eqref{e-cap-upper-L-Gauss} if and only if $\Sigma$ is a round sphere and $(M, g)$ is isometric to $(\R^3, g_0)$.
\end{cor}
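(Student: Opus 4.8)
The plan is to apply Theorem \ref{t-LC-1} to the asymptotically flat manifold with boundary $(M \setminus \Omega, g)$: its boundary is $\Sigma = \partial \Omega$ with induced metric $\gamma$, its scalar curvature is nonnegative, and by Remark \ref{r-mean-curvature} its boundary mean curvature is $H_g$ taken with respect to the normal pointing to the distinguished end, which here points out of $\Omega$; hence the assumption $0 < H_g < 4|\nabla \phi|$ supplies exactly condition \eqref{e-LC-1-boundary-assumption}. Theorem \ref{t-LC-1} then gives $\FF(\Sigma, \gamma) \neq \emptyset$ and
\[
  2\, \mathcal{C}_g(\Sigma, M \setminus \Omega) \;\le\; \LL(\Sigma, \gamma) + \frac{1}{8\pi} \int_\Sigma H_g \, d\sigma .
\]
I would bound the right side in two ways. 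First, since each component of $\Sigma$ is a sphere of positive Gauss curvature, the positivity of the Brown--York mass (Shi--Tam, together with its extension to manifolds with several boundary components; cf.\ Appendix \ref{s-f-l-disconnected}, which also underlies Corollary \ref{c-capacity-Gauss}) gives $\LL(\Sigma, \gamma) \le \tfrac{1}{8\pi}\int_\Sigma H_{g_0}\, d\sigma$. Second, the compact region $\overline{\Omega}$ itself lies in $\FF(\Sigma, \gamma)$ --- it is connected, its boundary is isometric to $(\Sigma, \gamma)$, $R(g)\ge 0$, and its boundary mean curvature vector points strictly inward because $H_g > 0$ with respect to the outward normal --- so $\tfrac{1}{8\pi}\int_\Sigma H_g\, d\sigma \le \LL(\Sigma,\gamma) \le \tfrac{1}{8\pi}\int_\Sigma H_{g_0}\, d\sigma$. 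Substituting both bounds gives $2\,\mathcal{C}_g(\Sigma, M\setminus\Omega) \le \tfrac{1}{4\pi}\int_\Sigma H_{g_0}\, d\sigma$, which is \eqref{e-cap-upper-L-Gauss}.

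For the equality case, equality in \eqref{e-cap-upper-L-Gauss} forces equality at each step: equality in \eqref{e-LC-1} and $\tfrac{1}{8\pi}\int_\Sigma H_g\, d\sigma = \tfrac{1}{8\pi}\int_\Sigma H_{g_0}\, d\sigma$. The second identity says the Brown--York mass of $(\overline{\Omega}, g)$ vanishes, so --- using that $\Sigma$ is connected by Theorem \ref{t-LC-1}(2) --- Shi--Tam rigidity makes $(\overline{\Omega}, g)$ isometric to the convex body $\Omega_0 \subset \R^3$ bounded by the isometric image of $(\Sigma,\gamma)$, and in particular $H_g = H_{g_0}$ pointwise on $\Sigma$. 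Separately, equality in \eqref{e-LC-1} with Theorem \ref{t-LC-1}(2) makes $(M\setminus\Omega, g)$ scalar flat and conformally equivalent to a flat, mean-convex handlebody with boundary isometric to $(\Sigma,\gamma)$; since $\Sigma$ is a sphere this handlebody is simply connected, so it develops isometrically onto the convex body $\Omega_0$ (Hadamard together with Nirenberg--Pogorelov uniqueness). Thus $(M\setminus\Omega, g) = (\Omega_0\setminus\{p\},\, u^4 g_0)$, where $p$ is the point compactifying the end and $u>0$ is $g_0$-harmonic on $\Omega_0\setminus\{p\}$ with a simple pole at $p$ and $u\equiv 1$ on $\partial\Omega_0$; moreover one checks $\phi = 1/u$. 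The conformal change-of-mean-curvature formula, with $u\equiv1$ on $\partial\Omega_0$, turns the pointwise identity $H_g = H_{g_0}$ into the additional Neumann condition $\partial_{\nu_0}u = -\tfrac12 H_{g_0}$ on $\partial\Omega_0$, with $\nu_0$ the $g_0$-outward unit normal.

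The final step --- which I expect to be the main obstacle --- is to deduce $(M,g)\cong(\R^3, g_0)$ from this overdetermined problem ($u$ harmonic off the simple pole at $p$, with $u\equiv1$ and $\partial_{\nu_0}u = -\tfrac12 H_{g_0}$ on $\partial\Omega_0$). I would argue either by a Serrin-type moving-plane/Reilly argument showing directly that $\Omega_0$ is a round ball with $p$ its center, or by proving $\madm(M,g)=0$: note $(M,g)$ is complete, scalar flat and asymptotically flat; writing $u = a/r + b + o(1)$ in the $g_0$-distance $r$ to $p$, the divergence theorem at $p$ gives $a = \tfrac{1}{8\pi}\int_\Sigma H_{g_0}\, d\sigma$ while $\madm(M,g) = 2ab$, and a Pohozaev/Reilly identity for $u$ on $\Omega_0\setminus\{p\}$ combined with the boundary data $u\equiv1$, $\partial_{\nu_0}u = -\tfrac12 H_{g_0}$ and an integral inequality in $\R^3$ (Minkowski or Heintze--Karcher) should force $b=0$; then the rigidity case of the Riemannian positive mass theorem finishes. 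Once $(M,g)\cong(\R^3,g_0)$, $\Sigma$ is a convex surface in $\R^3$ at which equality holds in Szeg\"o's inequality \eqref{e-cap-upper-r3}, hence a round sphere. Conversely, any round sphere $\partial B_\rho \subset (\R^3,g_0)$ satisfies $0 < H_{g_0} = 2/\rho < 4/\rho = 4|\nabla\phi|$ and makes both sides of \eqref{e-cap-upper-L-Gauss} equal to $\rho$ (Section \ref{s-schwarzschild-computations}), so equality holds. Everything else is routine bookkeeping with mean-curvature conventions and the established properties of $\FF$, $\LL$, and the Brown--York mass.
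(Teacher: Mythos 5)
Your derivation of the inequality follows the paper's own route: apply Theorem \ref{t-LC-1} to $(M \setminus \Omega, g)$, note $\overline{\Omega} \in \FF(\Sigma,\gamma)$ so $\tfrac{1}{8\pi}\int_\Sigma H_g\,d\sigma \le \LL(\Sigma,\gamma)$, and use \eqref{e-ll-h-gauss} (Shi--Tam) to close. No issues there, and the explicit verification of the converse direction (round sphere $\Rightarrow$ equality) is a nice addition.

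For the equality case, you correctly extract the two rigidity consequences, so both the fill-in $(\Omega, g)$ and the blowdown $(\Omega, \widehat g)$ are flat, develop onto the same convex body $\Omega_0 \subset \R^3$ by Cohn--Vossen uniqueness, and hence $H_g = H_{\widehat g} = H_{g_0}$ on $\Sigma$ --- all in agreement with the paper. Where you diverge, and where the argument has a genuine gap, is the final step: reducing the overdetermined Green's-function problem on $\Omega_0 \setminus \{p\}$ (namely $\psi = \phi^{-1}-1$ harmonic with simple pole at $p$, $\psi|_{\partial\Omega_0}=0$, and the Neumann condition $\partial_\nu\psi + \tfrac12 H_{\widehat g} = 0$ coming from $H_g = H_{\widehat g}$ inserted into \eqref{e-LC-1-blowdown-H}) to the conclusion that $\Omega_0$ is a round ball centered at $p$. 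You flag this as the main obstacle and sketch two candidate routes --- a Serrin/moving-plane argument, or a Pohozaev/Reilly identity forcing $b=0$ in $u = a/r + b + o(1)$ followed by the positive mass theorem --- but carry out neither, and the claim that $b=0$ follows from such an identity is asserted rather than proven. The paper closes this gap with Theorem \ref{t-isometry}, which is exactly the statement you need: a self-contained characterization of Euclidean balls via a harmonic Green's function with an overdetermined Robin-type boundary condition $\partial_\nu\psi + cH = -b$, applied here with $c = \tfrac12$, $b = 0$, $A = \CgSM$. You should invoke Theorem \ref{t-isometry} directly (it is proven in Section \ref{s-isometry} by an elementary touching-ball comparison) rather than reconstruct a replacement; both the blowdown and, by flatness and Cohn--Vossen, the fill-in $(\Omega, g)$ then become geodesic balls of radius $\CgSM$, and gluing yields $(M,g) \cong (\R^3,g_0)$.
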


This corollary essentially replaces $\R^3$ in Szeg\"o's theorem with an asymptotically flat manifold, and the convexity assumption on $\Sigma$ is replaced by a mean curvature condition. Without the positive Gauss curvature assumption on $\Sigma$, one still has $\mathcal{C}_g(\Sigma, M \setminus \Omega) \leq \LL(\Sigma, \gamma)$, with equality corresponding to the rigidity case of Theorem \ref{t-LC-1} and the rigidity of $\frac{1}{8\pi} \int_\Sigma H_g \, d\sigma = \LL(\Sigma, \gamma)$ \cite[Theorem 1.4]{M-M}.

Corollaries \ref{c-capacity-Gauss} and \ref{c-LC-1-vs-szego} are a special case of Theorem \ref{t-LC-1}, in view of the following result of  Y.-G. Shi and the third author \cite{ShiTam02}, which proves that
\begin{equation} \label{e-ll-h-gauss}
	\LL(\Sigma, \gamma) = \frac{1}{8\pi} \int_\Sigma H_{g_0} \, d\sigma
\end{equation}
for spheres $(\Sigma, \gamma)$ of positive Gauss curvature whose isometric embedding into $(\R^3, g_0)$ has mean curvature $H_{g_0}$:

\begin{thm*}[{\cite[Theorem 1]{ShiTam02}}]
	Let $(\Omega^3, g)$ be a compact, connected Riemannian manifold with nonnegative scalar curvature, and with compact mean-convex boundary $\Sigma$, which consists of spheres with positive Gauss curvature. Then,
	\begin{equation} \label{e-shi-tam}
		\int_{\Sigma_\ell} H_g \, d\sigma \leq \int_{\Sigma_\ell} H_{g_0} \, d\sigma
	\end{equation}
	for each component $\Sigma_\ell \subset \Sigma$, $\ell = 1, \ldots, k$. Moreover, equality holds for some $\ell = 1, \ldots, k$ if and only if $\p \Omega$ has only one component and $(\Omega, g)$ is isometric to a domain in $\R^3$.
\end{thm*}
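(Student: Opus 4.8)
\emph{Overall strategy.} Since the claimed inequality is additive over the components of $\Sigma$, I would fix a single component $\Sigma_\ell$. The plan is: (1) use the Weyl embedding theorem \cite{Nirenberg53, Pogorelov64} to realize each component $(\Sigma_j, \gamma_j)$ as a strictly convex sphere $\wh\Sigma_j \subset (\R^3, g_0)$, with mean curvature $H_{g_0} > 0$; (2) build, on the exterior region $E_j$ of $\wh\Sigma_j$, a complete, asymptotically flat, scalar-flat metric $\bar g_j$ whose boundary is isometric to $(\Sigma_j, \gamma_j)$ and has the \emph{same} (pointwise) mean curvature as $\Sigma_j \subset (\Omega, g)$; (3) glue $(\Omega, g)$ to all the $(E_j, \bar g_j)$ to form a complete, asymptotically flat manifold $(N, h)$ with $k$ ends and nonnegative distributional scalar curvature; and (4) extract \eqref{e-shi-tam} from a monotonicity formula along a natural foliation of $E_\ell$ together with the positive mass theorem.

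\emph{The quasi-spherical collar.} I would foliate $E_j$ by the outward parallel surfaces $S_r = \{ x : \dist(x, \wh\Omega_j) = r \}$ of the convex body $\wh\Omega_j$ bounded by $\wh\Sigma_j$ (smooth and strictly convex for all $r \ge 0$), writing $g_0 = dr^2 + \sigma_r$ with $\sigma_r$ the induced metric on $S_r$, and look for $\bar g_j = u^2\, dr^2 + \sigma_r$ with $u = u(r, \cdot) > 0$. A computation with Bartnik's quasi-spherical ansatz shows that $R(\bar g_j) = 0$ is equivalent to the quasilinear parabolic equation
\[
	\p_r u = \frac{u^2}{H_0}\, \Delta_{\sigma_r} u + \frac{R_{\sigma_r}}{2 H_0}\, (u - u^3),
\]
where $H_0 = H_0(r, \cdot) > 0$ is the mean curvature of $S_r$ in $\R^3$ and $R_{\sigma_r} = 2 K_{\sigma_r} > 0$ is the scalar curvature of $(S_r, \sigma_r)$; here one uses the flat identity $R(g_0) = 0$ in the form $\p_r H_0 + H_0^2 = R_{\sigma_r}$. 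I would solve this with initial data $u(0, \cdot) = H_0/H_g$ --- positive precisely because $\Sigma$ is mean-convex --- so that the boundary mean curvature of $(E_j, \bar g_j)$ (pointing into $E_j$) equals $H_0/u(0,\cdot) = H_g$ pointwise. Since $u \equiv 1$ is an equilibrium toward which the reaction term pushes $u$, the constants $\min\{1, \min u(0,\cdot)\}$ and $\max\{1, \max u(0,\cdot)\}$ are sub/supersolutions, giving long-time existence with $0 < c \le u \le C$; a finer asymptotic analysis ($S_r$ is asymptotically round as $r \to \infty$, and the equation approaches Bartnik's) would show $\bar g_j$ is asymptotically flat with
\[
	\madm(E_j, \bar g_j) = \lim_{r \to \infty} \frac{1}{8\pi} \int_{S_r} \Big( H_0 - \frac{H_0}{u} \Big)\, d\sigma_r .
\]

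\emph{Gluing, monotonicity, and the inequality.} Set $\m_j(r) := \frac{1}{8\pi} \int_{S_r} (H_0 - H_0/u)\, d\sigma_r$. Using $\p_r(d\sigma_r) = H_0\, d\sigma_r$, the equation for $u$, $\int_{S_r} \Delta_{\sigma_r} u = 0$, and $\p_r H_0 + H_0^2 = R_{\sigma_r}$, one computes
\[
	8\pi\, \m_j'(r) = \int_{S_r} R_{\sigma_r} \Big( 1 - \tfrac12 (u + u^{-1}) \Big)\, d\sigma_r = - \int_{S_r} \frac{R_{\sigma_r}}{2}\, \frac{(u-1)^2}{u}\, d\sigma_r \le 0 ,
\]
so $\m_j$ is nonincreasing, with $\m_j'(r) = 0$ forcing $u \equiv 1$ on $S_r$ (since $R_{\sigma_r} > 0$). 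Gluing $(\Omega, g)$ to the $(E_j, \bar g_j)$ along $\Sigma_j \cong \wh\Sigma_j$: the induced metrics agree and the mean curvatures agree, so $h$ on $N = \Omega \cup \bigcup_j E_j$ is Lipschitz, smooth away from the $\Sigma_j$, with $R(h) \ge 0$ distributionally and no singular part along the $\Sigma_j$; a mollification argument and the positive mass theorem, applied to each end of $(N, h)$, give $\madm(E_j, \bar g_j) \ge 0$. Hence
\[
	\frac{1}{8\pi} \int_{\Sigma_\ell} (H_{g_0} - H_g)\, d\sigma = \m_\ell(0) \ge \lim_{r \to \infty} \m_\ell(r) = \madm(E_\ell, \bar g_\ell) \ge 0 ,
\]
which is \eqref{e-shi-tam}.

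\emph{Rigidity, and the main obstacle.} If equality holds for some $\ell$, then $\m_\ell(0) = 0$, so $\m_\ell \equiv 0$ and $\madm(E_\ell, \bar g_\ell) = 0$; the monotonicity integrand then forces $u \equiv 1$ on all of $E_\ell$, whence $(E_\ell, \bar g_\ell)$ is isometric to the flat exterior $\R^3 \setminus \wh\Omega_\ell$ and, at $r = 0$, $u(0,\cdot) = H_{g_0}/H_g \equiv 1$, i.e.\ $H_g \equiv H_{g_0}$ on $\Sigma_\ell$. Thus $h$ is smooth across $\Sigma_\ell$ and has zero ADM mass, so the rigidity case of the positive mass theorem (with corners, which are then removable) gives $(N, h) \cong (\R^3, g_0)$; since $\R^3$ has a single end, $k = 1$ (so $\p\Omega$ is connected) and $(\Omega, g)$ is isometric to the convex domain $\wh\Omega_\ell \subset \R^3$. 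The reverse implication is immediate. I expect the main difficulties to be (i) the long-time existence, positivity, and --- above all --- the precise asymptotics of the quasi-spherical solution $u$, which are what identify $\lim_{r \to \infty} \m_j(r)$ with the ADM mass; and (ii) legitimizing the positive mass theorem across the Lipschitz gluing, i.e.\ the Shi--Tam/Miao corner argument, though the pointwise matching of mean curvatures simplifies this last point considerably.
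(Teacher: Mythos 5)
This statement is cited from Shi--Tam \cite{ShiTam02} rather than proved in the paper, and your proposal is a faithful sketch of the original Shi--Tam proof: Weyl embedding, Bartnik's quasi-spherical metric on the exterior with initial data $u(0,\cdot)=H_0/H_g$, the monotonicity of $\m_j(r)=\tfrac{1}{8\pi}\int_{S_r}(H_0-H_0/u)\,d\sigma_r$, identification of $\lim_{r\to\infty}\m_j(r)$ with the ADM mass, the Lipschitz gluing with matching mean curvatures, and the positive mass theorem with corners for both the inequality and the rigidity. The two technical points you flag --- the asymptotics of the quasi-spherical solution and the corner PMT --- are indeed exactly where the hard work lies in \cite{ShiTam02}, so your assessment of the proof's structure and difficulties is accurate.
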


Note that, by a slight abuse of notation, the right hand side of \eqref{e-capacity-spheres-Gauss} in Corollary \ref{c-capacity-Gauss} equals one-half times the \emph{Brown--York quasi-local mass} \cite{BrownYork1, BrownYork2} (another  important notion of mass that we use in our paper) evaluated \emph{on} the horizon boundary. Recall its definition:

\begin{df}[Brown--York mass] \label{d-mby}
	Let $(\Omega^3, g)$ be a compact, connected  Riemannian manifold with compact boundary $\Sigma$ that consists of spheres with positive Gauss curvature. The Brown--York mass of $(\Sigma;\Omega,  g)$ is defined to be:
	\begin{equation} \label{e-mby}
		\mby(\Sigma;\Omega, g) := \frac{1}{8\pi}   \int_{\Sigma} (H_{g_0}  - H_g) \, d\sigma,
	\end{equation}
	where $d\sigma$ is the area form of the metric $\gamma$ induced on $\Sigma$, $H_g$ is the mean curvature of $\Sigma \subset (\Omega, g)$ computed with respect to the unit normal pointing outside of $\Omega$, and $H_{g_0}$ is the mean curvature of the isometric embedding of each component of $(\Sigma , \gamma)$ into $(\R^3, g_0)$.
\end{df}

Given the {aforementioned} results, one wonders whether the capacity and the various notions of mass (ADM, Hawking, Brown--York) fit into one picture. We recall: An inequality between Brown--York mass and Hawking mass has been established {in} \cite{ShiTam07,Miao09}. (See also \cite{M-M-CMSA}).
On the other hand, if $\Sigma$ are large coordinate spheres bounding compact domains in asymptotically flat manifolds, the Brown--York mass will converge to the ADM mass by \cite{FanShiTam09}. 	
{On the other hand, there} can be no a priori inequality between the Brown--York mass of $\Sigma$ and ADM mass. This can be seen as a consequence of the work of R. Schoen and the first author \cite{MS15} on the \emph{Bartnik quasi-local mass}; see Section \ref{s-examples-inequality-1}.

{Thus, while there is no a priori relation between the right hand sides of \eqref{e-m-cap}, \eqref{e-capacity-spheres-Gauss}, %Theorem \ref{t-LC-2} serves as a
our second main theorem in this paper succeeds in giving a ``localization'' of Bray's capacity bound, \eqref{e-m-cap}, thus relating boundary capacity to Brown--York mass of far outlying equipotential spheres. For simplicity, we first present this localization, which is of independent interest, in the form of a corollary to the main theorem:

\begin{cor} \label{c-localization-bray}
	Let $(M^3,g)$ be complete, asymptotically flat, orientable, with nonnegative scalar curvature, and compact boundary $\Sigma$ with nonpositive mean curvature. Let $\phi$ be the boundary capacity potential in \eqref{e-capacity-pde}, and
	\[ u := \tfrac12 (2-\phi), \; \Sigma_c := \{ u = c \}. \]
	There exists $c_0 \in (\tfrac12, 1)$ so that, for all $c \in [c_0, 1)$, $\Sigma_c$ is an embedded sphere with positive Gauss and mean curvatures. If $\Omega_c$ is the \emph{compact} domain bounded by $\Sigma$ and $\Sigma_c$, then
	\begin{equation} \label{e-localization-bray}
		c^{-1} \, \CgSM \leq \mby(\Sigma_c; \Omega_c, g)
	\end{equation}
	for any $c \in [c_0, 1)$. Equality holds for some $c$ if and only if $(M, g)$ is isometric to the exterior region of a Riemannian Schwarzschild with horizon $\Sigma$, in which case equality holds for all $c$.
\end{cor}

The right hand side of \eqref{e-localization-bray} is the Brown--York mass of $\Sigma_c$ and, by \cite{ShiWangWu09,FanShiTam09}, it converges to $\madm(M, g)$ as $c \to 1$. This recovers Bray's capacity inequality \eqref{e-m-cap}.

Our second theorem, which implies Corollary \ref{c-localization-bray}, is:
}

%Our second theorem relates capacity and Brown--York mass:

\begin{thm} \label{t-LC-2}
	Let $(M^3,g)$ be complete, asymptotically flat, orientable, with nonnegative scalar curvature, and compact boundary $\Sigma$ with nonpositive mean curvature. Let $\phi$ be the boundary capacity potential in \eqref{e-capacity-pde}, and
	\[ u := \tfrac12 (2-\phi), \; \Sigma_c := \{ u = c \}. \]
	Let $c \in (\tfrac12, 1)$ be a regular value of $u$ so that:
	\begin{equation} \label{e-LC-2-boundary-assumption}
		H_{g} > - 4 |\nabla \log u| \text{ on } \Sigma_c,
	\end{equation}
	where $H_{g}$ is the mean curvature of $\Sigma_c$. Then
	\begin{equation} \label{e-LC-2}
		c^{-1} \, \CgSM \leq \LL(\Sigma_c, \gamma_c) - \frac{1}{8\pi} \int_{\Sigma_c} H_{g} \, d\sigma_c;
	\end{equation}
	here, $\sigma_c$ is the area form of the metric $\gamma_c$ induced on $\Sigma_c$.
	
	If $\Sigma_c$ is a sphere with positive Gauss and mean curvatures, then equality holds in \eqref{e-LC-2} if and only if $(M, g)$ is isometric to the exterior region of a Riemannian Schwarzschild manifold with horizon $\Sigma$.
\end{thm}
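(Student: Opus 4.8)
The plan is to recast the estimate, as the authors indicate, as a (possibly singular) mean‑convex fill‑in problem for the level set $\Sigma_c$, with the conformal deformation built out of the harmonic function $u$. Let $\Omega_c := \{u \le c\}$, the compact manifold‑with‑boundary whose boundary is $\Sigma \sqcup \Sigma_c$; it is compact because $u \equiv \tfrac12$ on $\Sigma$, $u \to 1$ at infinity and $c<1$, and smooth because $c$ is a regular value. On $\Omega_c$ put $\hat g := (u/c)^4 g$. Since $u$ is harmonic and positive, $R(\hat g) = (c/u)^4 R(g) \ge 0$, and since $u/c \equiv 1$ on $\Sigma_c$, $\hat g$ induces $\gamma_c$ and $\sigma_c$ there. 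The conformal transformation law for the mean curvature gives, on $\Sigma_c$, $\hat H = H_g + 4\,\tfrac{\p}{\p\nu}\log u = H_g + 4|\nabla\log u|$, because $\nabla u$ is normal to $\Sigma_c$ and points toward the end (where $u$ increases). Thus hypothesis \eqref{e-LC-2-boundary-assumption} says exactly that $\Sigma_c$, as a boundary component of $(\Omega_c,\hat g)$, has mean curvature vector pointing strictly into $\Omega_c$.

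Next I cap off the remaining boundary $\Sigma$, on which $u/c \equiv 1/(2c)$, so that $\hat g$ induces $(2c)^{-4}\gamma$ there. Using the Mantoulidis--Miao construction together with the singular fill‑in technology developed in this paper, attach along $\Sigma$ a compact fill‑in $(\Omega',g')$ of $(\Sigma,(2c)^{-4}\gamma)$ with $R(g')\ge 0$ away from at most an isolated singular point of nonnegative mass (or with an inner minimal boundary), with boundary mean curvature a prescribed constant $H_0$. Fillability of $(\Sigma,\gamma)$ is guaranteed here by $H_g\le 0$: indeed $(M\cup\{\infty\},\phi^4 g)\in\FF(\Sigma,\gamma)$, with boundary mean curvature $4|\nabla\phi|-H_g>0$; and the Mantoulidis--Miao collar lets $H_0$ be taken arbitrarily large (when $(\Sigma,\gamma)$ supports such a collar, e.g.\ for spheres with $\lambda_1(-\Delta_\gamma+K_\gamma)>0$; in the remaining cases the right‑hand side of \eqref{e-LC-2} is typically $+\infty$ and there is nothing to prove). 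Because $H_g\le 0$, the $\Omega_c$‑side mean curvature of $\Sigma$ in $\hat g$ is bounded above by $16c^2\max_\Sigma|\nabla\phi|<\infty$, so taking $H_0$ above this bound makes the corner condition for distributional $R\ge 0$ hold across $\Sigma$. Then $\Omega := \Omega_c \cup_\Sigma \Omega'$ is a connected compact (possibly singular) fill‑in of $(\Sigma_c,\gamma_c)$ of $\Fcirc$‑type; since singular fill‑ins do not change $\Lcirc$, and $\Lcirc = \LL$ by \cite[Proposition 5.1]{M-M} and Proposition \ref{p-equality-l-ll}, we obtain $\LL(\Sigma_c,\gamma_c)\ge \tfrac1{8\pi}\int_{\Sigma_c}\hat H\, d\sigma_c$ (the minimal part of $\p\Omega$, if any, contributes nothing). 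Now the divergence theorem applied to the harmonic function $u$ on $\Omega_c$ gives
\[ \int_{\Sigma_c}\tfrac{\p u}{\p\nu}\, d\sigma_c = -\int_\Sigma \tfrac{\p u}{\p\eta}\, d\sigma = \tfrac12\int_\Sigma \tfrac{\p\phi}{\p\eta}\, d\sigma = 2\pi\,\CgSM, \]
by \eqref{e-capacity-meaning}, where $\eta$ is the outward unit normal of $M$. Combining, $\LL(\Sigma_c,\gamma_c)\ge \tfrac1{8\pi}\int_{\Sigma_c}\big(H_g + \tfrac4c\tfrac{\p u}{\p\nu}\big)\, d\sigma_c = \tfrac1{8\pi}\int_{\Sigma_c}H_g\, d\sigma_c + c^{-1}\CgSM$, which is \eqref{e-LC-2}.

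For the rigidity, suppose $\Sigma_c$ is a sphere with positive Gauss and mean curvature and that equality holds. By \eqref{e-ll-h-gauss}, $\LL(\Sigma_c,\gamma_c) = \tfrac1{8\pi}\int_{\Sigma_c}H_{g_0}\, d\sigma_c$, so equality in \eqref{e-LC-2} means that $\Omega$ realizes $\Lcirc(\Sigma_c,\gamma_c)=\LL(\Sigma_c,\gamma_c)$. The (singular) rigidity of the Shi--Tam/Mantoulidis--Miao theorem then forces: the singularity of $\Omega'$ has zero mass and the gluing across $\Sigma$ is $C^1$, so $\Omega$ is smooth with $R\ge 0$ and no corner; and $(\Omega,\hat g)$ is isometric to a round ball in $(\R^3,g_0)$. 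Hence $(\Omega_c,(u/c)^4 g)$ is a flat annular region, so $R(g)=0$ on $\Omega_c$; writing $g=(c/u)^4 g_0$ there, with $c/u$ now $g_0$‑harmonic and with its known boundary values on $\Sigma$ and $\Sigma_c$, one identifies $(\Omega_c,g)$ with a Riemannian Schwarzschild region whose inner boundary $\Sigma$ is, using $H_g\le 0$, the horizon. Finally $(M,g)$ is asymptotically flat with $R\ge 0$, has minimal boundary $\Sigma$, and contains this Schwarzschild collar, so the rigidity of the mass--capacity inequality \eqref{e-m-cap} (equivalently, of the Riemannian Penrose inequality) upgrades this to $(M,g)$ being globally the exterior of a Riemannian Schwarzschild manifold with horizon $\Sigma$. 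The converse is the direct computation in Schwarzschild.

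I expect two steps to be the real work. First, the cap in the second paragraph: producing a fill‑in of $(\Sigma,(2c)^{-4}\gamma)$ with prescribed large boundary mean curvature and $R\ge 0$ up to an isolated singularity of nonnegative mass, and controlling the corner term along $\Sigma$, is precisely where the singular fill‑in machinery of the paper is essential. Second, the rigidity: going from ``$(\Omega_c,\hat g)$ is a flat annulus'' to ``$(M,g)$ is the full Schwarzschild exterior'' requires unwinding the conformal factor exactly and then feeding in a global rigidity statement, and verifying that $\Sigma$ is forced to be the horizon.
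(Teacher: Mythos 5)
Your first paragraph is exactly right and coincides with the paper's setup, except that the paper works directly with the metric $u^4 g$ on $N(c)=\{u\le c\}$ inside the Bunting--Masood-ul-Alam double $N := M \sqcup \widetilde M /\sim$, so the fill-in it produces has boundary $(\Sigma_c, c^4\gamma_c)$ and one rescales $\Lambda$ at the end; your $(u/c)^4$ normalization is an equivalent bookkeeping choice.

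The second paragraph has a genuine gap, however. You propose gluing along $\Sigma$ a compact fill-in $(\Omega',g')$ of $(\Sigma,(2c)^{-4}\gamma)$ with \emph{arbitrarily large} prescribed boundary mean curvature $H_0$, citing ``the Mantoulidis--Miao collar.'' No such construction exists: the collar construction of \cite{MS15} produces asymptotically flat \emph{extensions}, not compact fill-ins, and since $\Lambda(\Sigma,(2c)^{-4}\gamma)<\infty$ for spheres, the total boundary mean curvature of any admissible fill-in is bounded, so $H_0$ cannot be pushed above an arbitrary threshold. The parenthetical escape clause (``in the remaining cases the right-hand side is typically $+\infty$'') is not justified either, since the quantity on the right of \eqref{e-LC-2} is $\LL(\Sigma_c,\gamma_c)$, not $\LL(\Sigma,\gamma)$, and it is finite (indeed the proof itself shows $\FF(\Sigma_c,\gamma_c)\ne\emptyset$). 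The fix is hiding in your own sentence: the fill-in you cite as evidence of fillability, $(M\cup\{\infty\},\phi^4 g)$ scaled by $(2c)^{-4}$, \emph{is} the right one to glue. Its boundary mean curvature is $(2c)^2(4|\nabla\phi|-H_g)$, while the $\Omega_c$-side mean curvature of $\Sigma$ under $\hat g$ (with respect to the same normal pointing toward $\Sigma_c$) is $(2c)^2(H_g+4|\nabla\phi|)$; the corner condition $(\mathbf{C})(\mathbf{a})$ is thus equivalent to $H_g\le 0$, exactly the hypothesis. This is precisely the Bunting--Masood-ul-Alam reflection: your $(M\cup\{\infty\},(2c)^{-4}\phi^4 g)$ is, under the mirror map, identical to $(\widetilde M\cup\{\infty_{\widetilde M}\}, c^{-4}u^4\widetilde g)$, so once the gap is closed your argument becomes the paper's proof. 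A point singularity of type $(\mathbf{C})(\mathbf{b})$ is also needed at the compactified $\infty$ (Lemma \ref{l-compactification}), and then Theorem \ref{t-fsing} gives \eqref{e-LC-2}.

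On rigidity your sketch is too loose. Equality in the fill-in estimate forces flatness, not that $(\Omega,\hat g)$ is a round ball; and the corner along $\Sigma$ is not automatically erased---the rigidity of condition $(\mathbf{C})(\mathbf{a})$ only gives $H_+ = H_-$, i.e.\ $H_g\equiv 0$ on $\Sigma$. The paper must then attach a Shi--Tam exterior along $\Sigma_c$, invoke the Shi--Tam monotonicity to conclude zero ADM mass, apply McFeron--Sz\'ekelyhidi to get a global $C^{1,\alpha}$ isometry to $\R^3$, and only then use the Green's function characterization of Euclidean balls (Theorem \ref{t-isometry} via Remark \ref{r-isometry}, applied to sublevel sets $D(t)$ with small $|H_g(t)|$) to pin down the Schwarzschild structure. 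You jump to the round-ball conclusion without these intermediate steps.
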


We also wish to point out that estimates on suitable equipotential surfaces in static asymptotically flat manifolds were given by Agostiniani--Mazzieri \cite{AgostinianiMazzieri17}.

%\begin{rem}[Theorems \ref{t-LC-1}, \ref{t-LC-2}, and Brown--York mass] \label{r-t-LC-1-mby}
%	Suppose $\Sigma$ in Theorem \ref{t-LC-1} consists of spheres with positive Gauss curvature. Then, the right hand side of \eqref{e-LC-1} equals $\mby(\Sigma; M, g)$. We emphasize that taking $(\Omega, g)$ to be the \emph{noncompact} manifold $(M, g)$ in Definition \ref{d-mby} is abuse of notation, and leads to a nonstandard sign for the mean curvature $H_g$ on $\Sigma$. On the other hand, if $\Sigma_c$ in Theorem \ref{t-LC-2} consists of spheres with positive Gauss curvature, then the right hand side of \eqref{e-LC-2} equals $\mby(\Sigma_c; \Omega_c, g)$, provided $\Omega_c$ is the \emph{compact} domain bounded by $\Sigma$ and $\Sigma_c$; $H_g$ on $\Sigma_c$ here will have the standard sign.
%\end{rem}	

We conclude the introduction by listing some further results we obtained on the boundary capacity of sets in the Schwarzschild manifolds $(M_m, g_m)$ of mass $m > 0$:
\[ M_m := \R^3 \setminus \{0\}, \; g_m := \left( 1 + \frac{m}{2r} \right)^4 g_0, \]
where $g_0$ denotes the flat Euclidean metric on $\R^3$. They are related to some well-known results in $\R^3$.

Before we state the next result, let us fix some notation. In the following, if $\Omega$ is a domain in $\R^3$ containing the origin with $\p\Omega=\Sigma$, then $M_m^\Sigma$ denotes the set $M_m\setminus\Omega$ in the Schwarzschild manifold $(M_m,g_m)$.

We will generalize the following classical  Poincar\'e--Faber--Szeg\"o capacity inequality on $\R^3$ (see, e.g., \cite[p.17]{PolyaSzego51}). If $\Sigma$ is the boundary of a compact domain $\Omega$, then
	\be \label{e-szego}
		\mathcal{C}_{g_0}(\Sigma, \R^3 \setminus \Omega) \ge \left( \frac{3 |\Omega|}{4\pi} \right)^{1/3},
	\ee
where $|\Omega|$ is the volume of $\Omega$, 	with equality if and only if $\Omega$ is a geodesic ball. In the case of Schwarzschild manifolds, we have:

\begin{thm} \label{t-szego-schwarzschild}
	 Let $\Sigma$ be a closed surface bounding a domain with the horizon $\Sh$ in a Schwarzschild manifold $(M_m, g_m)$ with mass $ m > 0$. Let $\Sigma^*$ be the unique rotationally symmetric sphere in $(M_m , g_m)$ which encloses a domain with $ \Sh$ with the same (signed) volume as $\Sigma$. Then
	\be \label{e-szego-schwarzschild}
		\mathcal{C}_{g_m}(\Sigma, M_m^\Sigma) \ge \mathcal{C}_{g_m}(\Sigma^*, M_m^{\Sigma^*});
	\ee
	here, $M_m^\Sigma \subset M_m$ is exterior of $\Sigma$, and similarly for $M_m^{\Sigma^*}$, $\Sigma^*$. Equality holds in \eqref{e-szego-schwarzschild} if and only if $\Sigma = \Sigma^*$.
\end{thm}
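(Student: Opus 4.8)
The plan is to reduce this to a one-dimensional comparison along the rotationally symmetric foliation of $(M_m, g_m)$, using the symmetrization principle that underlies Poincar\'e--Faber--Szeg\"o. Write $(M_m, g_m)$ in the ``area-radius'' coordinate $s \in [2m, \infty)$, so that $g_m = \frac{ds^2}{1 - 2m/s} + s^2 g_{\mathbb{S}^2}$, the horizon $\Sh$ is $\{s = 2m\}$, and the rotationally symmetric spheres are the slices $\{s = \text{const}\}$. For such a slice, the capacity potential solves an ODE and $\mathcal{C}_{g_m}(\{s=s_0\}, M_m^{\{s=s_0\}})$ has an explicit closed form; moreover the enclosed signed volume $V(s_0) = \int_{2m}^{s_0} \frac{s^2}{\sqrt{1-2m/s}}\,ds$ (counting the region between $\Sh$ and the slice) is a strictly increasing function of $s_0$, so $\Sigma^*$ is well defined and one can legitimately compare everything against $V$.

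First I would set up the variational characterization: $4\pi\,\mathcal{C}_{g_m}(\Sigma, M_m^\Sigma) = \inf \int_{M_m^\Sigma} |\nabla \psi|^2 \, dv_{g_m}$ over $\psi$ with $\psi = 1$ on $\Sigma$ and $\psi \to 0$ at infinity, with the harmonic potential $\phi$ as the minimizer. Then, following the classical symmetrization argument (cf.\ \cite{PolyaSzego51}), I would replace $\phi$ by the rotationally symmetric function $\phi^*$ on $M_m^{\Sigma^*}$ obtained by rearranging: for each value $t \in (0,1)$, the superlevel set $\{\phi > t\}$ is a region in $M_m$ containing $\Sh$ with some signed volume $v(t)$, and we let $\{\phi^* > t\}$ be the rotationally symmetric region containing $\Sh$ with the \emph{same} signed volume $v(t)$. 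Since $\{\phi > 1\text{ side}\}$ encloses volume equal to that enclosed by $\Sigma$, the outermost level set of $\phi^*$ is exactly $\Sigma^*$, so $\phi^*$ is an admissible competitor for the capacity of $\Sigma^*$. The two key facts needed are (a) the co-area formula gives $\int |\nabla \phi|^2 = \int_0^1 \big( \int_{\{\phi = t\}} |\nabla \phi| \, d\mathcal{H}^2 \big) dt$, and analogously for $\phi^*$, and (b) a Schwarzschild isoperimetric-type inequality: among all regions in $(M_m, g_m)$ enclosing $\Sh$ with a prescribed signed volume, the rotationally symmetric sphere has the least boundary area. Fact (b) is exactly the isoperimetric inequality in Schwarzschild proved by Bray (and Bray--Morgan); combining it with the Cauchy--Schwarz inequality $\big(\int_{\{\phi=t\}} 1\big)\big(\int_{\{\phi=t\}} |\nabla\phi|^2\big) \ge \big(\int_{\{\phi=t\}}|\nabla\phi|\big)^2$ — rearranged against the distribution function $v(t)$ whose derivative is $-\int_{\{\phi=t\}} |\nabla\phi|^{-1} \, d\mathcal{H}^2$ — yields $\int_{\{\phi = t\}}|\nabla\phi|^2 \, d\mathcal{H}^2 \ge \int_{\{\phi^*=t\}}|\nabla\phi^*|^2 \, d\mathcal{H}^2$ for a.e.\ $t$. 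Integrating in $t$ gives $\mathcal{C}_{g_m}(\Sigma, M_m^\Sigma) \ge \mathcal{C}_{g_m}(\Sigma^*, M_m^{\Sigma^*})$.

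The main obstacle is making the symmetrization rigorous in the Schwarzschild geometry rather than in $\R^3$: one must verify that the rearranged function $\phi^*$ is genuinely a valid competitor (Lipschitz, correct boundary/decay behavior, finite Dirichlet energy) and that the standard P\'olya--Szeg\"o chain of inequalities goes through with the \emph{signed} volume (i.e.\ always measuring from the horizon outward) replacing Euclidean volume — this is where Bray's sharp isoperimetric inequality for Schwarzschild is essential and cannot be replaced by the Euclidean one. A secondary technical point is handling the case where $\Sigma$ is not connected or not mean-convex, and where $\phi$ may have critical points, so that level sets $\{\phi = t\}$ are only rectifiable for a.e.\ $t$; the co-area formula and Sard's theorem handle this, but it should be stated carefully.

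For the rigidity statement: equality forces equality in the Cauchy--Schwarz step for a.e.\ $t$, which means $|\nabla \phi|$ is constant on each level set $\{\phi = t\}$, and equality in Bray's isoperimetric inequality for a.e.\ enclosed volume, which forces each superlevel set $\{\phi > t\}$ to be bounded by a rotationally symmetric sphere. Together these imply $\phi$ is itself rotationally symmetric and hence $\{\phi = 1\} = \Sigma$ coincides with its symmetrization $\Sigma^*$; I would spell out the step from ``all level sets are round'' to ``$\Sigma = \Sigma^*$'' using that $\phi$ and $\phi^*$ then solve the same ODE with the same boundary data. Conversely if $\Sigma = \Sigma^*$ the inequality is trivially an equality.
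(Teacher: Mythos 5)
Your overall strategy matches the paper's: symmetrize the capacity potential using signed volume measured from the horizon, invoke Bray's isoperimetric inequality in Schwarzschild, and close the chain with the coarea formula and Cauchy--Schwarz. That is precisely the paper's argument. However, there is a concrete slip in the middle of your chain that you should fix. You state Cauchy--Schwarz in the form
\[
\Big(\int_{\{\phi=t\}} 1\Big)\Big(\int_{\{\phi=t\}} |\nabla\phi|^2\Big) \ge \Big(\int_{\{\phi=t\}}|\nabla\phi|\Big)^2,
\]
and then assert $\int_{\{\phi = t\}}|\nabla\phi|^2 \, d\mathcal{H}^2 \ge \int_{\{\phi^*=t\}}|\nabla\phi^*|^2 \, d\mathcal{H}^2$, claiming that ``integrating in $t$'' gives the capacity inequality. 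This does not close: by the coarea formula you already quoted correctly in item (a), integrating $\int_{\{\phi = t\}}|\nabla\phi|\,d\mathcal{H}^2$ (one power, not two) in $t$ is what recovers $\int |\nabla\phi|^2\,dv = 4\pi\,\mathcal{C}$; integrating $\int_{\{\phi=t\}}|\nabla\phi|^2$ would instead produce $\int|\nabla\phi|^3\,dv$, which is not the Dirichlet energy. The form of Cauchy--Schwarz you need is the ``$|\nabla\phi|$ against $|\nabla\phi|^{-1}$'' pairing,
\[
\Big(\int_{\{\phi=t\}} 1\Big)^2 \le \Big(\int_{\{\phi=t\}}|\nabla\phi|\Big)\Big(\int_{\{\phi=t\}}|\nabla\phi|^{-1}\Big) = \Big(\int_{\{\phi=t\}}|\nabla\phi|\Big)\,V'(t),
\]
so that $\int_{\{\phi=t\}}|\nabla\phi|\,d\mathcal{H}^2 \ge |\Sigma_t|^2/V'(t) \ge |\Sigma_t^*|^2/V'(t) = \int_{\{\phi^*=t\}}|\nabla\phi^*|\,d\mathcal{H}^2$, using Bray's isoperimetric inequality in the middle and the constancy of $|\nabla\phi^*|$ and equality of distribution-function derivatives at the end. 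Integrating this (first-power) inequality over $t\in(0,1)$ is what yields $\int|\nabla\phi|^2 \ge \int|\nabla\phi^*|^2$ and hence \eqref{e-szego-schwarzschild}. Once this is corrected, your outline --- including the competitor argument for $\phi^*$ and the rigidity discussion via equality in both Cauchy--Schwarz and the isoperimetric inequality --- is sound and, in fact, slightly more explicit than the paper's proof, which leaves the rigidity clause implicit.
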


Using this together with Theorems \ref{t-LC-1} and \ref{t-szego-schwarzschild}, one can obtain some new comparisons between the Brown--York masses of certain domains in Schwarzschild and their rotationally symmetric counterparts. See Corollaries \ref{c-schwarzschild-comparison} and  \ref{c-schwarzschild-mby-comparison}  for more details.

\begin{rem} \label{r-capacity-rotsym}
	By examining its proof in Section \ref{s-schwarzschild}, one can see that Theorem \ref{t-szego-schwarzschild} remains true for rotationally symmetric asymptotically flat manifolds that can be foliated by isoperimetric hypersurfaces. See \cite{BrayMorgan02,ChodoshEichmairShiYu} for some sufficient conditions on the existence of such a foliation.
\end{rem}

The organization of the paper is as follows. In Section \ref{s-hf-asymptotics}, we   prove a special case of Corollary \ref{c-capacity-Gauss} to motivate our approach. This leads us to consider   fill-ins similar to those in $\FF(\Sigma,\gamma)$ but whose metric need not be smooth. Singular fill-ins will be discussed in detail in Section \ref{s-singular}. In some cases, one can consider singular fill-ins with less stringent conditions. This will be discussed in Section \ref{s-isolated-singularities}. In Section \ref{s-LC}, we prove Theorems \ref{t-LC-1}, \ref{t-LC-2} and their corollaries. In Section \ref{s-schwarzschild}, we prove Theorem \ref{t-szego-schwarzschild} for Schwarzschild manifolds and some interesting corollaries. In Section \ref{s-examples-inequality-1} we give   some useful examples that help illustrate our results. To obtain the rigidity parts of Theorems \ref{t-LC-1} and \ref{t-LC-2}, we need a result in function theory, which is given in Section \ref{s-isometry}. {We believe this result is of independent interest in the study of overdetermined elliptic PDE.}

%\section{Harmonically flat asymptotics}
\section{Motivating outline}
\label{s-hf-asymptotics}

As motivation for the tools developed later in this paper, %we first give a direct
{let us give an outline of the}
proof of Corollary \ref{c-capacity-Gauss} to Theorem \ref{t-LC-1}.
% in the simpler setting where $(M, g)$ has \emph{harmonically flat} asymptotics; namely, when there exists $K \subset M$ such that, on $M \setminus K \approx \R^3 \setminus \overline{B}_1(0)$,
%\be \label{e-harmonically-flat}
%	g = u^4 g_0, \ \Delta_{g_0} u = 0 \text{ on } M \setminus K, \ u(x) \to 1 \text{ as } x \to \infty.
%\ee
%Here $g_0$ is the flat metric on $\R^3$ and $\Delta$ is the Laplacian:

Consider the boundary capacity potential $\phi : M \to (0, 1)$ from \eqref{e-capacity-pde} and, then, consider the conformal compactification
\begin{equation} \label{e-g-phi-hf}
	g_{\phi} := \phi^4 g
\end{equation}
Let $(\Omega^3, \bg_\phi)$ denote the metric completion of $(M^3, g_\phi)$.

\begin{clai*}
	 %$(\Omega^3, \bg_\phi)$ is a smooth, compact Riemannian manifold with boundary isometric to $(\Sigma, \gamma)$.
	 {$\Omega$ is the compactification of the asymptotic end of $M$ to a point, and $\bar g_\phi$ is a Riemannian metric with regularity $W^{1,p}$, $p > 6$, which is (locally) smooth away from an interior point singularity.}
\end{clai*}
%(This needn't hold if our asymptotics are not harmonically flat; conformal blowdowns generally produce \emph{singular} Riemannian metrics.)

%\begin{proof}[Proof of claim]
%	Note that $\bg_\phi = (\phi u)^4 g_0$. Recall that the inversion map $\Phi$ from \eqref{e-inversion-map} is a conformal diffeomorphism of Euclidean space with $\Phi^* g_0 = r^{-4} (dr^2 + r^2 d\theta^2) = r^{-4} g_0$. In particular,
%	\[ \Phi^* \bg_\phi = (|\Phi|^{-1} (\phi u) \circ \Phi)^4 g_0. \]
%	From Lemma \ref{l-phi},
%	\[ |\Phi(x)|^{-1} |(\phi u) \circ \Phi(x)| \to \CgSM > 0 \text{ as } x \to \infty. \]
%	Thus, $\Phi^* \bg_\phi$ is smooth away from an isolated point singularity, across which it is uniformly Euclidean. In fact, \eqref{e-harmonically-flat} together with $\Delta_g \phi = 0$ give $\Delta_{g_0} (\phi u) = 0$. Since $|\Phi|^{-1} (\phi u) \circ \Phi$ is the Kelvin transform of $\phi u$, it must also be harmonic, so the singularity is removable.
%\end{proof}

{This is shown in Section \ref{s-LC}. The presence and effect of singularities on the arguments we are about to explain is an issue we will have to understand in detail, but (for the purposes of this motivating discussion) let us see how to proceed assuming that $(\Omega, \bar g_\phi)$ is smooth.} The scalar curvature of $R(\bg_\phi)$ is:
\be \label{e-gphi-R}
	R(\bg_\phi) = 8 \, \phi^{-5} (-\Delta_g \phi + \tfrac18 R(g) \phi) = \phi^{-4} R(g) \geq 0.
\ee
%and this extends to the finite point brought in from infinity because it's a removable singularity.
By \eqref{e-LC-1-boundary-assumption}, the mean curvature of $\Sigma$ with respect to $\bg_\phi$ is
\be \label{e-gphi-H}
	H_{\bg_\phi} = 4 \tfrac{\p}{\p \nu} \phi > 0,
\ee
where $\nu$ is the unit normal of $\Sigma \subset (M, g)$ pointing outside of $M$.

Applying \cite[Theorem 1]{ShiTam02} to $(\Omega, \bar g_\phi)$, together with \eqref{e-gphi-H}:
\[ 4 \int_{\Sigma_\ell} \tfrac{\p}{\p \nu} \phi \, d\sigma_\ell = \int_{\Sigma_\ell} H_{g_\phi} \, d\sigma_\ell \leq \int_{\Sigma_\ell} H_{g_0} \, d\sigma_\ell \]
for $\ell = 1, \ldots, k$, where $H_{g_0}$ denotes the mean curvature of the isometric embedding of $\Sigma_\ell$, with its induced metric, into $(\R^3, g_0)$. Summing over $\ell = 1, \ldots, k$, integrating by parts using \eqref{e-capacity-pde} and the decay from Lemma \ref{l-phi}, we get
\[ 16\pi \, \CgSM = \int_{\Sigma} \tfrac{\p}{\p \nu} \phi \, d\sigma \leq \sum_{\ell=1}^k \int_{\Sigma_\ell} H_{g_0} \, d\sigma_\ell, \]
which is precisely \eqref{e-capacity-spheres-Gauss}.

Suppose, now, that equality holds for \eqref{e-capacity-spheres-Gauss}. We're in the rigidity case of \cite[Theorem 1]{ShiTam02} above. This already implies that $k=1$ and that $(\Omega, \bar g_\phi)$ is isometric to a connected domain $(\Omega, g_0)$ in $\R^3$. Moreover, we may assume that $\Omega$ contains a point $p$ that corresponds to $\infty$ in $(M, g)$. By Theorem \ref{t-isometry}, one can show $(\Omega,\bar g_\phi)$ is isometric to a Euclidean ball with center at $p$. This will imply that $(M,g)$ is a Riemannian Schwarzschild manifold outside the horizon. The details will be given in the proof of the general case
{in Section \ref{s-LC}.}

\section{Singular fill-ins} \label{s-singular}

Motivated by our discussion %of harmonically flat manifolds $(M, g)$
in Section \ref{s-hf-asymptotics}, one would like to study metrics such as $g_\phi$ from \eqref{e-g-phi-hf}, which may be singular at a point. For later {purposes}, we also will consider metrics which may not be smooth on a more general subset. Hence we consider the following:

\begin{df}[Singular fill-ins] \label{d-fsing}
	Let $\Sigma$ be a closed, orientable manifold of dimension $n-1$ with $n\ge 3$  (possibly disconnected) endowed with a smooth Riemannian metric $\gamma$. A smooth compact  manifold  $\Omega$ with boundary, endowed with {an $L^\infty$} Riemannian metric $g$ is said to be a (possibly) singular fill-in of $(\Sigma,\gamma)$ if
	\begin{enumerate}
		\item[(i)] $g$ is $C^\infty$ locally away from a compact subset $\sing g \subset \Omega \setminus \p \Omega$;
		\item[(ii)] $\p \Omega$, with the induced metric, is isometric to $(\Sigma, \gamma)$;
		\item[(iii)]  the mean curvature of $\p \Omega$ is positive; and
		\item[(iv)] $R(g) \geq 0$ on $\Omega \setminus \sing g$.
	\end{enumerate}
\end{df}

{Recall that an $L^\infty$ Riemannian metric $g$ on a smooth compact manifold(-with-boundary) is a measurable section $g$ of the space of positive definite symmetric 2-tensors, so that $c^{-1} g_0 \leq g \leq c g_0$, $g$-a.e., for a smooth background metric $g_0$  and a $c > 1$.}

As in the case $n=3$, if $g$ is smooth, then we say that $(\Omega,g)$ is a (smooth) fill-in of $(\Sigma,\gamma)$.

\begin{rem} \label{r-mean-curvature-singular}
	In this section the mean curvature of $\p \Omega$ is computed as in Remark \ref{r-mean-curvature}, with $\nu$ being the unit normal pointing outside of $\Omega$.
\end{rem}

A singular fill-in $(\Omega,g)$  of $(\Sigma,\gamma)$  is said to   satisfy condition $(\mathbf{C})$  if
  $\sing g$ is a disjoint union of compact sets $P$, $Q$ that satisfy:
\begin{itemize}
\item[({\bf a})] $P$ is  a smoothly embedded two-sided compact hypersurface without boundary,
	\begin{enumerate}
		\item[(i)] near $P$, $g$ can be expressed as
			\[ g=dt^2+g_{\pm}(t,z) \]
			for smooth coordinates $(t, z) \in (-a, a) \times P$, $a > 0$, where $g_+$, $g_-$ are defined and smooth on $t \geq 0$, $t \leq 0$, and $g_-(\cdot,0)=g_+(\cdot,0)$;
		\item[(ii)] the mean curvatures $H_+$, $H_-$ of the unit normal $\displaystyle{\tfrac{\p}{\p t}}$ at $P$ with respect to $g_+$, $g_-$, satisfy $H_+ \leq H_-$.
	\end{enumerate}
\item[({\bf b})] $Q$ is a disjoint union of compact sets $Q_1,\dots,Q_N$  such that, for each $k = 1, \ldots, N$
	\begin{enumerate}
		\item[(i)] $g$ is in $W^{1,p_k}$ in a neighborhood of $Q_k$  (with respect to a smooth background metric);
		\item[(ii)] $Q_k$ has codimension at least $\ell_k$, with $\ell_k > (\tfrac{2}{n} - \tfrac{1}{p_k})^{-1} >0$, in the sense that $\limsup_{\epsilon \to 0} \epsilon^{-\ell_k} | Q_k(\epsilon)|_g < \infty$; $|Q_k(\epsilon)|_{g}$ denotes its volume.
	\end{enumerate}

\end{itemize}
Here and below $A(\e)$ is the set $ \{ \dist_g(A, \cdot) < \epsilon \}$. If $\e$ is small enough, $P(\e)=(-\e,\e)\times P$.

We want to prove the following:
\begin{thm} \label{t-fsing}
	Let $\Sigma$ be a closed, orientable  manifold of dimension $n-1$, $n\ge 3$, endowed with a Riemannian metric $\gamma$. Suppose  $(\Omega^n, g)$ is a singular fill-in of $(\Sigma,\gamma)$  satisfying  condition $(\mathbf{C})$ with $P, Q$ mentioned in that condition. Then for any $\delta>0$,  there is a smooth metric $h$ on $\Omega$ such that $(\Omega,h)$ is a fill-in of $(\Sigma,\gamma)$ and
 $$
 \int_\Sigma H_h\, d\sigma \ge \int_\Sigma H_g\, d\sigma-\delta.
 $$
 Moreover, if $g$ is not Ricci flat at some point in $\Omega\setminus \sing g$,  or $H_->H_+$ somewhere on $P$, then there is a smooth metric $h$ on $\Omega$ such that $(\Omega,h)$ is a fill-in of $(\Sigma,\gamma)$ with
 $$
 \int_\Sigma H_h\, d\sigma > \int_\Sigma H_g\, d\sigma.
 $$
 In particular, if $n=3$, then
 \be\label{e-fsing-inequality}
\frac1{8\pi} \int_\Sigma H_g d\sigma\le \Lambda(\Sigma,\gamma)
\ee
with equality only if $g$ is flat outside $\sing g$ and $H_+=H_-$ along $P$.

\end{thm}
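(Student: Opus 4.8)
\emph{Overview and Step 1 (smoothing).} The plan is to regularize $g$ near $\sing g = P \cup Q$ and then conformally correct to a smooth fill-in, controlling the boundary mean curvature throughout; the strict inequality and the rigidity in \eqref{e-fsing-inequality} then come from a positive scalar-curvature ``budget''. For the smoothing: near $P$ we use Miao's normal-variable mollification---writing $g = dt^2 + g_\pm(t,z)$ and interpolating $g_\pm$ to a smooth $\tilde g(t,z)$ on $(-\epsilon,\epsilon)\times P$. Because $H_+ \le H_-$ one can arrange that the leading part of the scalar curvature is $-\partial_t \tilde H + O(1)$ with $-\partial_t\tilde H \ge 0$, so $R(g_\epsilon) \ge -C$ on $P(\epsilon)$; since $|P(\epsilon)|_{g_\epsilon} = O(\epsilon)$ this gives $\|R(g_\epsilon)_-\|_{L^{n/2}(P(\epsilon))} \to 0$, and moreover $\int_{P(\epsilon)} R(g_\epsilon)_+\,dv_{g_\epsilon} \to 2\int_P (H_- - H_+)\,d\sigma \ge 0$, with the limit positive exactly when $H_- > H_+$ somewhere. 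Near each $Q_k$ we mollify $g$ at scale $\epsilon$ inside $Q_k(\epsilon)$; from $g \in W^{1,p_k}$ one has $\|\partial^2 g_\epsilon\|_{L^{p_k}(Q_k(\epsilon))} \lesssim \epsilon^{-1}\|\partial g\|_{L^{p_k}(Q_k(2\epsilon))}$, and Hölder on the set $Q_k(\epsilon)$ (of volume $O(\epsilon^{\ell_k})$) converts this into $\|R(g_\epsilon)_-\|_{L^{n/2}(Q_k(\epsilon))} = O(\epsilon^{\ell_k(2/n - 1/p_k) - 1}) \to 0$, the exponent being positive precisely because $\ell_k > (2/n - 1/p_k)^{-1}$. (Such higher-codimension smoothing is in the spirit of the scalar-curvature regularizations of Li--Mantoulidis, Shi--Tam--Wang, Jauregui and others.) Since $\sing g$ is compact and contained in $\Omega \setminus \partial\Omega$, for small $\epsilon$ these neighborhoods avoid $\partial\Omega$; gluing and leaving $g$ untouched elsewhere yields smooth metrics $g_\epsilon$ on $\Omega$ with $g_\epsilon = g$ near $\partial\Omega$, $R(g_\epsilon)\ge 0$ off $\sing g(\epsilon)$, and $\|R(g_\epsilon)_-\|_{L^{n/2}(\Omega, g_\epsilon)} \to 0$ (hence also $\|R(g_\epsilon)_-\|_{L^1} \to 0$).

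\emph{Step 2 (conformal correction).} By the Sobolev inequality, smallness of $\|R(g_\epsilon)_-\|_{L^{n/2}}$ forces the conformal Laplacian $L_{g_\epsilon} := -\tfrac{4(n-1)}{n-2}\Delta_{g_\epsilon} + R(g_\epsilon)$ to have positive first Dirichlet eigenvalue for small $\epsilon$; let $u_\epsilon > 0$ be the (smooth) solution of $L_{g_\epsilon} u_\epsilon = 0$ with $u_\epsilon = 1$ on $\partial\Omega$, and set $h_\epsilon := u_\epsilon^{4/(n-2)} g_\epsilon$. Then $h_\epsilon$ is smooth, $R(h_\epsilon) = u_\epsilon^{-(n+2)/(n-2)} L_{g_\epsilon}u_\epsilon = 0$, and $h_\epsilon|_{\partial\Omega} = g_\epsilon|_{\partial\Omega} = \gamma$. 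By the conformal transformation law for mean curvature, and since $u_\epsilon \equiv 1$ and $g_\epsilon = g$ on $\partial\Omega$, one has $H_{h_\epsilon} = H_g + \tfrac{2(n-1)}{n-2}\partial_\nu u_\epsilon$ on $\partial\Omega$; combined with the divergence theorem and $\Delta_{g_\epsilon} u_\epsilon = \tfrac{n-2}{4(n-1)} R(g_\epsilon) u_\epsilon$ this gives the key identity
\[
\int_\Sigma H_{h_\epsilon}\,d\sigma - \int_\Sigma H_g\,d\sigma = \tfrac12 \int_\Omega R(g_\epsilon)\, u_\epsilon \, dv_{g_\epsilon}.
\]
A maximum-principle/Moser-iteration argument based on $\|R(g_\epsilon)_-\|_{L^{n/2}} \to 0$ shows $u_\epsilon$ is uniformly bounded and converges to $1$ near $\partial\Omega$ in a sense that forces $\liminf_\epsilon \inf_{\partial\Omega} \partial_\nu u_\epsilon \ge 0$; since $H_g$ is bounded below by a positive constant, $H_{h_\epsilon} > 0$ for small $\epsilon$, so $(\Omega, h_\epsilon)$ is a smooth fill-in of $(\Sigma,\gamma)$ (in particular $\FF(\Sigma,\gamma) \ne \emptyset$). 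Finally the right-hand side above is $\ge -C\|R(g_\epsilon)_-\|_{L^{n/2}} \ge -\delta$ for small $\epsilon$, which is the first assertion.

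\emph{Step 3 (positive budget: strict inequality and rigidity).} Suppose now $g$ is not Ricci flat at some $p \in \Omega \setminus \sing g$, or $H_- > H_+$ on a nonempty open $P_0 \subset P$. If $R(g) > 0$ at some point then, by continuity, there is a fixed open set $V \Subset \Omega \setminus (\sing g \cup \partial\Omega)$ with $\int_V R(g_\epsilon)\,dv_{g_\epsilon} \ge c_0 > 0$ for all small $\epsilon$. If instead $R(g) \equiv 0$ but $\Ric(g)(p) \ne 0$, a standard localized scalar-curvature deformation (of Kazdan--Warner/Corvino type, perturbing $g$ in the direction $-\Ric(g)$ near $p$ and then conformally correcting) supported in a small ball around $p$ disjoint from $\sing g$ and $\partial\Omega$ replaces $g$ by another singular fill-in $g'$ satisfying $(\mathbf C)$ with $R(g') \ge 0$ and $R(g') > 0$ on an open set, reducing to the previous case. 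And if $H_- > H_+$ on $P_0$, Step 1 gives $\int_{P(\epsilon)} R(g_\epsilon)_+ \, dv_{g_\epsilon} \ge c_0 > 0$ for all small $\epsilon$. In every case, using $\|R(g_\epsilon)_-\|_{L^{n/2}} \to 0$ and a uniform lower bound $\inf u_\epsilon \ge c_1 > 0$ on the region carrying the positive scalar curvature---uniform because $u_\epsilon$ converges to the positive solution of the limiting Dirichlet problem, whose potential has at worst a (subcritical) codimension-one singular part---the identity of Step 2 yields $\int_\Sigma H_{h_\epsilon}\,d\sigma - \int_\Sigma H_g\,d\sigma \ge \tfrac12\big(c_0 c_1 - o(1)\big) > 0$ for small $\epsilon$; this is the second assertion. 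For $n = 3$, the first assertion and the definition of $\LL(\Sigma,\gamma)$ give $\tfrac1{8\pi}\int_\Sigma H_g\,d\sigma \le \tfrac1{8\pi}\int_\Sigma H_{h_\epsilon}\,d\sigma + \delta \le \LL(\Sigma,\gamma) + \delta$ for every $\delta > 0$, hence \eqref{e-fsing-inequality}; and if equality held there, the second assertion would be contradicted unless $g$ is Ricci flat on $\Omega \setminus \sing g$---equivalently flat, since $n = 3$---and $H_+ = H_-$ along $P$.

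\emph{Main obstacle.} The technical heart is Step 1, specifically the regularization near the low-regularity, higher-codimension sets $Q_k$ and the verification that the negative part of the scalar curvature is annihilated in $L^{n/2}$: this is exactly where the quantitative hypotheses in $(\mathbf C)$(b)---the $W^{1,p_k}$ control and the codimension bound $\ell_k > (2/n - 1/p_k)^{-1}$---are used, and one must combine it with the corner smoothing near $P$ without creating new singularities or disturbing $\partial\Omega$. A secondary difficulty is the boundary behaviour of $u_\epsilon$ (to keep $H_{h_\epsilon} > 0$) and the uniform positivity of $\inf u_\epsilon$ on the support of the positive scalar curvature as $\epsilon \to 0$.
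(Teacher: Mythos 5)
Your overall strategy---regularize near $\sing g$, conformally correct to scalar-flat, and track $\int_\Sigma H$ via $\int_\Sigma H_{h_\epsilon} - \int_\Sigma H_g = \tfrac12 \int_\Omega R(g_\epsilon) u_\epsilon \, dv_{g_\epsilon}$---is exactly the paper's, including the same divergence-theorem identity and the same Sobolev/H\"older bookkeeping for why $\|R(g_\epsilon)_-\|_{L^{n/2}} \to 0$ under the quantitative hypotheses of condition $(\mathbf{C})(\mathbf{b})$. Steps 1 and 2 are essentially the paper's Lemma \ref{l-smoothing}, the conformal equation \eqref{e-conformally-scalar-flat}, and Lemma \ref{l-du-boundary}(i).

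The genuine gap is in Step 3, in the Ricci-nonflat case. You claim that a localized perturbation ``perturbing $g$ in the direction $-\Ric(g)$ near $p$ and then conformally correcting'' yields a new singular fill-in $g'$ satisfying $(\mathbf{C})$ with $R(g') \geq 0$ \emph{and} $R(g') > 0$ on an open set, ``reducing to the previous case.'' This cannot work as stated: if you conformally correct $g_t := g - t\chi\mathring{\Ric}$ by solving $L_{g_t}u_t = 0$ with $u_t\equiv 1$ on $\Sigma$, the resulting metric is \emph{scalar-flat}, not scalar-positive on an open set; and if you instead prescribe $L_{g_t}u_t = f \geq 0$ with $f$ bumping you above zero, the boundary normal derivative $\partial_\nu u_t$ acquires an extra term of unknown sign, so the boundary mean curvature is no longer controlled. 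Moreover, solving the conformal equation on the \emph{still-singular} $g_t$ requires the potential $R(g_t)$ to live in a workable function space near $Q$, which is not automatic for $W^{1,p}$ metrics; the paper avoids this entirely by smoothing first and solving the conformal equation only on smooth approximants. What actually works (and what the paper does) is not a reduction to ``$R>0$ on an open set,'' but an interleaved argument: incorporate the perturbation $\tau\chi\mathring{\Ric}$ into the approximant as $g_{\epsilon,\tau} := g_\epsilon - \tau h$, prove a quantitative scalar-curvature estimate (Lemma \ref{l-scalar-est}) that yields the gain $\tfrac{\tau}{2}\int\chi|\mathring{\Ric}|^2 u^2$ with losses of order $\tau^{3/2}$ and $\tau^{1/2}\int|\nabla u|^2$ under the constraint $\epsilon^\alpha \leq \tau^2$, and then read off from the conformal equation (Lemma \ref{l-du-boundary}(ii)) that $\int_\Sigma \partial_\nu u_i > 0$ for large $i$. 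Your inequality ``$\geq \tfrac12(c_0 c_1 - o(1)) > 0$'' gestures at this budget competition, but without the explicit $\tau$-vs.-$\epsilon$ coupling and without a correct description of how the Ricci term actually enters, that step is not yet a proof. Similarly, your uniform lower bound $\inf u_\epsilon \geq c_1$ near $P(\epsilon)$ is asserted via ``convergence to the positive solution of the limiting Dirichlet problem,'' whereas the paper proves only local uniform convergence $u_i \to 1$ away from $P\cup Q$ and then handles the thin strip $P(\epsilon_i^2/400)$ by a direct integral estimate using $u_i(z,\epsilon_0)\to 1$ and Cauchy--Schwarz; your version of this step should likewise not presume convergence up to $P$.
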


 \begin{rem} \label{r-p-1} \label{r-sing-b}
	\begin{itemize}
\item[(i)] We are primarily interested in the 3-dimensional case, $n=3$, in which case, in condition $(\mathbf{C})( \mathbf{b})$ above, $\ell = 2 \implies p > 6$ and $\ell = 3 \implies p > 3$.  Also, $\ell \leq n$ is automatic. Moreover, we have $\ell>\tfrac n2$.  It is unclear if one can relax the condition to allow $\ell \searrow 1$. However see Proposition \ref{p-isolated-singularities-inequality-rigidity} below.
\item[(ii)] The condition on $P$ is necessary {for Theorem \ref{t-fsing} to hold true}. One can construct examples which do not satisfy the condition on $P$ and  that \eqref{e-fsing-inequality} fails. See Section \ref{s-schwarzschild}.
\item[(iii)] {The inequality in Theorem \ref{t-fsing} follows from a smoothing procedure that combines ideas from \cite{Lee13, Miao02} that are now well-known among the experts. The rigidity statement requires studying the effects of the boundary mean curvature and nontrivial interior Ricci curvature (Lemma \ref{l-du-boundary}), which is a novel part of our work.}
\end{itemize}	
\end{rem}

In some cases, the fill-in metric $g$ is actually smooth when equality is attained in  \eqref{e-fsing-inequality}. More precisely,

\begin{prop}\label{p-fsing-rigidity}
	Let $(\Omega^3, g)$ be as in Theorem \ref{t-fsing} with $n=3$, and so that it attains equality in \eqref{e-fsing-inequality}. Then $g$ extends smoothly (after perhaps a change of smooth structure) across:
	\begin{enumerate}
		\item $P$, if $P$ is homeomorphic to union of spheres,
		\item $Q$, if $Q$ consists of isolated point singularities,
		\item $Q$, if $g$ is Lipschitz.
	\end{enumerate}
	If $\sing g$ consists of only such singularities, then $\Sigma$ is connected, $\sing g = \emptyset$, and $(\Omega, g)$ is a mean-convex handlebody with flat interior.
\end{prop}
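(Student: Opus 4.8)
The plan is to analyze each type of singularity in turn, using the rigidity conclusion of Theorem \ref{t-fsing} as the starting point: since equality holds in \eqref{e-fsing-inequality}, the metric $g$ is necessarily flat on $\Omega \setminus \sing g$ and $H_+ = H_-$ along $P$. So the task reduces to showing that a flat metric with a singularity of the prescribed type along a hypersurface $P$ (with matching mean curvatures), or at an isolated point, or along a Lipschitz locus, is actually smooth after a possible change of smooth structure. First I would handle $P$: with $g$ flat on each side of $P$ and the induced metrics and mean curvatures of $P$ agreeing from both sides, the two sides glue in a $C^{1,1}$ fashion; a result in the spirit of the regularity theory for corners in scalar-curvature gluing (Miao, Shi--Tam) shows that if additionally the scalar curvature is nonnegative (here, zero) across $P$ and the mean curvatures match, then one can improve the regularity. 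In the flat case this is cleanest: each side is locally isometric to a domain in $\mathbb{R}^3$, and the matching first and second fundamental forms of $P$ force the two Euclidean pieces to fit together along $P$ as a single smooth flat piece, provided $P$ is a union of spheres so that a global isometric gluing in $\mathbb{R}^3$ is unobstructed (this is where the topological hypothesis enters — to rule out, e.g., two flat pieces meeting along a totally geodesic torus, which is genuinely singular). I expect the argument to proceed by developing the flat structure across $P$ via the Euclidean developing map and using $H_+ = H_-$ to see the developing map extends $C^1$, hence (by flatness + the rigidity of Euclidean isometric embeddings of spheres, or directly) smoothly.

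For item (2), isolated point singularities of a flat metric: near such a point the metric is flat and smooth on a punctured ball, and one needs to show the puncture is removable. Here I would invoke the local structure of flat (i.e.\ locally Euclidean) $3$-manifolds: a flat metric on a punctured ball $B \setminus \{p\}$ whose volume density is under control near $p$ (which we have from condition $(\mathbf{C})(\mathbf{b})$, via $\ell = n = 3$) is, by the developing-map/holonomy analysis, either the standard flat ball (puncture removable) or a flat cone $\mathbb{R}^3 / \Gamma$; but the mean-convex-fill-in / nonnegative-scalar-curvature setup together with the codimension-$3$ smallness forces the holonomy to be trivial, so the metric extends smoothly across $p$ (possibly after changing the smooth structure, since a priori the completion is only a topological manifold). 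For item (3), if $g$ is Lipschitz and flat off $\sing g$, then $g \in W^{1,\infty}$; one runs an elliptic bootstrap in harmonic coordinates — Lipschitz control plus the (distributional) vanishing of scalar curvature, together with the codimension condition ensuring $Q$ has zero capacity in the relevant Sobolev sense, upgrades $g$ to $C^{1,\alpha}$ and then to smooth (flat) across $Q$. Each of these three is a local removable-singularity statement for flat metrics under a size condition plus the scalar-curvature inequality that has been promoted to an equality.

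Once $\sing g = \emptyset$ is established (i.e.\ when $\sing g$ consists only of singularities of the three listed types), $g$ is a smooth flat metric on $\Omega$ with mean-convex boundary isometric to $(\Sigma, \gamma)$ attaining equality in \eqref{e-fsing-inequality}, so $(\Omega, g)$ realizes $\Lambda(\Sigma, \gamma) = \frac{1}{8\pi}\int_\Sigma H_g\, d\sigma$. Then I would cite the rigidity of the $\Lambda$-invariant from \cite[Theorem 1.4]{M-M} (or the corresponding characterization in \cite{M-M}): the only smooth fill-ins realizing $\Lambda$ are mean-convex handlebodies with flat interior, and in particular $\Sigma$ must be connected. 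This gives the final sentence directly.

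The main obstacle is item (1), the hypersurface singularity $P$: making the flat structures on the two sides match up smoothly across $P$ is delicate because $C^0$-matching of the metric plus equality of mean curvatures gives only $C^{1,1}$ a priori, and the claimed upgrade to smoothness genuinely needs the sphere hypothesis on $P$ to exclude nontrivial gluings. I would expect to spend the bulk of the proof setting up the developing map near $P$, verifying it is $C^1$ across $P$ using $H_+ = H_-$ and the common induced metric, and then arguing that for $P$ a union of spheres the two Euclidean domains on either side are, after an ambient isometry, two pieces of one smooth flat region — invoking the Cohn-Vossen/Pogorelov rigidity of closed convex (or simply the uniqueness of isometric immersions of spheres with prescribed second fundamental form) to pin down the gluing.
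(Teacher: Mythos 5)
Your overall architecture matches the paper's exactly: invoke the rigidity output of Theorem \ref{t-fsing} (flatness off $\sing g$ and $H_+ = H_-$ along $P$), prove removability of each singularity type, then apply the rigidity of the $\Lambda$-invariant from \cite[Theorem 1.4]{M-M} to the now-smooth fill-in. The paper's proof of the removability statements is purely citation-based --- \cite[p.~1180--1181]{Miao02} for $P$, \cite{SmithYang92} for isolated points, and \cite[Theorem 6.1]{ShiTam17} for Lipschitz $Q$ --- whereas you sketch the arguments behind these citations, which is a reasonable thing to do in a blind attempt.

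There is, however, a genuine gap in your treatment of item (1). You begin, correctly, with the hypothesis that the induced metrics and the \emph{mean} curvatures $H_\pm$ (i.e.\ traces of the second fundamental forms) agree along $P$, but a few sentences later you assert that ``the matching first \emph{and second fundamental forms} of $P$ force the two Euclidean pieces to fit together.'' Equality of the full second fundamental forms is not a hypothesis and does not follow immediately: flatness on both sides together with the Gauss equation gives $\det \sff_+ = \det \sff_- = K_\gamma$ and $\tr \sff_+ = \tr \sff_-$, which pins down the eigenvalues of $\sff_\pm$ pointwise but not their principal directions. Upgrading from equal eigenvalues to equal tensors requires more (the Codazzi equations and a global argument using the sphere topology, in the spirit of Cohn--Vossen/Herglotz rigidity, or the explicit computation in \cite{Miao02}). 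This is precisely the content the cited reference supplies, and it is also where your proposed ``developing map is $C^1$'' step actually needs proof: $C^1$-matching of the developing maps amounts to matching the full $\sff$, not just its trace. You correctly flag this as the delicate point, but your write-up treats it as established when it is not. Your item (2) reasoning is essentially right, though the force behind trivial holonomy is the $C^0$ (indeed $W^{1,p}$, $p>6$) regularity of $g$ across the puncture (giving volume density $1$), rather than codimension per se; and item (3) is a plausible sketch of what \cite[Theorem 6.1]{ShiTam17} does.
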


\begin{proof}
	If equality is attained in \eqref{e-fsing-inequality}, then $H_+ = H_-$ on $P$ by Theorem \ref{t-fsing}, and $g$ is flat outside $P$. By \cite[p. 1180--1181]{Miao02}, $g$ extends smoothly across $P$   that are spheres with $H_+ = H_-$. Likewise, $g$ is smooth across $Q$ which consists of  isolated points by \cite{SmithYang92}, and also smooth across $Q$ if $g$ is   by \cite[Theorem 6.1]{ShiTam17}.
	
	If all the singularities in $\sing g$ are as above, then $(\Omega, g) \in \FF(\Sigma, \gamma)$ and the result follows verbatim from \cite[Theorem 1.4]{M-M}.
\end{proof}

To prove   Theorem \ref{t-fsing}, we need to approximate $g$ by smooth metrics with suitable properties.  Let $\sing g=P\cup Q$ as in the theorem, assuming that $Q=Q_1$ with $ p:=p_1$, $\ell:=\ell_1$.  The general case can be proved similarly. Let $\mathfrak{b}$ be a smooth background metric on $\Omega$. Cover $Q$ by finitely many coordinate neighborhood $U_1,\dots, U_N$ with $\ol U_k$ is in the interior of $\Omega$ and  $P\cap (   \cup_{k=1}^N\ol U_k)=\emptyset$. Let $U_{N+1}$ be an open set in $\Omega$ so that $Q\cap \ol U_{N+1}=\emptyset$ and $\Omega=\cup_{k=1}^{N+1}U_k$. Let $\psi_k$ be a partition of unity with $ \mathrm{supp}(\psi_k)\subset U_k$. By \cite[Lemma 3.1]{Lee13}, for each $1\le k\le N$, there is a smooth function $\e\ge\rho_k\ge0$ in $U_k$ such that for small $\e>0$:
\be\label{e-cutoff-1}
  \begin{array}{c}
    \rho_k= \e \text{ in } Q(\e/2)\cap U_k, \text{ and } \rho_k=   0 \text{ in } U_k\setminus Q(\e); \\
    |\p\rho_k| + \e |\p^2\rho_k|\le   C;  \\
  \end{array}
\ee
for some $C$ independent of $\e$ and $k$. Here $\p\rho_k$ and $\p^2\rho_k$ are   derivatives with respect to the Euclidean metric.

\begin{lemma}\label{l-smoothing}
There is $\e_0>0$ such that if $\e_0\ge\e>0$, there is a smooth metric $g_\e$ on $\Omega$ such that
\begin{enumerate}
  \item [(i)] $g_\e=g$ outside $P(\e)\cup Q(\e)$, and $c^{-1} g_\eps \leq  g \leq cg_\e$ for some constant $c>0$ independent of $\e$;

  \item [(ii)] the $W^{1,p}$ norm with respect to $\mathfrak{b}$ of  $g_\e$  in  $Q(\e)$ is less than $c$, for some constant $c>0$ independent of $\e$;
  \item [(iii)]  $|\nabla_{\mathfrak{b}}g_\e|\le c$ in $P(\e)$, and for $z\in P$,
\be\label{e-scalar-1}
	\begin{array}{ll}
    	|R(g_\e)|(z,t) \le c & \text{ for } \frac{\e^2}{100} <|t|\le \e; \\
		-c +\e^{-2}(H_-(z)-H_+(z)) \\
		\qquad \qquad \leq R(g_\e)(z,t) \leq c\e^{-2} & \text{ for } -\frac{\e^2}{400}  <t\le \frac{\e^2}{400}; \\
		-c \leq R(g_\e)(z,t) \leq c\eps^{-2} & \text{ for } \frac{\e^2}{400} < |t| \le \frac{\e^2}{100};
  \end{array}
\ee
for some constant $c>0$ independent of $\e$.
\end{enumerate}
\end{lemma}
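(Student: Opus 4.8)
The plan is to leave $g_\e = g$ outside $P(\e) \cup Q(\e)$ — so (i)'s ``$g_\e=g$ off $P(\e)\cup Q(\e)$'' and the fact that $\p\Omega$ and its mean curvature are untouched both hold by construction — and to modify $g$ near $P$ and near $Q$ independently, which is legitimate because $P$ and $Q$ are disjoint compact subsets of $\Omega \setminus \p\Omega$, so $P(\e)$ and $Q(\e)$ are disjoint and interior for $\e$ small. In all cases $g_\e$ will be, locally, an average of $g$ over balls of radius $o(1)$, so the uniform equivalence $c^{-1} g_\e \le g \le c g_\e$ in (i) (in particular positive‑definiteness of $g_\e$) will follow from the uniform continuity and two‑sided boundedness of the $C^0$ metric $g$ on the compact manifold $\bar\Omega$.

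Near $Q$ (take $Q = Q_1$; the general case is identical component by component), I would define $g_\e$ on the charts $U_1,\dots,U_N$ to be the mollification of $g$ at the \emph{spatially varying} scale $\rho_k(x)$ from \eqref{e-cutoff-1} — this is smooth because $|\p\rho_k| \le C$ uniformly, and reduces to $g$ wherever $\rho_k \equiv 0$, i.e.\ outside $Q(\e)$ — then patch with the partition of unity $\{\psi_k\}$. Since $g \in W^{1,p}$ on a fixed neighbourhood of $Q$, the standard $W^{1,p}$ estimate for (variable‑scale) mollification gives $\|g_\e\|_{W^{1,p}(Q(\e))} \le C$ with $C$ independent of $\e$, which is (ii); this part is essentially \cite[Lemma 3.1]{Lee13} plus a routine estimate.

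The substance is near $P$. Condition $(\mathbf{C})(\mathbf{a})$ already supplies Fermi‑type coordinates $(t,z) \in (-a,a) \times P$ with $g = dt^2 + g_t$, where $g_t := g_+(t,\cdot)$ for $t \ge 0$ and $g_-(t,\cdot)$ for $t \le 0$; the map $t \mapsto g_t$ is continuous — and its tangential derivatives are continuous in $t$ — because $g_-(\cdot,0) = g_+(\cdot,0)$, while $\p_t g_t$ jumps across $t = 0$ by $2(k_+ - k_-)$, with $k_\pm$ the second fundamental forms of $P$. I would fix an even nonnegative mollifier, form the $t$-mollification $g^{\mathrm{mol}}_t$ of $s\mapsto g_s$ at scale $\sim \e^2$, choose a cutoff $\chi(t)$ equal to $1$ near $t=0$ and to $0$ for $|t| \gtrsim \e^2$ with $|\chi'| \le C\e^{-2}$, $|\chi''| \le C\e^{-4}$, and set $g_\e := dt^2 + \big(\chi\, g^{\mathrm{mol}}_t + (1-\chi)\, g_t\big)$. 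This is smooth (near $t=0$ it is genuinely $dt^2 + g^{\mathrm{mol}}_t$; elsewhere all ingredients are smooth) and equals $g$ for $|t|\gtrsim \e^2$, in particular for $|t| \ge \e$. To obtain \eqref{e-scalar-1} I would insert this into the Gauss--Riccati identity $R(dt^2+g_t) = R_{g_t} - 2\p_t H_t - |k_t|^2 - H_t^2$, with $k_t = \tfrac12\p_t g_t$ and $H_t = \tr_{g_t} k_t$: the terms $R_{g_t}$, $|k_t|^2$, $H_t^2$ stay $O(1)$ (the tangential derivatives of $g_t$ are continuous in $t$, and mollification preserves the Lipschitz bound on $t\mapsto g_t$), so only $-2\p_t H^\e_t$ can be unbounded, and differentiating under the mollifier one gets $-2\p_t H^\e_t = O(1) + 2(H_- - H_+)\,\rho_{\e^2}(t)$ on $\{\chi \equiv 1\}$ (using $[\p_t g_t] = 2(k_+ - k_-)$), while on the transition annulus $\{\e^2/400 < |t| \le \e^2/100\}$ one checks $\p_t^2 g_\e = O(1)$ — the $\chi''$ and $\chi'$ contributions being beaten by the $O(\e^4)$ and $O(\e^2)$ $C^2$-closeness of $g^{\mathrm{mol}}_t$ to $g_t$ on that region, which is disjoint from $t=0$ — and on $\{\e^2/100 < |t| \le \e\}$, $g_\e = g$ has bounded curvature. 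Since the mollifier is nonnegative and $H_- - H_+ \ge 0$ by $(\mathbf{C})(\mathbf{a})(\mathrm{ii})$, collecting these bounds yields exactly \eqref{e-scalar-1}, the particular constants $1/100$, $1/400$ being artefacts of the chosen mollifier and cutoff.

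The main obstacle is to organize this last computation so that the \emph{negative} part of the $O(\e^{-2})$ curvature blowup near $P$ is in fact only $O(1)$, the sole genuinely $\e^{-2}$-size contribution being the \emph{nonnegative} term $\e^{-2}(H_- - H_+)$; the inequality $H_+ \le H_-$ is precisely what makes this possible, and it is the whole point of condition $(\mathbf{C})$ — it is also what will let the $g_\e$ be corrected, via a small conformal change, to metrics with nonnegative scalar curvature in the proof of Theorem \ref{t-fsing}. Everything else — the variable‑scale mollification near $Q$, the uniform metric equivalence, and the ``$g_\e = g$ off $P(\e)\cup Q(\e)$'' assertion — is either standard or immediate from the construction.
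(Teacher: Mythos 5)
Your proposal is correct and follows the same architecture as the paper's proof: near $Q$ the variable-scale mollification with the Lee cutoffs $\rho_k$ and the partition of unity, near $P$ the one-dimensional $t$-mollification at scale $\sim\e^2$ controlled via the Gauss--Riccati identity. The only real difference is that where you carry out the $P$-smoothing computation explicitly (even mollifier, cutoff, tracking that the sole $O(\e^{-2})$ term in $R(g_\e)$ comes from $-2\p_t H^\e_t$ and inherits its sign from $H_-\ge H_+$), the paper simply cites this as a black box from \cite[Prop.\ 3.1]{Miao02} and cites \cite[Lemma 4.1]{ShiTam17} for the $Q$-side estimates; your unpacking is the standard argument behind those references.
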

\begin{proof} In the following $c>0$ always denote  a constant independent of $\e$. Its meaning may vary from line to line. First assume $\e_0>0$ be small enough so that $P(2\e_0)\cap Q(2\e_0)=\emptyset$ and $P(2\e_0)\cup Q(2\e_0)$ is disjoint from the boundary $\Sigma$.

By \cite[Prop. 3.1]{Miao02}, by choosing a possible smaller $\e_0>0$, then for any $0<\e\le \e_0$,   one can find a metric $\wt g_{\e}$    such that $\wt g_{\e}=g$ outside $P(\e)$, $\wt g_\e$ is smooth in $P(\e_0)$,  $|^\mathfrak{b}\nabla \wt g_{\e}|_\mathfrak{b}\le c$ in $P(\e_0)$ such that the scalar curvature $R(\wt g_\e)$ in $P(\e)$ satisfies \eqref{e-scalar-1} with $\widetilde{g}_\eps$ in place of $g_\eps$ for all $z\in \Sigma$.

Next we want to modify  $\wt g_\e$  near $Q$. Let $\psi_k$ be the partition of unity mentioned before the lemma. Let $g^{k}:=\psi_k\wt g$ and for $1\le k\le N$,
\begin{equation} \label{e-sing-b-approx-0}
	(g^{k}_{\e})_{ij}(x) := \int_{\R^n}\wt g_{ij}^k(x-\lambda\rho_k(x)y) \varphi_{\R^n}(y) \, dy.
\end{equation}
Here
\begin{equation} \label{e-sing-n-dim-mollifier}
	\varphi : \R^n \to [0,\infty) \text{ is smooth}, \spt \varphi \subset B_{1/2}^n(0), \; \int_{\R^n} \varphi = 1,
\end{equation}
and $\lambda>0$ is a constant independent of $\eps$ and $k$, which is small enough for $x \mapsto x - \lambda \rho_k(x) y$ from \eqref{e-sing-b-approx-0} to be a diffeomorphism.

Finally, define
\begin{equation} \label{e-sing-approx-1}
	g_{\eps} := \sum_{k=1}^N \wt g^{k}_{\eps} + \psi_{N+1}\wt g.
\end{equation}

We have, e.g., from \cite[Lemma 4.1]{ShiTam17}, that  for a possibly smaller $\eps_0>0$, then for $\eps \in (0, \eps_0]$,
\begin{equation} \label{e-sing-away}
   \begin{array}{c}
    g_\eps = \wt g  \text{ except on } Q(\eps),\\
    c^{-1} g_\eps \leq  \wt g \leq c g_\eps \text{ on } \Omega\\
    \text{$W^{1,p}$ norm of  $g_\e$  in  $Q(\e)$ is less than $c$.}
   \end{array}
\end{equation}
for a constant $c > 0$ that is independent of $\eps > 0$. Combining this with the properties of $\wt g$, we see that $g_\e$ satisfies the required properties of the lemma.
\end{proof}

In order to understand the effect of the Ricci tensor and $H_--H_+$, we also introduce the following. Fix $x_0 \in \Omega \setminus (\Sigma \cup \sing g)$ and $r > 0$ small enough that $5r < \operatorname{inj}_g (x_0)$ (the injectivity radius of $g$ at $x_0$) and $B^g_{5r}(x_0) \cap (\Sigma \cup \sing g) = \emptyset$.

Choose  $\eps_0 > 0$ be sufficiently small in the above so that
\begin{equation} \label{e-sing-avoidance}
	(P(\eps_0) \cup Q(\eps_0)) \cap B^g_{4r}(x_0) = \emptyset.
\end{equation}
Let $\chi : [0,\infty) \to [0,\infty)$ be a smooth nonincreasing function with
\begin{equation} \label{e-chi}
	\chi = 1 \text{ on } [0,1], \; \chi = 0 \text{ on } [2,\infty), \; |\chi'|^2 \leq C \chi,
\end{equation}
with $C$ some absolute constant. Set:
\begin{equation} \label{e-perturbation-tensor}
	h := \chi(\dist_g(\cdot, x_0)/r) \mathring \Ric(g_\eps) = \chi(\dist_g(\cdot, x_0)/r) \mathring \Ric(g);
\end{equation}
because $g_\e=g$ outside $P(\e)\cup Q(\e)$. Here $\mathring{\Ric}(g)$ is the traceless part of the Ricci tensor of $g$, etc.

For $\tau > 0$, define
\[ g_{\eps,\tau} := g_\eps - \tau h. \]
If $\tau$ is small enough, independent of $\eps$, then  $g_{\eps,\tau}$ also satisfies Lemma \ref{l-smoothing}  with $g_\eps$ replaced by $g_{\eps,\tau}$ with constants $c$ being independent of $\e, \tau$ provided they are small enough.  Observe that the Sobolev constants in the Sobolev inequality for $g_\e$ and $g_{\e,\tau}$  are uniformly bounded.

Let $\rho_k$ be as in  \eqref{e-cutoff-1} and $\psi_k$ be the partition of unity on $U_1,\dots, U_{N+1}$ mentioned  above.  Let
\[ \eta := \eps^{-1} \sum_{k=1}^N \rho_k \psi_k. \]
Then $\eta=1$ on $Q(\eps/2)$ and $\eta=0$ outside $Q(\eps)$.
\begin{lemma} \label{l-scalar-est}
	The following statements hold.
	\begin{enumerate}
		\item[(i)] There is $\a>0$ such that if   $\tau_0, \eps_0>0$ are small enough so that if $\eps \in (0,\eps_0]$, $\tau \in (0,\tau_0]$, $\e^\a\le \tau^2$, then for any Lipschitz function $u$ on $\Omega$:
			\begin{align}\label{e-scalar-est-1}
				& \int_\Omega R(g_{\eps,\tau}) u^2 \, \dvol_{g_{\eps,\tau}}\\
				& \geq \int_{\Omega \setminus(P(\e)\cup   Q(\eps))}   R(g) u^2 \, \dvol_{g_{\eps,\tau}}    + c^{-1}\e^{-2} \int_{P (\frac1{400}\eps^2)} (H_- - H_+) u^2 \, \dvol_{g_{\eps,\tau}} \nonumber \\
				&  + \tfrac12 \tau \int_{B^g_{4r}(x_0)} \chi |\mathring{\Ric}(g)|^2 u^2 \, \dvol_{g_{\eps,\tau}}       - c \tau^\frac32  \left[ \int_{Q(\eps) \cup B^g_{4r}(x_0)} u^{\tfrac{2n}{n-2}} \, \dvol_{g_{\eps,\tau}} \right]^{\tfrac{n-2}{n}} \nonumber  \\
				&   - c\tau^\frac12 \int_\Omega |\nabla_{g_{\eps,\tau}} u|^2 \, \dvol_{g_{\eps,\tau}}, \nonumber
			\end{align}
		for $c>0$ which is  independent of $\eps, \tau$.
		\item[(ii)] There is $\beta>0$ such that if   $\tau_0, \eps_0>0$ are small enough so that if $\eps \in (0,\eps_0]$, $\tau \in (0,\tau_0]$, $\e^\beta\le \tau^2$, then for  any Lipschitz function $u \geq 0$ on $\Omega$ with $u|_{\p \Omega} = 0$,
		\begin{equation} \label{e-scalar-est-2}
		\int_\Omega R(g_{\eps,\tau}) u \, \dvol_{g_{\eps,\tau}} \geq -c \tau  \left[ \int_\Omega |\nabla_{g_{\eps,\tau}} u|^2 \, \dvol_{g_{\eps,\tau}} \right]^{\tfrac12},
		\end{equation}
		for $c>0$ which is independent of $\eps, \tau$.
	\end{enumerate}
\end{lemma}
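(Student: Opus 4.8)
The statement to prove is Lemma \ref{l-scalar-est}, which provides the two key scalar-curvature integral estimates needed to run the conformal-deformation argument in Theorem \ref{t-fsing}. I will describe my plan for part (i); part (ii) is a simpler variant of the same scheme.

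\textbf{Setup and strategy.} The plan is to compute $R(g_{\eps,\tau})$ by perturbing off $R(g_\eps)$, using the standard linearization of scalar curvature under the perturbation $g_\eps \mapsto g_\eps - \tau h$, where $h = \chi(\dist_g(\cdot,x_0)/r)\mathring\Ric(g)$ is supported in $B^g_{4r}(x_0)$ away from $\sing g$. First I would recall the expansion
\[
	R(g_\eps - \tau h) = R(g_\eps) + \tau\big( \divg_{g_\eps}\divg_{g_\eps} h - \Delta_{g_\eps}(\tr_{g_\eps} h) - \la \Ric(g_\eps), h\ra \big) + O(\tau^2)|\p^2 h, \p h\, \p h|,
\]
and then integrate against $u^2\,\dvol_{g_{\eps,\tau}}$. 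On the support of $h$ we have $g_\eps = g$, so $\Ric(g_\eps) = \Ric(g)$ there; integrating by parts moves the derivatives off $h$ onto $u^2$. The term $-\tau\la\Ric(g),h\ra = -\tau\chi|\mathring\Ric(g)|^2$ up to a trace term (note $\tr h = \chi\,\tr\mathring\Ric(g) = 0$ since $\mathring\Ric$ is trace-free, which kills the $\Delta(\tr h)$ term entirely), so the linear-in-$\tau$ contribution on $B^g_{4r}(x_0)$ is $+\tau\int\chi|\mathring\Ric(g)|^2 u^2 + (\text{integration-by-parts terms with }\nabla u)$; choosing constants, one keeps $\tfrac12\tau\int\chi|\mathring\Ric(g)|^2 u^2$ and absorbs the rest into the $c\tau^{1/2}\int|\nabla u|^2$ and $c\tau^{3/2}[\int u^{2n/(n-2)}]^{(n-2)/n}$ error terms via Young's inequality and the uniform Sobolev inequality for $g_{\eps,\tau}$.

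\textbf{Handling the singular regions $P(\eps)$ and $Q(\eps)$.} Away from $B^g_{4r}(x_0)$ we have $g_{\eps,\tau} = g_\eps$, so I would split $\Omega = (\Omega\setminus(P(\eps)\cup Q(\eps))) \cup P(\eps)\cup Q(\eps)$. On the complement, $R(g_\eps) = R(g)$ exactly, giving the first term on the right. On $P(\eps)$, I use the curvature bounds \eqref{e-scalar-1} from Lemma \ref{l-smoothing}: the dominant contribution comes from the slab $|t|\le \tfrac1{400}\eps^2$ where $R(g_\eps) \ge -c + \eps^{-2}(H_- - H_+)$, yielding the $+c^{-1}\eps^{-2}\int_{P(\eps^2/400)}(H_--H_+)u^2$ term; the outer shells of $P(\eps)$ contribute at worst $-c\eps^{-2}\cdot|P(\eps)\setminus P(\eps^2/400)|$, but that region has $g$-volume $O(\eps)$ (it is $\approx (\eps^2/100,\eps)\times P$ in the $t$-coordinate, wait — actually volume $O(\eps)$), so the total negative contribution there is $O(1)\cdot\|u\|_\infty^2$... more carefully it is $\eps^{-2}\cdot O(\eps) = O(\eps^{-1})$, which is \emph{not} bounded; here is where the condition $\eps^\alpha \le \tau^2$ enters — one shows the genuinely bad piece is controlled, and I would need to recheck which shell actually carries the $\eps^{-2}$ bound versus the $O(1)$ bound in \eqref{e-scalar-1}, namely $|R(g_\eps)|\le c$ for $\eps^2/100 < |t|\le\eps$ so that shell is fine, and only $\eps^2/400<|t|\le\eps^2/100$ has the $\eps^{-2}$ bound over a set of volume $O(\eps^2)$, giving $O(1)$ — good. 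On $Q(\eps)$ I use the $W^{1,p}$ bound on $g_\eps$ from Lemma \ref{l-smoothing}(ii): $R(g_\eps)$ involves two derivatives but one integrates by parts to land in $L^{p/?}$... actually the clean route is to write $\int_{Q(\eps)} R(g_\eps)u^2$, integrate by parts so only first derivatives of $g_\eps$ (bounded in $L^p$) and derivatives of $u^2$ appear, then use Hölder with the codimension/volume bound $|Q_k(\eps)|_g = O(\eps^{\ell_k})$ and the Sobolev embedding for $u$; the constraint $\ell_k > (\tfrac2n - \tfrac1{p_k})^{-1}$ is exactly what makes the resulting power of $\eps$ nonnegative, so this term is $o(1)$ or absorbed into $c\tau^{1/2}\int|\nabla u|^2$ after using $\eps^\alpha\le\tau^2$.

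\textbf{Main obstacle.} The hard part will be the careful bookkeeping on $P(\eps)$ and $Q(\eps)$: making sure every error term that is a priori blowing up like a negative power of $\eps$ is in fact multiplied by a volume factor that beats it, and then trading any leftover $\eps$-powers against $\tau$-powers using the scheduling hypothesis $\eps^\alpha\le\tau^2$ (choosing $\alpha$ large enough at the end). In particular the $\tau^2$-order remainder in the scalar curvature expansion involves $|\p^2 h|$ and $|\p h|^2$, which are bounded since $h$ is smooth and $\eps$-independent, so that piece is a clean $O(\tau^2)\int u^2 \le$ (Sobolev) $O(\tau^2)[\int u^{2n/(n-2)}]^{(n-2)/n}|B_{4r}|^{2/n} \le c\tau^{3/2}[\cdots]^{(n-2)/n}$ for $\tau$ small. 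For part (ii) the argument is the same but cruder: one does not need the $\mathring\Ric$ gain, so $h=0$ essentially (or one just uses $\tau$ as a small parameter directly), tests against $u\ge0$ with $u|_{\p\Omega}=0$, integrates by parts once, and the $\tau$-linear term plus the singular-region estimates give the single error term $-c\tau[\int|\nabla u|^2]^{1/2}$ after one application of Cauchy--Schwarz. I would then fix $\beta$ (playing the role of $\alpha$) large enough to absorb the $\eps$-powers, completing the proof.
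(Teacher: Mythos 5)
Your proposal follows essentially the same approach as the paper: same decomposition of $\Omega$ into $P(\eps)$, $Q(\eps)$, $B^g_{4r}(x_0)$ and the complement, the Brendle--Marques linearization with the traceless-$h$ observation killing the $\Delta(\tr h)$ term, integration by parts against the $W^{1,p}$ and codimension bounds on $Q(\eps)$, the curvature slab bounds from Lemma \ref{l-smoothing}(iii) on $P(\eps)$, and the scheduling $\eps^\alpha\le\tau^2$. One small correction to your bookkeeping on the middle shell $\eps^2/400<|t|\le\eps^2/100$: the relevant \emph{lower} bound in \eqref{e-scalar-1} there is $-c$ (the $c\eps^{-2}$ is only an upper bound), so the negative contribution is $O(\eps^2)$ outright and no $\eps^{-2}$-times-volume cancellation is needed.
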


\begin{proof}
First we prove \eqref{e-scalar-est-1}. Since $\eta=0$ outside $Q(\e)$ and $g_\e=g$ outside $P(\e)\cup Q(\e)$.  We have
\begin{align} \label{e-scalar-est-1-integral}
	& \int_\Omega R(g_{\e,\tau}) u^2 \, \dvol_{g_{\e,\tau}} \\
	& = \int_{Q(\e)} \eta R(g_{\eps,\tau}) u^2 \, \dvol_{g_{\eps,\tau}} + \int_{\Omega\setminus Q(\e)} (1-\eta) R(g_{\eps,\tau})u^2 \, \dvol_{g_{\eps,\tau}} \nonumber \\
	& = \underbrace{\int_{P(\eps)}   R(g_{\eps}) u^2 \, \dvol_{g_{\eps}}}_{=: \, \mathbf{I}_A} + \underbrace{\int_{Q(\eps)} \eta R(g_{\eps}) u^2 \, \dvol_{g_{\eps}}}_{=: \, \mathbf{I}_B} + \underbrace{\int_{B^g_{4r}(x_0)}( R(g_{\eps,\tau})-R(g) )u^2 \, \dvol_{g_{\eps,\tau}}}_{=: \, \mathbf{II}} \nonumber \\
	& \qquad +  \int_{\Omega \setminus (P(\e)\cup Q(\eps)) }   R(g) u^2 \, \dvol_{g}. \nonumber
\end{align}
Here we have used the fact that $ P(\e)\cup Q(\e)$ is disjoint from $B^g_{4r}(x_0)$.

First, we have
\begin{align} \label{e-scalar-est-1-IA}
	\mathbf{I}_A
		& = \int_{P(\eps)}   R(g_{\eps}) u^2 \, \dvol_{g_{\eps}} \\
		& \geq -c \int_{P(\eps)} u^2 \, \dvol_{g_{\eps}}    + \eps^{-2}\int_{P(\eps^2/400)} (H_- - H_+)    u^2 \, \dvol_{g_{\eps}} \nonumber \\
		& \geq -c \eps^{\tfrac{2}{n}} \left[ \int_{P(\eps)} u^{\tfrac{2n}{n-2}} \, \dvol_{g_{\eps}} \right]^{\tfrac{n-2}{n}} + \e^{-2}\int_{P(\frac1{400}\eps^2)} (H_- - H_+) u^2 \, \dvol_{g_{\eps}} \nonumber,
\end{align}
where we have used Lemma \ref{l-smoothing}(iii).

Next, we have
\begin{equation} \label{e-scalar-est-1-IB-split}
	\mathbf{I}_B = \int_{Q(\eps)} \eta R(g_{\eps}) \, \dvol_{g_\eps} = \eps^{-1}\int_{Q(\e)} \sum_{k=1}^N \rho_k \psi_k R(g_\eps) u^2 \, \dvol_{g_\eps}.
\end{equation}
We estimate each term separately. Recall that, in each coordinate chart $U_k$, $k = 1, \ldots, N$,
\[ R(g_\eps) = (g_\eps)^{-1} * \partial \Gamma_\eps + (g_\eps)^{-1} * \Gamma_\eps * \Gamma_\eps, \]
where the partial derivatives are with respect to Euclidean coordinates, $\Gamma_\eps$ denote the Christoffel symbols of $g_\eps$ in these coordinates, and the ``$*$'' notation above stands for linear combinations of the form $g^{ij}_\eps \partial_k[(\Gamma_\eps)^s_{pq}]$, $g^{ij}_\eps (\Gamma_\eps)^s_{pq} (\Gamma_\eps)^w_{uv}$, respectively. Using this expression, we can estimate each term of \eqref{e-scalar-est-1-IB-split} by integrating by parts.

For $\kappa > 0$, we have:
\begin{align} \label{e-scalar-est-1-IB-term}
	  &\eps^{-1} \int_{Q(\eps)} \rho_k \psi_k R(g_\eps) u^2 \, \dvol_{g_\eps} \\
	& = \eps^{-1} \int_{U_k \cap Q(\eps)} \rho_k \psi_k \big[ (g_\eps)^{-1} * \partial \Gamma_\eps + (g_\eps)^{-1} * \Gamma_\eps * \Gamma_\eps \big] u^2 \, \dvol_{g_\eps} \nonumber \\
	& \geq -c \int_{U_k \cap Q(\eps)} \big[ (\eps^{-1} |\partial g_\eps| + |\partial g_\eps|^2) u^2 + u |\nabla_{g_\eps} u| |\partial g_\eps| \big] \, \dvol_{g_\eps} \nonumber \\
	& \geq - c \left[ \eps^{-1 + (\tfrac{2}{n} - \tfrac{1}{p}) \ell} + (1+\kappa^{-1}) \eps^{(\tfrac{2}{n} - \tfrac{2}{p})\ell} \right] \left[ \int_{U_k \cap Q(\eps)} u^{\tfrac{2n}{n-2}} \, \dvol_{g_\eps} \right]^{\tfrac{n-2}{n}} \nonumber \\
	& \qquad - c \kappa \int_{U_k \cap Q(\eps)} |\nabla_{g_\eps} u|^2 \, \dvol_{g_\eps} \nonumber,
\end{align}
where we have used  the generalized H\"older inequality,    \eqref{e-sing-away}, $|\Gamma_\eps| = O(|\partial g_\eps|)$,   and the fact that $Q$ has co-dimension at least $\ell$  and Lemma \ref{l-smoothing}(ii). Here and below, $c > 0$ is a constant independent of $\eps$, $\tau$, $\kappa$. The meaning of $c$ may change from line to line. Summing \eqref{e-scalar-est-1-IB-term} over $k= 1, \ldots, N$, we get
\begin{align}\label{e-scalar-est-1-IB}
	\mathbf{I}_B
 \geq - c \tau^\frac12 \int_{  Q(\eps)} |\nabla_{g_\eps} u|^2 \, \dvol_{g_\eps}    - c  \tau^\frac32 \left[ \int_{Q(\eps)} u^{\tfrac{2n}{n-2}} \, \dvol_{g_\eps} \right]^{\tfrac{n-2}{n}}
\end{align}
if   $\e^\a\le \tau^2$, $\kappa=\tau^\frac12$, where $\a$ is   of $(\tfrac{2}{n} - \tfrac{2}{p})\ell$ and $ -1 + (\tfrac{2}{n} - \tfrac{1}{p })$ which is positive.

We now proceed to estimate $\mathbf{II}$ from \eqref{e-scalar-est-1-integral}. To do so, let's recall:

\begin{lemma*}[{\cite[Proposition 4]{BrendleMarques11}}]
	Let $(\Omega^n,  g)$ be a smooth Riemannian manifold. If $\bar g=g+h$ with $ |h|_{ g}\le \frac12$, then the scalar curvatures satisfy:
	\bee
	R(\bar g) - R(g) = - \divg_g(\divg_g(h)) +  \Delta_g \tr_g h - \la h, \Ric(g) \ra_g + F
	\eee
	where $|F|\le C\lf( |\nabla h|^2+|h|_g|\nabla^2h|_g+|\Ric(g)||h|^2_g\ri)$ and $C$ depends only on $n$. Here $\nabla$ is the covariant derivative with respect to $g$.
\end{lemma*}

Hence, for small $\tau$, we have
\[ R(g_{\eps,\tau}) = R(g) + \tau   \div_g(\div_g(h)) + \tau \la h,\Ric(g)\ra_g + E_1, \]
on $B^g_{4r}(x_0)$ where $|E_1|\le c\tau^2$, with $c$ independent of $\eps$, $\tau$. Moreover,
\[ \dvol_{g_{\eps,\tau}} = (1 + E_2) \, \dvol_g \]
on $B^g_{4r}(x_0)$, where $|E_2| \leq c \tau^2$, with $c$ independent of $\eps$, $\tau$, because the perturbation tensor $h$ from \eqref{e-perturbation-tensor} is traceless. Moreover, since
\[ \mathring{\Ric}(g) = \Ric(g) - \tfrac{1}{n} R(g) = (\Ric(g) - \tfrac{1}{2} R(g) g) + \tfrac{n-2}{2n} R(g) g, \]
and the first term of the right hand side is divergence-free, we have
\[ \divg_g \divg_g h = \divg_g \mathring{\Ric}(g)(\nabla_g \chi, \cdot) + \tfrac{n-2}{2n} \divg_g (\chi \nabla_g R(g)). \]
Using the fact that $h$, its derivatives, and $\Ric(g)$ are bounded by a constant independent of $\eps$, $\tau$ in $B^g_{4r}(x_0)$, for any $\kappa \in (0, 1)$ we have
\begin{align} \label{e-scalar-est-1-II}
	\mathbf{II}
		& = \int_{B_{4r}^g(x_0)} (R(g_{\eps,\tau})-R(g)) u^2 \, \dvol_{g_{\eps,\tau}} \\
		& = \int_{B_{4r}^g(x_0)} (R(g) + \tau \divg_g \divg_g h + \tau \la h, \Ric(g) \ra_g + E_1) (1 + E_2) u^2 \, \dvol_g \nonumber \\
		& \geq \tau \int_{B^g_{4r}(x_0)} (\divg_g \div_g h + \la h, \Ric(g) \ra_g) u^2 \, \dvol_g \nonumber - c \tau^2 \int_{B^g_{4r}(x_0)} u^2 \, \dvol_g \nonumber \\
		& \geq \tau \int_{B^g_{4r}(x_0)} \chi |\mathring{\Ric}(g)|^2 u^2 \, \dvol_g - c \tau^2 \int_{B^g_{4r}(x_0)} u^2 \, \dvol_g\nonumber \\
		& \qquad - c \tau \int_{B^g_{4r}(x_0)} u |\nabla_g u| (|\nabla_g \chi| |\mathring{\Ric}(g)| + \chi |\nabla_g R(g)|) \, \dvol_g \nonumber \\
		& \geq (1 - c\kappa) \tau \int_{B^g_{4r}(x_0)} \chi |\mathring{\Ric}(g)|^2 u^2 \, \dvol_g - \kappa^{-1} \tau \int_{B^g_{4r}(x_0)} |\nabla_{g_{\eps,\tau}} u|^2 \, \dvol_{g_{\eps,\tau}} \nonumber \\
		& \qquad - c(\tau^2 + \kappa \tau) \int_{B^g_{4r}(x_0)} u^2 \, \dvol_{g_{\eps,\tau}}, \nonumber\\
& \geq \frac\tau 2  \int_{B^g_{4r}(x_0)} \chi |\mathring{\Ric}(g)|^2 u^2 \, \dvol_g - \tau^\frac12 \int_{B^g_{4r}(x_0)} |\nabla_{g_{\eps,\tau}} u|^2 \, \dvol_{g_{\eps,\tau}} - c\tau^\frac32 \int_{B^g_{4r}(x_0)} u^2 \, \dvol_{g_{\eps,\tau}}, \nonumber
\end{align}
if we take $\kappa=\tau^\frac12$,   where $c$ continues to denote a constant independent of $\eps$, $\tau$, provided $\tau_0$ is small enough.

Altogether, \eqref{e-scalar-est-1-integral}, \eqref{e-scalar-est-1-IA}, \eqref{e-scalar-est-1-IB}, \eqref{e-scalar-est-1-II} imply \eqref{e-scalar-est-1}.

Now let's prove \eqref{e-scalar-est-2}. As before,  $ R(g)\ge0$ outside $\sing g$, $u\ge0$, so
\begin{align} \label{e-scalar-est-2-integral}
	  \int_\Omega R(g_{\eps,\tau}) u \, \dvol_{g_{\eps,\tau}}
	& \geq \underbrace{\int_{P(\eps)}   R(g_\eps) u \, \dvol_{g_\eps}}_{=: \, \mathbf{IV}_A} + \underbrace{\int_{Q(\eps)} \eta R(g_\eps) u \, \dvol_{g_\eps}}_{=: \, \mathbf{IV}_B} \nonumber \\
	& \qquad   + \underbrace{\int_{B^g_{4r}(x_0)}( R(g_{\eps,\tau})-R(g)) u \, \dvol_{g_{\eps,\tau}}.}_{=: \, \mathbf{V}}
\end{align}
Arguing as for $\mathbf{I}_A$ in \eqref{e-scalar-est-1-IA}, we can estimate
\begin{equation} \label{e-scalar-est-2-IVA}
	\mathbf{IV}_A \geq -c \eps^{\tfrac{n+2}{2n}} \left[ \int_{P(\eps)} u^{\tfrac{2n}{n-2}} \, \dvol_{g_\eps} \right]^{\tfrac{n-2}{2n}} \geq -c \eps^{\tfrac{n+2}{2n}} \left[ \int_{\Omega} |\nabla_{g_\eps} u|^2 \, \dvol_{g_\eps} \right]^{\tfrac12};
\end{equation}
we used the Sobolev inequality for functions with value zero boundary value on $\Omega$, together with Lemma \ref{l-smoothing}(i).

Arguing as for $\mathbf{I}_B$ in \eqref{e-scalar-est-1-IB-split}, \eqref{e-scalar-est-1-IB-term}, we have, for each $k = 1, \ldots, N$:
\begin{align} \label{e-scalar-est-2-IVB-prelim}
	& \eps^{-1} \int_{Q(\eps)} \rho_k \psi_k R(g_\eps) u \, \dvol_{g_\eps} \\
	& \geq -c \int_{U_k \cap Q(\eps)} \big[ (\eps^{-1} |\partial g_{\eps}| + |\partial g_{\eps}|^2) u + |\nabla_{g_\eps} u| |\partial g_\eps| \big] \, \dvol_{g_\eps} \nonumber \\
	& \geq -c \left[ \eps^{-1 + (\tfrac{n+2}{2n} - \tfrac{1}{p}) \ell} + \eps^{(\tfrac{n+2}{2n} - \tfrac{2}{p}) \ell} \right] \left[ \int_{U_k \cap Q(\eps)} u^{\tfrac{2n}{n-2}} \, \dvol_{g_\eps} \right]^{\tfrac{n-2}{2n}} \nonumber \\
	& \qquad \qquad - c \eps^{(\tfrac12 - \tfrac{1}{p}) \ell} \left[ \int_{U_k \cap Q(\eps)} |\nabla_{g_\eps} u|^2 \, \dvol_{g_\eps} \right]^{\tfrac12} \nonumber\\
&\geq -c\tau^2\lf(\left[ \int_{U_k \cap Q(\eps)} u^{\tfrac{2n}{n-2}} \, \dvol_{g_\eps} \right]^{\tfrac{n-2}{2n}}+\lf[\int_{U_k \cap Q(\eps)} |\nabla_{g_\eps} u|^2 \, \dvol_{g_\eps}\ri]^\frac12\ri) \nonumber
\end{align}
if $\e^\beta\le \tau^2$, where $\beta$ is the minimum of $-1 + (\tfrac{n+2}{2n} - \tfrac{1}{p}) \ell$, $(\tfrac12 - \tfrac{1}{p}) \ell$ and $(\tfrac{n+2}{2n} - \tfrac{2}{p}) \ell$ which is positive.
Thus, relaxing $U_k \cap Q(\eps)$ to $\Omega$ and using the Sobolev inequality for  functions with value zero boundary value on $\Omega$ together with Lemma \ref{l-smoothing}(i), we find that
\begin{equation} \label{e-scalar-est-2-IVB}
	\mathbf{IV}_B \geq -c\tau^\frac12  \left[ \int_\Omega |\nabla_{g_\eps} u|^2 \, \dvol_{g_\eps} \right]^{\tfrac12},
\end{equation}
with $c$ independent of $\eps$, $\tau$.

Finally, arguing as we did for $\mathbf{IV}$ in \eqref{e-scalar-est-1-II}, we get:
\begin{equation} \label{e-scalar-est-2-V}
	\mathbf{V} \geq - c \tau \left[ \int_\Omega |\nabla_{g_{\eps,\tau}} u|^2 \, \dvol_{g_{\eps,\tau}} \right]^{\tfrac12}.
\end{equation}
Thus, \eqref{e-scalar-est-2} follows from \eqref{e-scalar-est-2-integral}, \eqref{e-scalar-est-2-IVA}, \eqref{e-scalar-est-2-IVB},  \eqref{e-scalar-est-2-V}, and Lemma \ref{l-smoothing}(i). This completes the proof of Lemma \ref{l-scalar-est}.
\end{proof}

\begin{proof}[Proof of Theorem \ref{t-fsing}] Fix $x_0\in \Omega\setminus\sing g$ and choose $r>0$ and construct $g_{\e,\tau}$ as above.
Let $\tau_i \in (0,1)$, $\tau_i \to 0$. Define $\eps_i > 0$ so that
\begin{equation} \label{e-epsilon-i}
	\eps_i^\alpha, \e_i^\beta \leq \tau_i^2
\end{equation}
where $\a,\beta$ are as in  Lemma \ref{l-scalar-est}. For notational simplicity, set $g_i := g_{\eps_i,\tau_i}$. Let $c_n=4(n-1)/(n-2)$. By  Lemma \ref{l-scalar-est}(i), using Sobolev inequality, we conclude that if $i$ is large enough, then
\be\label{e-eigenvalue}
		\int_\Omega |\nabla_{g_i} u|^2 + c_n R(g_i) u^2 \, \dvol_{g_i} \geq \lambda \int_\Omega |\nabla_{g_i}u|^2 \dvol_{g_i}
\ee
for some $\lambda>0$ for all $i$ for any smooth function with compact support in $\Omega$. By standard arguments, there is $i_0>0$ such that for all $i = i_0, i_0 + 1, \ldots$ there is a smooth $u_i>0$ solving
	\begin{equation} \label{e-conformally-scalar-flat}
		\begin{array}{rl}
			- \Delta_{g_i} u_i + \tfrac{n-2}{4(n-1)} R(g_i) u_i = 0 & \text{in } \Omega, \\
			u_i = 1 & \text{on } \Sigma.
		\end{array}
	\end{equation}
	Hence $u_i^{4/(n-2)} g_i$ is a smooth metric on $\Omega$, has scalar curvature everywhere equal to zero, induces the same metric as $g_i$ on $\Sigma = \p \Omega$, and the mean curvature of $\Sigma$ with respect to $g_i$ satisfies
	\begin{equation} \label{e-conformally-scalar-flat-bdry}
		H_{u_i^{4/(n-2)} g_i} = H_g + 2(n-1) \displaystyle{\tfrac{\p u_i}{\p \nu}},
	\end{equation}
	where $\nu$ is the unit outward normal of $g$. The theorem follows from the following lemma because $x_0, r$ are arbitrary.
\end{proof}

\begin{lemma}\label{l-du-boundary} Let $u_i$ be as in the  proof of Theorem \ref{t-fsing}, we have:
\begin{itemize}
  \item [(i)] $
		\liminf_{i\to\infty} \left( \inf_{\Sigma} \tfrac{\p}{\p \nu} u_i \right) \geq 0$, where $\nu$  is the unit outward normal of $\Sigma$ with respect to $g$.
  \item [(ii)] If $g$ is not Ricci flat at $x_0$, or if $H_->H_+$ somewhere in $P$, then $\int_\Sigma  \displaystyle{\tfrac{\p u_i}{\p \nu}}d\sigma>0$ if $i$ is large enough.

\end{itemize}
\end{lemma}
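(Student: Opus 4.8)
The goal is to control $\tfrac{\p}{\p\nu}u_i$ on $\Sigma$ in the limit $i\to\infty$, where $u_i$ solves the conformally-scalar-flat problem \eqref{e-conformally-scalar-flat}. The natural device is the variational characterization: $u_i$ minimizes the functional $\mathcal{E}_i(v)=\int_\Omega |\nabla_{g_i}v|^2+c_n^{-1}\cdot\tfrac{n-2}{4(n-1)}\cdot\ldots$; more usefully, since $u_i=1$ on $\Sigma$, an integration by parts in \eqref{e-conformally-scalar-flat} gives
\begin{equation*}
	\int_\Sigma \tfrac{\p u_i}{\p\nu}\,d\sigma_{g_i} = \int_\Omega |\nabla_{g_i}u_i|^2 + \tfrac{n-2}{4(n-1)} R(g_i) u_i^2 \,\dvol_{g_i},
\end{equation*}
which, by Lemma \ref{l-scalar-est}(i) with $u=u_i$ and $\lambda>0$ from \eqref{e-eigenvalue}, is bounded below by a positive multiple of $\int_\Omega |\nabla_{g_i}u_i|^2\,\dvol_{g_i}$ plus the $\mathring\Ric$ and $(H_--H_+)$ contributions. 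So the first step is to establish uniform bounds — from above and below — on $\int_\Omega|\nabla_{g_i}u_i|^2\,\dvol_{g_i}$ and on $u_i$ itself (a priori $C^0$ and gradient estimates near $\Sigma$, where the metrics $g_i$ are smooth and converge to $g$). Uniform ellipticity from Lemma \ref{l-smoothing}(i) plus the Sobolev-constant uniformity gives $\sup_\Omega u_i\le C$ and $u_i\ge c>0$ by Moser iteration / maximum principle applied to \eqref{e-conformally-scalar-flat} (the zeroth-order term has the good sign after pairing against \eqref{e-eigenvalue}). Near $\Sigma$, where $g_i\equiv g$ eventually, standard Schauder estimates give $C^{1,\alpha}$ control, hence $\tfrac{\p}{\p\nu}u_i$ is bounded and one can extract a convergent subsequence; the limit $u_\infty$ is $g$-harmonic near $\Sigma$ with $u_\infty=1$ there.

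For part (i), I would argue by contradiction and subsequence. Suppose $\liminf_i \inf_\Sigma \tfrac{\p}{\p\nu}u_i = -2\delta_0 <0$. Passing to a subsequence, $u_i\to u_\infty$ in $C^1_{\loc}$ near $\Sigma$ with $u_\infty$ solving $-\Delta_g u_\infty + \tfrac{n-2}{4(n-1)}R(g)u_\infty=0$ weakly on $\Omega\setminus\sing g$, $u_\infty=1$ on $\Sigma$, $0<u_\infty\le C$; and $\inf_\Sigma\tfrac{\p}{\p\nu}u_\infty\le -2\delta_0$. The point is that such a limit cannot exist: since $R(g)\ge 0$ off $\sing g$, $u_\infty$ is a positive supersolution of $\Delta_g u_\infty=0$, so it's superharmonic, and (because the singular set $\sing g$ has the codimension/capacity properties built into condition $(\mathbf{C})$ — precisely what makes Lemma \ref{l-scalar-est} work — it is removable for bounded superharmonic functions) the minimum principle forces $u_\infty\ge 1$ on all of $\Omega$, with $u_\infty\equiv 1$ iff it touches. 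Hence $u_\infty\ge 1 = u_\infty|_\Sigma$, so the inner normal derivative is $\le 0$, i.e. $\tfrac{\p}{\p\nu}u_\infty\ge 0$ on $\Sigma$ wherever it's defined — contradicting $\le -2\delta_0$. Turning the contradiction into the stated $\liminf$ inequality: the uniform energy/elliptic bounds show that *any* subsequential limit satisfies $\tfrac{\p}{\p\nu}u_\infty\ge 0$, so $\inf_\Sigma\tfrac{\p}{\p\nu}u_i\to$ something $\ge 0$ along every subsequence, giving (i).

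For part (ii), the extra input is the positive right-hand side in Lemma \ref{l-scalar-est}(i). Testing \eqref{e-conformally-scalar-flat} against $u_i$ and using the displayed identity above together with \eqref{e-scalar-est-1} (with $\kappa$-type parameters chosen as there, $\e_i^\alpha\le\tau_i^2$),
\begin{equation*}
	\int_\Sigma \tfrac{\p u_i}{\p\nu}\,d\sigma_{g_i} \ \ge\ c^{-1}\!\!\int_\Omega|\nabla_{g_i}u_i|^2 + \tfrac12\tau_i\!\!\int_{B^g_{4r}(x_0)}\!\!\chi|\mathring\Ric(g)|^2 u_i^2 + c^{-1}\e_i^{-2}\!\!\int_{P(\e_i^2/400)}\!\!(H_--H_+)u_i^2 \ -\ (\text{lower order}),
\end{equation*}
where the lower-order terms are $o(1)$ times $\int|\nabla_{g_i}u_i|^2$ plus $o(1)$ times $\big(\int u_i^{2n/(n-2)}\big)^{(n-2)/n}$, both controlled by the leading terms via Sobolev and the uniform bounds. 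If $g$ is not Ricci flat at $x_0$ then $|\mathring\Ric(g)|^2>0$ on a neighborhood, $\chi\equiv 1$ there, and $u_i\ge c>0$, so the $\tau_i$-term is $\ge c^{-1}\tau_i$; since the negative remainder is $o(\tau_i)$ (being $O(\tau_i^{3/2})$ plus $o(1)\cdot\int|\nabla u_i|^2$, and $\int|\nabla_{g_i}u_i|^2\to 0$ — which itself follows once one knows the limit is $u_\infty\equiv$ const, forced by the above rigidity when the RHS vanishes, but here we don't need it, just that it's bounded times $o(1)$), the integral $\int_\Sigma\tfrac{\p u_i}{\p\nu}\,d\sigma_{g_i}$ is $>0$ for large $i$; and $d\sigma_{g_i}=d\sigma$ eventually since $g_i=g$ near $\Sigma$. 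The case $H_->H_+$ somewhere on $P$ is identical, using the $P$-term: $\e_i^{-2}\int_{P(\e_i^2/400)}(H_--H_+)u_i^2$ is $\ge c^{-1}\e_i^{-2}\cdot\e_i^2\cdot$(area)$\cdot c = c^{-1}$, a positive constant, swamping the lower-order terms.

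\medskip
\noindent\textbf{Main obstacle.} The delicate point is making the error terms in Lemma \ref{l-scalar-est}(i) genuinely subleading \emph{after} substituting $u=u_i$ — i.e. absorbing the terms $c\tau^{1/2}\int_\Omega|\nabla_{g_i}u_i|^2$ and $c\tau^{3/2}\big(\int u_i^{2n/(n-2)}\big)^{(n-2)/n}$ into the positive left side. This requires (a) the uniform lower eigenvalue bound \eqref{e-eigenvalue} so that $\int_\Sigma\tfrac{\p u_i}{\p\nu}\ge\lambda\int|\nabla_{g_i}u_i|^2 - (\text{signless terms})$, letting the $\tau^{1/2}$-gradient term be absorbed for large $i$; and (b) uniform Sobolev inequalities (noted after the definition of $g_{\e,\tau}$) plus a uniform $C^0$ bound on $u_i$ to turn $\big(\int u_i^{2n/(n-2)}\big)^{(n-2)/n}$ into $C\int|\nabla_{g_i}u_i|^2 + C$ and hence render the $\tau^{3/2}$-term negligible. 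Getting the $C^0$ bound on $u_i$ uniformly — via Moser iteration with Sobolev constants independent of $i$, exploiting the good sign of the zeroth-order coefficient secured by \eqref{e-eigenvalue} — is the technical heart; everything else is bookkeeping with the estimates already assembled in Lemmas \ref{l-smoothing} and \ref{l-scalar-est}.
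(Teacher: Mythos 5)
Your overall strategy for (i) --- pass to a subsequential $C^1$-limit near $\Sigma$ and apply a maximum-principle argument --- is in the right family, but the details contain genuine gaps and a sign error, and the paper takes a noticeably more careful route.

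First, the sign error: from $-\Delta_g u_\infty + \tfrac{n-2}{4(n-1)} R(g) u_\infty = 0$ with $R(g)\ge 0$ and $u_\infty>0$ we get $\Delta_g u_\infty \ge 0$, so $u_\infty$ is \emph{subharmonic}, not superharmonic, and the maximum principle gives $u_\infty \le 1$, not $u_\infty \ge 1$. You then compound this with a second sign slip when reading off the normal derivative, so that the two errors cancel and you land on the right conclusion $\tfrac{\p}{\p\nu}u_\infty\ge 0$; but as written the chain of implications is wrong.

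Second, and more seriously, the uniform bounds that you need in order to extract the limit $u_\infty$ are not available by the route you indicate. You claim a \emph{global} $L^\infty$ bound $\sup_\Omega u_i\le C$ (and a positive lower bound) via Moser iteration, asserting the zeroth-order term ``has the good sign after pairing against \eqref{e-eigenvalue}.'' But Moser iteration for $-\Delta_{g_i} u_i + c_n^{-1}R(g_i) u_i = 0$ requires the negative part $R(g_i)_-$ to be bounded in $L^{q}$ for some $q>n/2$ \emph{uniformly in $i$}, and this fails: near $P$, $R(g_i)_- \sim \eps_i^{-2}$ on a set of measure $\sim \eps_i^2$, so $\|R(g_i)_-\|_{L^q} \sim \eps_i^{2/q - 2} \to\infty$ for every $q>1$. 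The coercivity \eqref{e-eigenvalue} controls the quadratic form, not the pointwise sup. The paper sidesteps this entirely: instead of a global sup bound, it tests \eqref{e-conformally-scalar-flat} against $v_i = (u_i-1)_+$ and uses Lemma~\ref{l-scalar-est}(ii) (the linear-in-$u$ estimate) to get $\|\nabla_{g_i} v_i\|_{L^2}\le c\tau_i\to 0$, hence $\int_\Omega [(u_i-1)_+]^2\to 0$; this gives uniform $L^2$ control of $u_i$, which, restricted to a collar $V$ of $\Sigma$ where $g_i\equiv g$ and the equation is smooth, yields uniform $C^2(V)$ bounds by Schauder. The limiting contradiction is then carried out on $V$ only, so no removable-singularity result is needed. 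Your proposal neither mentions the test function $(u_i-1)_+$ nor Lemma~\ref{l-scalar-est}(ii), which is where the paper's argument for (i) actually lives.

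For (ii), your direct estimate requires a positive lower bound $u_i\ge c>0$ on $B^g_{4r}(x_0)$ (and near $P$) uniformly in $i$, but you never establish it, and it is not available a priori for the same reason as above; moreover $P$ is an interior separating hypersurface, so naive Harnack-chaining from $\Sigma$ may not even be possible. The paper instead argues by contradiction: assume $\int_\Sigma \tfrac{\p}{\p\nu}u_i\,d\sigma\le 0$, deduce from \eqref{e-fsing-inequality-chain} and nonnegativity of $R(g)$, $(H_--H_+)$ that $\int_\Omega|\nabla_{g_i}u_i|^2\to 0$, hence (via Sobolev-Poincar\'e with $u_i|_\Sigma=1$ and interior Harnack) that $u_i\to 1$ locally uniformly away from $P\cup Q$. \emph{Only then} is the lower bound on $u_i$ near $x_0$ available, and only then does the coefficient of $\tau_i$ get a uniform positive constant in front. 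The $H_->H_+$ case also needs the more delicate collar-splitting estimate in the paper, using $u_i(z,\eps_0)\to 1$ uniformly on $\{\eps_0\}\times P$ and the decay of $\int_0^{\eps_0}|u_i\,\p_t u_i|$; your one-line bound $\eps_i^{-2}\cdot\eps_i^2\cdot(\text{area})\cdot c = c^{-1}$ silently assumes that $u_i$ is uniformly bounded below on $P(\eps_i^2/400)$, which is unjustified.

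In short: the structure (integrate by parts, invoke Lemma~\ref{l-scalar-est}, pass to a limit near $\Sigma$) is right, but the two pillars you lean on --- a global $L^\infty$/Harnack bound for $u_i$, and a direct rather than contradiction argument in (ii) --- are exactly the things the paper carefully avoids by testing against $(u_i-1)_+$ in (i) and by assuming the negation in (ii). You should (a) fix the sub-/super-harmonic signs, (b) replace the global $L^\infty$ bound with the $L^2$ decay of $(u_i-1)_+$ obtained from Lemma~\ref{l-scalar-est}(ii), and (c) recast (ii) as a contradiction to first obtain $\int|\nabla_{g_i}u_i|^2\to 0$ and hence $u_i\to 1$ locally, before extracting the positivity from the $\mathring{\Ric}$ and $(H_--H_+)$ terms.
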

\begin{proof} (i) Let $f_+$ denote  the positive part of a function $f$, i.e., $f_+ := \max\{f,0\}$. Multiply
	\eqref{e-conformally-scalar-flat} by $v_i=(u_i-1)_+$ and integrate by parts. Using Lemma \ref{l-scalar-est}(ii)  we have that for large $i$:
	\[
	\lambda\int_\Omega |\nabla_{g_i} v_i|^2 \, \dvol_{g_i}\le	\int_\Omega |\nabla_{g_i} v_i|^2 + \tfrac{n-2}{4(n-1)} R(g_i) v_i^2 \, \dvol_{g_i} \leq  c \tau_i \left[ \int_\Omega |\nabla_{g_i} v_i|^2 \, \dvol_{g_i} \right]^{\tfrac12}
	\]
	for some $c > 0$ independent of $i$, where $\lambda>0$ is the constant in \eqref{e-eigenvalue}. Hence
	\[
		\left[ \int_\Omega |\nabla_{g_i} v_i|^2 \, \dvol_{g_i} \right]^{\tfrac12} \leq c \tau_i.
	\]
	By the Poincar\'e--Sobolev inequality, we have:
\begin{equation} \label{e-u-L2-bound}
		 \lim_{i\to\infty} \int_{\Omega} [(u_i - 1)_+]^2 \, \dvol_g = 0.
	 \end{equation}
In particular, the $L^2$ norms $\int_\Omega u_i^2 \, \dvol_{g_i}$ are uniformly bounded.
Recall that, by \eqref{e-sing-away}, $g_i = g$ near $\Sigma$. Consider a collar $V$ near $\Sigma$, without loss of generality $V=\Sigma\times[0,1]$, where the metric $g$ is of the form $dt^2+h_t$; here $t$ denotes the distance from $\Sigma$ and $h_t$ denotes a smooth metric on $\Sigma\times\{t\}$.    Hence, by the mean value inequality and Schauder boundary estimates, $\limsup_{i\to\infty} \Vert u_i \Vert_{C^2(V)} < \infty.$

	If (i) were false, then by Arzel\`a-Ascoli we could to pass to a subsequence that converges in $C^1(V)$ to a function $u_\infty$ on $V$. If $\tfrac{\p}{\p \nu} u_\infty(x_\infty) < 0$ at some $x_\infty \in \Sigma$,  by \eqref{e-u-L2-bound}  $u_\infty \leq 1$ on $V$ since the $L^2$ topology is weaker than $C^1$, so this is impossible. Hence (i) is true.

(ii)	Suppose
$
\int_\Sigma \tfrac{\p}{\p \nu} u_i \, d\sigma \le 0
$
for infinitely many $i$.  We may  suppose this is true for all $i$ large enough. We want to prove that $g$ is Ricci flat at $x_0$ and $H_-=H_+$ on $P$. First note that since $u_i=1$ on $\Sigma$, by the Sobolev inequality, we have
\bee
\lf(\int_\Omega u_i^{\tfrac{2n}{n-2}}\dvol_{g_i} \ri)^{\tfrac{n-2}{n}}\le c\lf(\int_\Omega |\nabla_{g_i}u_i|^2 \dvol_{g_i}+1\ri)
\eee
here we have used the fact that $c g_i\le g\le c^{-1}g_i$ for some $c>0$ independent of $i$.

Multiplying  \eqref{e-conformally-scalar-flat} by $u_i$,  integrating by parts, and using using $u_i \equiv 1$ on $\Sigma$, Lemma \ref{l-scalar-est}(i) we have,
\begin{align} \label{e-fsing-inequality-chain}
	0
		& \geq \int_\Sigma \tfrac{\p}{\p \nu} u_i \, d\sigma \nonumber \\
		& = \int_\Omega |\nabla_{g_i} u_i|^2 + c_n R(g_i) u_i^2 \, \dvol_{g_i} \nonumber \\
		& \geq \int_{\Omega \setminus (P(\eps) \cup Q(\eps))}   R(g) u_i^2 \, \dvol_{g_i}    + c^{-1}\e_i^{-2} \int_{P (\frac1{400}\eps^2)} (H_- - H_+)   u^2_i \, \dvol_{g_i} \nonumber \\
		& \qquad + \tfrac12 \tau_i \int_{B^g_{4r}(x_0)} \chi |\mathring{\Ric}(g)|^2 u^2 \, \dvol_{g_i} +(1 - c\tau_i^\frac12) \int_\Omega |\nabla_{g_i} u_i|^2 \, \dvol_{g_i}\nonumber \\
		& \qquad - c \tau_i^\frac32  \left[ \int_{Q(\eps) \cup B^g_{4r}(x_0)} u_i^{\tfrac{2n}{n-2}} \, \dvol_{g_i} \right]^{\tfrac{n-2}{n}}  \nonumber \\
		& \geq \int_{\Omega \setminus (P(\eps) \cup Q(\eps) )}   R(g) u_i^2 \, \dvol_{g_i}    + c^{-1}\e_i^{-2} \int_{P (\frac1{400}\eps^2)} (H_- - H_+)   u^2_i \, \dvol_{g_i} \nonumber \\
		& \qquad + \tfrac12 \tau_i  \int_{B^g_{4r}(x_0)} \chi |\mathring{\Ric}(g)|^2 u_i^2 \, \dvol_{g_i}     +(1 - c\tau_i^\frac12) \int_\Omega |\nabla_{g_i} u_i|^2 \, \dvol_{g_i} \nonumber \\
		& \qquad - c \tau_i^\frac32
\end{align}
Since $R(g) \geq 0$ outside $\sing g$, $H_+ \leq H_-$ on $P$, we have
	\begin{equation} \label{e-fsing-energy-decay}
		\lim_{i\to\infty} \int_\Omega |\nabla_{g_i}u_i|^2 \, \dvol_{g_i} = 0.
	\end{equation}
	By   the Sobolev inequality on $u_i-1$, and the Harnack inequality, we find that, after passing to a subsequence, $u_i \to 1$ locally uniformly away from $P\cup Q$. Therefore, \eqref{e-fsing-inequality-chain} further implies that $R(g) \equiv 0$ on $\Omega \setminus (P \cup Q)$, $\mathring{\Ric}(g)(x_0)=  0$. Hence $g$ is Ricci flat on $x_0$.

It remains to prove $H_+ = H_-$ along $P$.  Note that on $P(\e_0)=(-\e_0,\e_0)\times P$,  $c(dt^2+h)\le g_i\le c^{-1}(dt^2+h)$ for some $c>0$ independent of $i$,  where $h$ is the induced metric of $g=g_+=g_-$ on $P$. Let $d\sigma_h$ be the volume form of $h$ on $P$. Since $u_i(z,\e_0)\to 1$ uniformly on $\{\e_0\}\times P$,
\begin{align*}
&\e_i^{-2} \int_{P (\frac1{400}\eps^2_i)} (H_- - H_+)   u^2_i \, \dvol_{g_i}\\
\ge &c_1\e_i^{-2}\int_0^{\frac1{400}\eps^2_i}\int_{z\in P}(H_- - H_+)(z)u^2_i(z,t) d\sigma_h(z) dt\\
=&\frac{c_1}{400} \int_P(H_- - H_+)u_i^2(z,\e_0) d\sigma_h\\
&+c_1\e_i^{-2}\int_0^{\frac1{400}\eps^2_i}\int_{z\in P}(H_- - H_+)(z)(u^2_i(z,t)-u_i^2(z,\e_0) )d\sigma_h(z) dt\\
\ge &c_2\int_P(H_- - H_+)d\sigma_h-c_2\e_i^{-2}\int_0^{\frac1{400}\eps^2_i}\int_{z\in P}|(u^2_i(z,t)-u_i^2(z,\e_0))|d\sigma_h(z) dt\\
\ge&c_2\int_P(H_- - H_+)d\sigma_h-2c_2\e_i^{-2}\int_0^{\frac1{400}\eps^2_i}\int_{z\in P}\lf(\int_0^{\e_0}|u_i\p_tu_i |(z,s)ds\ri) d\sigma_h(z) dt\\
\ge &c_2\int_P(H_- - H_+)d\sigma_h-c_3  \int_{z\in P}\lf(\int_0^{\e_0}|u_i\p_t u_i |(z,s)ds\ri) d\sigma_h(z).
\end{align*}
where $c_1, c_2, c_3>0$ are constants independent of $i$.
By \eqref{e-fsing-energy-decay}, Sobolev inequality applied to $u_i-1$ we conclude that $\int_\Omega u_i^2 dv_{g_i}$ is uniformly bounded. By \eqref{e-fsing-energy-decay} and the Schwarz inequality, we conclude that $ \int_{z\in P}\lf(\int_0^{\e_0}|u_i\p_t u_i |(z,s)ds\ri) d\sigma_h(z)\to0$, as $i\to\infty$. Combine this with \eqref{e-fsing-inequality-chain}, we conclude that $H_-=H_+$ along $P$.
\end{proof}

%\section{Another approach for point singularities} \label{s-isolated-singularities}
 \section{Isolated singularities in dimension three}\label{s-isolated-singularities}

{In Theorem \ref{t-fsing}, the metric is assumed to satisfy condition   {\bf (C)}. In %fact,
 {particular,}
 in dimension three the metric is assumed to be %in
 $W^{1,p}$, $p>3$, near
 {codimension-three}
 singularities. % with codimension at least {\red three}.
 However, %last result
 {the conclusion}
 	of the theorem is still true in dimension three for \emph{isolated point singularities} with {the} weaker assumption that the metric is %in
 	$L^\infty$ as in Definition \ref{d-fsing}. Namely, we have the following:}

\begin{prop} \label{p-isolated-singularities-inequality-rigidity}
	Let $(\Omega^3, g)$ be a singular fill-in of $(\Sigma, \gamma)$ such that $\mathcal{H}^0(\sing g) < \infty$. Then $\FF(\Sigma, \gamma) \neq \emptyset$, and
	\begin{equation} \label{e-isolated-singularities-inequality}
		\frac{1}{8\pi} \int_\Sigma H_g \, d\sigma \leq \LL(\Sigma, \gamma).
	\end{equation}
	If equality holds in \eqref{e-isolated-singularities-inequality}, then $\sing g = \emptyset$, $\Sigma$ is connected, and $(\Omega, g)$ is a mean-convex handlebody with a flat interior.
\end{prop}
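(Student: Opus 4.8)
The plan is to trade each $C^0$ point singularity for a well-understood asymptotically flat end, cap that end off with a positive-scalar-curvature disk, and then appeal to the smooth theory of \cite{M-M}. We may assume $\sing g=\{p\}$ is a single point (several points are handled identically, blowing all of them up at once). Let $\psi_1>0$ be the bounded solution of $L_g\psi_1:=-\Delta_g\psi_1+\tfrac18R(g)\psi_1=0$ on $\Omega\setminus\{p\}$ with $\psi_1=1$ on $\Sigma$, and let $G>0$ be the Green's function of $L_g$ with pole at $p$ and $G|_\Sigma=0$; these exist since $\lambda_1(L_g)\geq\lambda_1(-\Delta_g)>0$ ($\partial\Omega\neq\emptyset$, $R(g)\geq0$). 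Because $\psi_1$ is bounded, its isolated singularity at $p$ is removable and it extends subharmonically across $p$, so $\psi_1\leq1$ on $\Omega$ by the maximum principle, whence $\tfrac{\partial\psi_1}{\partial\nu}\geq0$ on $\Sigma$ (with $\nu$ the outward unit normal). For $c>0$ set $\Phi_c:=\psi_1+cG$ and $\hat g_c:=\Phi_c^4g$. Then $\hat g_c$ is scalar-flat on $\Omega\setminus\{p\}$, induces $\gamma$ on $\Sigma$, and since $\Phi_c$ blows up like a positive constant times $\dist_g(p,\cdot)^{-1}$ near $p$, after inverting in coordinates where $g$ is $C^0$-close to Euclidean the metric $\hat g_c$ acquires a (scalar-flat) asymptotically flat end at $p$.

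I would then excise the end along a large coordinate sphere $T_R$ and glue in a positive-scalar-curvature disk $(\mathcal C_R,k_R)$ with $\partial\mathcal C_R$ isometric to $(T_R,\hat g_c|_{T_R})$ — which exists because $(T_R,\hat g_c|_{T_R})$ is nearly round, hence embeds in $\R^3$ (Nirenberg--Pogorelov) and bounds a nearly-Euclidean ball. Since $T_R$ is mean-convex toward infinity in $\hat g_c$ (mean curvature $\approx+2/\rho$ for the relevant radius $\rho$) while the boundary of the capping ball bends away from the ball's interior, the mean-curvature jump across $T_R$ has the sign required by Miao's corner smoothing \cite{Miao02}, which then provides a smooth metric $h_c$ with $R(h_c)\geq0$ on the manifold $\widehat\Omega$ obtained from $(\Omega\setminus\{p\},\hat g_c)$ by replacing the end beyond $T_R$ with $\mathcal C_R$. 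As a fixed collar of $\Sigma$ is left untouched, $\widehat\Omega$ is a connected smooth fill-in of $(\Sigma,\gamma)$ with $H_{h_c}|_\Sigma=H_g+4\tfrac{\partial\Phi_c}{\partial\nu}=H_g+4\tfrac{\partial\psi_1}{\partial\nu}+4c\tfrac{\partial G}{\partial\nu}\geq H_g+4c\tfrac{\partial G}{\partial\nu}>0$ for $c$ small (using $\tfrac{\partial\psi_1}{\partial\nu}\geq0$ and that $\tfrac{\partial G}{\partial\nu}\leq0$ is bounded). In particular $\FF(\Sigma,\gamma)\neq\emptyset$.

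For the inequality I would integrate the boundary mean curvature. Via the divergence theorem on $\Omega\setminus B_\epsilon(p)$, the equation $\Delta_g\Phi_c=\tfrac18R(g)\Phi_c$, and the fact that the flux of $\Phi_c$ through small geodesic spheres around $p$ is $O(c)$ (vanishing as $c\to0$, since $\psi_1$ is bounded there), one gets $\int_\Sigma H_{h_c}\,d\sigma=\int_\Sigma H_g\,d\sigma+\tfrac12\int_\Omega R(g)\psi_1\,dv+O(c)$. As $R(g)\geq0$ and $\psi_1>0$, applying Definition \ref{d-ff-ll} to the smooth fill-in $h_c$ and sending $c\to0^+$ gives
\[ \LL(\Sigma,\gamma)\ \geq\ \frac{1}{8\pi}\int_\Sigma H_g\,d\sigma+\frac{1}{16\pi}\int_\Omega R(g)\psi_1\,dv\ \geq\ \frac{1}{8\pi}\int_\Sigma H_g\,d\sigma, \]
which is \eqref{e-isolated-singularities-inequality}.

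For rigidity, equality in \eqref{e-isolated-singularities-inequality} forces $\int_\Omega R(g)\psi_1\,dv=0$, hence $R(g)\equiv0$ on $\Omega\setminus\{p\}$; then $\psi_1$ is a bounded $\Delta_g$-harmonic function equal to $1$ on $\Sigma$ with a removable singularity at $p$, so $\psi_1\equiv1$ and $g$ is scalar-flat away from $p$. By \cite{SmithYang92} the isolated singularity is then removable, i.e.\ $g$ extends smoothly (after a change of smooth structure) across $p$; thus $\sing g=\emptyset$, $(\Omega,g)\in\FF(\Sigma,\gamma)$ attains $\LL(\Sigma,\gamma)$, and \cite[Theorem 1.4]{M-M} gives that $\Sigma$ is connected and $(\Omega,g)$ is a mean-convex handlebody with flat interior. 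The main obstacle I anticipate is the excision step under the weak ($C^0$) regularity at $p$: one must check that the blown-up end is flat enough near infinity for $(T_R,\hat g_c|_{T_R})$ to be round enough to bound a positive-scalar-curvature cap and to make the corner inequality along $T_R$ genuinely valid — i.e.\ propagate the mere continuity of $g$ at $p$ to quantitative control on $\hat g_c$ near infinity; a secondary point is justifying the divergence-theorem identity and the limit $c\to0$ when $R(g)$ is unbounded near $p$.
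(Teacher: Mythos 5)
Your proposal shares the opening move with the paper---blow up each singular point $p$ conformally so that $(\Omega\setminus\sing g, \hat g)$ acquires an asymptotically flat end---but it diverges thereafter, and the divergence is where the two gaps are.

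\textbf{Gap in the inequality.} You propose to cap the end with a positive-scalar-curvature disk glued along a large coordinate sphere $T_R$, invoking Nirenberg--Pogorelov for the embedding and Miao's corner result for the smoothing. This requires: (a) $T_R$ with its induced metric to have positive Gauss curvature, and (b) the mean-curvature jump across $T_R$ to have the favorable sign. Both are statements about the \emph{first and second derivatives} of $\hat g_c$ near infinity, i.e.\ about the derivatives of $g$ near $p$ in the original coordinates. By Definition \ref{d-fsing}, $g$ is merely $C^0$ at $p$ (and Remark \ref{r-isolated-singularities} says the paper's proof of \eqref{e-isolated-singularities-inequality} requires only $L^\infty$ with $L^\infty$ inverse); no decay of $\partial\hat g_c$ or $\partial^2 \hat g_c$ at infinity is available, so the coordinate spheres $T_R$ can have wild intrinsic and extrinsic geometry for every $R$. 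You flag this worry yourself, correctly; but it is not a technical nuisance to be ``propagated'' away---there is nothing to propagate. The paper circumvents it entirely by a different construction: it uses geometric measure theory to find $\hat g_\epsilon$-area-minimizing surfaces $T_\epsilon$ separating the singular points from $\Sigma$, whose existence and interior regularity only require uniform $L^\infty$ equivalence of $\hat g_\epsilon$ to the Euclidean metric; the component of $\Omega\setminus T_\epsilon$ containing $\Sigma$ then lies in $\Fcirc(\Sigma,\gamma)$, and one invokes $\Lcirc = \LL$ (Proposition \ref{p-equality-l-ll}). That replacement of ``cap with a convex disk'' by ``excise across a minimal surface and use $\Fcirc$'' is precisely the missing idea.

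\textbf{Gap in the rigidity.} You deduce $R(g)\equiv 0$ away from $p$ and then immediately invoke \cite{SmithYang92} to remove the singularity. But \cite{SmithYang92} is a \emph{Ricci}-curvature removability theorem; scalar flatness alone does not constrain the metric near an isolated point (there are scalar-flat metrics with genuinely singular isolated points). The paper inserts an extra step: after showing $R(g)\equiv 0$ it runs a separate perturbation argument ($g_t = g - th$, solve the Dirichlet problem \eqref{e-isolated-singularities-conformal-equation}, differentiate the constraint at $t=0$) to deduce $\int_\Omega \langle h, \Ric(g)\rangle\,dv_g = 0$ for arbitrary $h$ supported away from $\Sigma\cup\sing g$, hence $\Ric(g)\equiv 0$; in dimension $3$ this means $g$ is flat, and only then does \cite{SmithYang92} apply. (This step also uses the $W^{1,p}$, $p>3$, regularity of $g$ across $\sing g$, per Remark \ref{r-isolated-singularities}.) Your argument that $\psi_1 \equiv 1$ is sound, and your computation $\int_\Sigma H_{\hat g_c}\,d\sigma = \int_\Sigma H_g\,d\sigma + \tfrac12\int_\Omega R(g)\psi_1\,dv + O(c)$ is correct and pleasant, but it gives you only scalar flatness, which is not enough to close the rigidity.
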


\begin{rem} \label{r-isolated-singularities}
{
{The proposition is not true} % There are examples  to show that the
if the metric is not %in
$L^\infty$. Let $m>0$,
{and}
consider $M=B(1)\setminus\{0\}$ in $\R^3$, with metric given by
$$
g=d\rho^2+\phi^4 r^2h_0
$$
where $\phi= 1-\frac{m}{2r}$, $h_0$ is the standard metric of the unit sphere, {and} $r$, $\rho$ are related by {$d\rho =\phi^2 dr$} with $\rho=0$ when $r=\frac12m$. Then $(M,g)$ is isometric
{to a portion} %that part
 $r_0>r>\frac12m$ (for some $r_0$) of the Schwarzschild manifold  with negative mass $-m$.
By the computations in
{Section}
\ref{s-schwarzschild-computations}, we see that the conclusion in the proposition is false for $(M,g)$
{if viewed as a potential ``singular fill-in'' of a round sphere.}
 On the other hand, near the singular point $\rho=0$,
$$
g=d\rho^2+c\rho^\frac43(1+O(\rho^\frac23))h_0.
$$
Hence the metric is not bounded below (thus, not $L^\infty$) near the singular point, {so it is not an admissible singular fill-in per Definition \ref{d-fsing}.}}
\end{rem}

The proof we give relies on the idea behind the class $\F$ of fill-ins; see Definition \ref{d-f-l}. The unprescribed minimal surfaces at the boundary will, essentially, be used to ``hide'' the singular points.

\begin{proof}[Proof of Proposition \ref{p-isolated-singularities-inequality-rigidity}]
	The result is trivial when $\sing g = \emptyset$, so let's assume that $\sing g \neq \emptyset$. We will explicitly construct an element of $\Fcirc(\Sigma, \gamma)$; thus, $\FF(\Sigma, \gamma)$ is also nonempty by Proposition \ref{p-equality-l-ll}.
	
	Let $\psi$ be the Greens function for the Laplacian on $(\Omega^3, g)$ with Dirichlet boundary data on $\partial \Omega$, and $\psi(x) \to \infty$ as $\dist_g(x, \sing g) \to 0$. We note that such Greens functions exist for $g$ with isolated %$W^{1,p}$
	{$L^\infty$}
	point singularities; see, e.g., \cite{LSW63}. %(Indeed, $L^\infty$ regularity would suffice.)
	Moreover, $\psi \in C^\infty_{\loc}(\Omega \setminus \sing g)$ and
	\begin{equation} \label{e-isolated-point-singularities-greens}
		c^{-1} \dist_g(\cdot, \sing g)^{-1} \leq \psi \leq c \dist_g(\cdot, \sing g)^{-1}, \; x \in \Omega,
	\end{equation}
	where $c$ depends on the original metric $g$.
	
	Define $\wh g_{\epsilon} = (1 + \epsilon \psi)^4 g$. Note that
	\begin{equation} \label{e-isolated-point-singularities-mean-curv-limit}
		\lim_{\epsilon \to 0} H_{\wh g_\epsilon} = H_g \text{ uniformly on } \Sigma
	\end{equation}
	and, since $R(g) \geq 0$ on $\Omega \setminus \sing g$,
	\begin{equation} \label{e-isolated-point-singularities-scalar-curv}
		R(\wh g_\epsilon) \geq 0 \text{ on } \Omega \setminus \sing g.
	\end{equation}
	
	Fix a family of disjoint open neighborhoods of the points in $\sing g$ (one for each point) labeled $\{U_p\}_{p \in \sing g}$, so that each $U_p \subset M \setminus \Sigma$ is diffeomorphic to a 3-ball.
	
	\begin{clai*}
		Let $p \in \sing g$. There exists $\Phi_p : \R^3 \setminus \overline{B}_1(0) \to U_p \setminus \{p\}$, which is a diffeomorphism, and a constant $c_{p,\epsilon} > 1$ such that
		\begin{equation} \label{e-isolated-point-singularities-uniformly-euclidean}
			c_{p,\epsilon}^{-1} g_0 \leq \Phi_p{}^* \wh g_\epsilon \leq c_{p,\epsilon} g_0.
		\end{equation}
		Here, $g_0$ is the flat Euclidean metric.
	\end{clai*}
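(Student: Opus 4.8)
The plan is to recognize $U_p\setminus\{p\}$ as the exterior of a unit ball by composing a chart around $p$ with the standard inversion, and then to follow the conformal factor $(1+\epsilon\psi)^4$ through this identification, using the Green's function bound \eqref{e-isolated-point-singularities-greens} to control $\psi$. The point is that the blow-up rate $\dist_g(\cdot,p)^{-1}$ of $\psi$ is precisely the one that renders $(1+\epsilon\psi)^4 g$ uniformly Euclidean after inversion.

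First I would fix a diffeomorphism $\varphi : U_p \to B_1(0) \subset \R^3$ with $\varphi(p) = 0$ (possible since $U_p$ is diffeomorphic to a $3$-ball), and I may assume $\overline{U_p}$ is compact in $\Omega$, disjoint from $\Sigma$ and from the other points of $\sing g$. Because $g$ is $L^\infty$ with $L^\infty$ inverse near $p$ (all that is used here; cf. Remark \ref{r-isolated-singularities}) and smooth, nondegenerate on $\overline{U_p}\setminus\{p\}$, there is $\lambda > 1$ with $\lambda^{-1} g_0 \le \varphi_* g \le \lambda g_0$ on $B_1(0)$; in particular $\dist_g(\cdot,p) \asymp |\varphi(\cdot)|$ on $U_p$. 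Feeding this into \eqref{e-isolated-point-singularities-greens} gives $c_1^{-1}|x|^{-1} \le \psi\circ\varphi^{-1}(x) \le c_1 |x|^{-1}$ on $B_1(0)\setminus\{0\}$ for some $c_1>1$; since $|x|<1$ there, one checks $1+\epsilon(\psi\circ\varphi^{-1}) \asymp 1+|x|^{-1}$ with implied constants depending only on $\epsilon, c_1$. Hence there is $C = C(\epsilon,\lambda,c_1)>1$ with
\[ C^{-1}(1+|x|^{-1})^4 g_0 \le \varphi_* \wh g_\epsilon \le C(1+|x|^{-1})^4 g_0 \quad\text{on } B_1(0)\setminus\{0\}. \]

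Next I would bring in the inversion $\iota : \R^3\setminus\{0\}\to\R^3\setminus\{0\}$, $\iota(y) = y/|y|^2$, a conformal involution (its differential $d\iota_y$ is $|y|^{-2}$ times an orthogonal map, so $\iota^* g_0 = |y|^{-4} g_0$) that restricts to a diffeomorphism $\R^3\setminus\overline{B}_1(0)\to B_1(0)\setminus\{0\}$. Setting $\Phi_p := \varphi^{-1}\circ\iota\big|_{\R^3\setminus\overline{B}_1(0)}$ then gives a diffeomorphism onto $U_p\setminus\{p\}$, so $\Phi_p^*\wh g_\epsilon = \iota^*(\varphi_*\wh g_\epsilon)$. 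Since $|\iota(y)| = |y|^{-1}$, pulling the displayed bound back by $\iota$ turns the factor $(1+|x|^{-1})^4$ into $(1+|y|)^4$, whence $\iota^*\big[(1+|x|^{-1})^4 g_0\big] = (1+|y|)^4|y|^{-4} g_0 = (1+|y|^{-1})^4 g_0$, and for $|y|\ge 1$ one has $g_0 \le (1+|y|^{-1})^4 g_0 \le 16\,g_0$. Combining, $C^{-1} g_0 \le \Phi_p^*\wh g_\epsilon \le 16 C\, g_0$ on $\R^3\setminus\overline{B}_1(0)$, which is \eqref{e-isolated-point-singularities-uniformly-euclidean} with $c_{p,\epsilon} := 16 C > 1$.

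The only substantive point — everything else being routine bookkeeping of comparability constants — is matching the two conformal factors under $\iota$: the sharp rate $\dist_g(\cdot,p)^{-1}$ of $\psi$ at $p$ is exactly what makes $(1+\epsilon\psi)^4 g$ asymptotically Euclidean after inversion, which is why the Green's function (and not some other positive weight blowing up at $p$) is the natural choice here and is what lets one later excise the blown-up ends of $(\Omega,\wh g_\epsilon)$ and cap them off, as in the rest of the proof of Proposition \ref{p-isolated-singularities-inequality-rigidity}.
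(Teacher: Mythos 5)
Your proof is correct and follows essentially the same strategy as the paper: fix a coordinate chart identifying $U_p$ with $B_1(0)$, compose with the standard inversion, and use the two-sided Green's function bound \eqref{e-isolated-point-singularities-greens} to match the blow-up of $(1+\epsilon\psi)^4$ against the conformal factor $|y|^{-4}$ of the inversion. Your version is a bit more explicit about the intermediate comparabilities (in particular, the step $\dist_g(\cdot,p)\asymp|\varphi(\cdot)|$ and the bound $1\le(1+|y|^{-1})^4\le 16$ for $|y|\ge1$), which is a welcome clarification, but there is no substantive difference.
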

	\begin{proof}
		Let $(y^1, y^2, y^3)$ be coordinates near $p$ such that $p = (0, 0, 0)$ and $U_p \approx B_1(0)$. We'll take the inversion $x := |y|^{-2}$ to be our diffeomorphism. By \eqref{e-af-decay-conds} we have $g \leq c' g_0$, so by \eqref{e-isolated-point-singularities-greens} and \eqref{e-inversion-map-1},
		\[ \Phi_p{}^* \wh g_\epsilon = (1 + \epsilon \psi \circ \Phi_p)^4 \Phi_p{}^* g \leq (1 + c \epsilon |x|)^4 \Phi_p{}^* c' g_0 = c'(c\epsilon + |x|^{-1})^4 g_0. \]
		The reverse inequality follows similarly.
	\end{proof}
	
	\begin{clai*}
		For small $\epsilon > 0$, there exists a  surface (possibly disconnected) $T_\epsilon \subset \cup_{p \in \sing g} U_p$ that is minimal with respect to $\wh g_\epsilon$, and such that there is a single component $\Omega'$ of $\Omega \setminus T_\epsilon$ that contains $\Sigma$. Moreover, $\overline{\Omega'} \cap \sing g = \emptyset$, so the metric completion of $(\Omega', g|_{\Omega'})$ is an element of $\Fcirc(\Sigma, \gamma)$.
	\end{clai*}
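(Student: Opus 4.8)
The plan is to treat each blown‑up singular point $p \in \sing g$ as an asymptotically flat end of $(\Omega \setminus \sing g, \wh g_\epsilon)$, take $T_\epsilon$ to be the union of the outermost minimal surfaces capping those ends, and then verify the defining properties of $\Fcirc(\Sigma,\gamma)$ directly from the facts about $\wh g_\epsilon$ already recorded. The argument splits into: (1) constructing a closed $\wh g_\epsilon$‑minimal surface $T_p$ around each end; (2) assembling $T_\epsilon = \bigcup_p T_p$ and identifying $\Omega'$; (3) checking the $\Fcirc$ axioms on $(\overline{\Omega'}, \wh g_\epsilon)$.

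For (1), I would fix $p$ and identify $U_p \setminus \{p\}$ with $\R^3 \setminus \overline B_1(0)$ via the inversion $\Phi_p$ of the previous claim, so that $p$ corresponds to $\infty$. By \eqref{e-isolated-point-singularities-uniformly-euclidean}, and more precisely by the estimate $\Phi_p{}^*\wh g_\epsilon \approx c'(c\epsilon + |x|^{-1})^4 g_0$ from the proof of that claim, $\Phi_p{}^*\wh g_\epsilon$ is asymptotically flat, converging to the flat metric $c'(c\epsilon)^4 g_0$ as $|x| \to \infty$. Two consequences: large coordinate spheres $\{|x| = R\}$ have $\wh g_\epsilon$‑area $\sim 4\pi c'(c\epsilon)^4 R^2 \to \infty$, so minimizing sequences of surfaces enclosing $p$ cannot escape to $p$ and stay in a fixed compact subset of $U_p$ bounded away from $p$; and for $R$ large, $\{|x| = R\}$ is strictly mean‑convex toward the compact side and hence a valid outer barrier. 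On the inner side, since the conformal factor $(1+\epsilon\psi)^4$ has a Schwarzschild‑type bump at $p$ (recall $\psi \sim \dist_g(\cdot, p)^{-1}$ up to a positive constant, by \eqref{e-isolated-point-singularities-greens}), for $\epsilon$ small there is a near‑minimal coordinate sphere at $\dist_g(\cdot, p) \sim \epsilon$, so the trapped region determined by $\wh g_\epsilon$‑minimal surfaces inside $U_p$ is nonempty and its closure avoids $p$. The standard existence theory for outermost minimal surfaces (apparent horizons) in asymptotically flat ends (see, e.g., \cite{HuiskenIlmanen01, Bray01}), together with interior regularity of area‑minimizers in dimension $n = 3$, then yields $T_p$: a nonempty, smooth, closed, embedded, two‑sided $\wh g_\epsilon$‑minimal hypersurface lying in $\overline{U_p}$, at positive distance from $p$, and separating $p$ from $\p U_p$; being outermost, $T_p$ is disjoint from $\p U_p$ unless $\p U_p$ is itself minimal (a harmless case).

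For (2) and (3), set $T_\epsilon := \bigcup_{p \in \sing g} T_p \subset \bigcup_p U_p$. Since $\Omega$ is connected and the $U_p$ are disjoint $3$‑balls, $\Omega \setminus \bigcup_p \overline{U_p}$ is connected and contains all of $\Sigma$, so there is a unique component $\Omega'$ of $\Omega \setminus T_\epsilon$ meeting $\Sigma$, consisting of that connected set together with, for each $p$, the component of $U_p \setminus T_p$ adjacent to $\p U_p$. Because $T_p$ separates $p$ from $\p U_p$ and lies at positive distance from $p$, we get $\overline{\Omega'} \cap \sing g = \emptyset$, so $\wh g_\epsilon$ (and $g$, which differs from it by the factor $(1+\epsilon\psi)^4$, smooth and positive on $\Omega \setminus \sing g$) is smooth on $\overline{\Omega'}$. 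The metric completion of $(\Omega', \wh g_\epsilon|_{\Omega'})$ is then the compact smooth manifold $(\overline{\Omega'}, \wh g_\epsilon)$ with boundary $\Sigma \sqcup T_\epsilon$, and the $\Fcirc$ axioms hold: $R(\wh g_\epsilon) \geq 0$ on $\overline{\Omega'}$ by \eqref{e-isolated-point-singularities-scalar-curv}; the induced metric on $\Sigma$ is $\gamma$ because $\psi|_\Sigma = 0$ forces $\wh g_\epsilon|_\Sigma = g|_\Sigma$; the mean curvature vector of $\Sigma$ points strictly into $\Omega'$ for $\epsilon$ small, by \eqref{e-isolated-point-singularities-mean-curv-limit} and $H_g > 0$; and $\Sigma_H := T_\epsilon$ is $\wh g_\epsilon$‑minimal. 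Hence $(\overline{\Omega'}, \wh g_\epsilon) \in \Fcirc(\Sigma, \gamma)$ with $\Sigma_O = \Sigma$.

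I expect the only genuinely delicate point to be the existence and non‑degeneracy of $T_p$ in step (1): one must rule out the trapped region being empty (so that $T_p \neq \emptyset$) and rule out $T_p$ collapsing onto $p$ or running into $\p U_p$ (so that $T_p$ is a bona fide closed minimal surface). This is precisely where the uniformly Euclidean comparison from the previous claim and the Schwarzschild‑type dominance of the conformal bump for small $\epsilon$ are used, and it is also the source of the low‑regularity flexibility of Remark \ref{r-isolated-singularities}, since only $L^\infty$ control of $g$ near $p$ — not the detailed geometry of coordinate spheres there — is needed. Everything past the construction of $T_p$ is bookkeeping with barriers, connectedness of $\Omega$, and the already‑established curvature and boundary data of $\wh g_\epsilon$.
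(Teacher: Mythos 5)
Your route is genuinely different from the paper's, and it has a gap that becomes fatal at exactly the regularity level the Proposition (and especially Remark \ref{r-isolated-singularities}) is designed to handle. You appeal to the existence theory for outermost minimal surfaces in asymptotically flat ends and to a Schwarzschild-type barrier argument (``near-minimal coordinate sphere at $\dist_g(\cdot,p)\sim\epsilon$'') to show the trapped region is nonempty. Both of these need more than $L^\infty$, or even $C^0\cap W^{1,p}$, control of $g$ near $p$. After the inversion $\Phi_p$, the metric $\Phi_p{}^*\wh g_\epsilon$ is only \emph{uniformly Euclidean} at infinity in the sense of \eqref{e-isolated-point-singularities-uniformly-euclidean}: bounded above and below by $g_0$, with no asymptotic expansion of the metric or its derivatives. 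The apparent horizon existence machinery of Huisken--Ilmanen and Bray requires at least $C^{1,\alpha}$ (in practice $C^2$) asymptotics, and your trapped-region argument requires computing mean curvatures of coordinate spheres $\{\dist_g(\cdot,p)=t\}$, which is not meaningful when $g$ is merely $L^\infty$ there. The very flexibility you correctly identify as the point of Remark \ref{r-isolated-singularities} is what your chosen tools cannot supply.

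The paper avoids these issues by working homologically rather than via outermost surfaces: it takes a $\wh g_\epsilon$-area minimizer $T_{p,\epsilon}$ in the class $[\p U_p]\in H_2(U_p\setminus\{p\};\Z)$, whose existence under $L^\infty$ (uniformly Euclidean) control is supplied by the low-regularity excision lemma \cite[Lemma 6.1]{L-M} and standard GMT compactness; nonemptiness is automatic because $[\p U_p]$ is nontrivial, so no trapped-region argument is needed. The role your Schwarzschild barrier was meant to play is instead played by a quantitative area estimate: $|\spt T_{p,\epsilon}|_{\wh g_\epsilon}\to 0$ as $\epsilon\to 0$ (by comparison with small competitors, using only the locally uniform convergence $\wh g_\epsilon\to g$ away from $p$). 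Combined with the monotonicity formula applied \emph{only in the smooth region} $\overline{U}_p\setminus\{p\}$, this forces $\spt T_{p,\epsilon}$ into a fixed collar $\overline W_{p,\tau}$ near $\p U_p$ and away from $p$, and a separate lower area bound rules out components hiding near $p$. You get smoothness, embeddedness, multiplicity one, and $\overline{\Omega'}\cap\sing g=\emptyset$ from there. So if you want to keep the spirit of your step (1), you should replace the apparent-horizon existence theorem by direct area minimization in $[\p U_p]$ and replace the barrier computation by the vanishing-area estimate; as written, both ingredients overreach the available regularity.
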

	\begin{proof}
		By employing the low regularity excision lemma in \cite[Lemma 6.1]{L-M} and standard geometric measure theory we find, for each $p \in \sing g$, a compact $\wh g_\epsilon$-area minimizing element of $[\p U_p] \in H_2(U_p \setminus \{p \}; \Z)$, which does not contain $\{p\}$. Let's denote it $T_{p,\epsilon}$, with $\spt T_\epsilon \subset \overline{U}_p \setminus \{p\}$. From geometric measure theory,
		\begin{equation} \label{e-isolated-points-singularities-regularity}
			\spt T_{p,\epsilon} \setminus \p U_p \text{ is smooth}.
		\end{equation}

		By a straightforward area minimization comparison argument and the locally uniform convergence $\wh g_\epsilon \to g$ on $\overline{U}_p \setminus \{p\}$, we know that
		\begin{equation} \label{e-isolated-singularities-small-area}
			\lim_{\epsilon \to 0} |\spt T_{p,\epsilon}|_{\wh g_\epsilon} = 0,
		\end{equation}
		where $|\cdot|_{\wh g_\epsilon}$ denotes area with respect to $\wh g_\epsilon$.
		
		Denote $W_{p,\tau} := \{ x \in U_p : \dist_g(x; \p U_p) < \tau \}$, where $\tau > 0$ is small. First, we show that $\spt T_{p,\epsilon} \cap (W_{p,2\tau} \setminus W_{p,\tau}) = \emptyset$ provided $\epsilon > 0$ is small (depending on $\tau$). If this were not the case, then for a sequence $\epsilon_j \searrow 0$ we'd be able to choose points $x_{p,j} \in \spt T_{p,\epsilon_j} \cap (W_{p,2\tau} \setminus W_{p,\tau})$ on our area minimizing surfaces. By the fact that $\wh g_{\epsilon_j} \to g$ locally smoothly in $\overline{U}_p \setminus \{p\}$ and from the monotonicity formula (for minimal surfaces) in \emph{small} regions of Riemannian manifolds applied to $\spt T_{p,\epsilon_j} \cap (W_{p,2\tau} \setminus W_{p,\tau})$, we find that
		\[ \liminf_j |\spt T_{p,\epsilon_j}|_{\wh g_\epsilon} > 0, \]
		contradicting \eqref{e-isolated-singularities-small-area}. Thus $\spt T_{p,\epsilon} \cap (W_{p,2\tau} \setminus W_{p,\tau}) = \emptyset$ for small $\epsilon > 0$.
		
		Since $W_{p,2\tau} \setminus W_{p,\tau}$ disconnects $\overline{U}_p$, we have:
		\begin{enumerate}
			\item $T_{p,\epsilon}' \subset \overline{W}_{p,\tau}$, or
			\item $T_{p,\epsilon}' \cap W_{p,2\tau} = \emptyset$,
		\end{enumerate}
		for every connected component $T_{p,\epsilon}' \subset \spt T_{p,\epsilon}$.
		
		Case (2) cannot occur for arbitrarily small $\epsilon > 0$: any nontrivial element of $H_2(\overline{U}_p \setminus \{p\}; \Z)$ that is supported on $\overline{U}_p \setminus W_{p,2\tau}$ has a uniform lower $\wh g_\epsilon$-area bound comparable to the $g$-area of $\p W_{p,2\tau}$ (e.g., compare to the uniformly equivalent setting in $B_1(0) \setminus B_{2\tau}(0)$).
		
		Thus, case (1) will hold for \emph{every} connected component of $\spt T_{p,\epsilon}$ when $\epsilon > 0$ is small, so $\spt T_{p,\epsilon} \cap \p U_p = \emptyset$, so
		\[ \eqref{e-isolated-points-singularities-regularity} \implies \spt T_{p,\epsilon} \text{ is everywhere smooth.} \]
		Since $T_{p,\epsilon}$ generates $H_2(\overline{U}_p \setminus \{p\}; \Z)$, it follows that $T_{p,\epsilon}$ is a multiplicity one embedded submanifold, so the metric completion of $U_p \setminus T_{p,\epsilon}$ does not contain $p$. The claim follows with $T_\epsilon = \cup_{p \in \sing g} \spt T_{p,\epsilon}$.
	\end{proof}
	
	As a consequence of the claim, the metric completion of $(\Omega', g|_{\Omega'})$ is an element of $\Fcirc(\Sigma, \gamma)$, so
	\[ \frac{1}{8\pi} \int_{\p \Omega} H_g \, d\sigma = \frac{1}{8\pi} \int_{\p \Omega'} H_g \, d\sigma \leq \Lcirc(\Sigma, \gamma) = \LL(\Sigma, \gamma), \]
	where the last equality follows from Proposition \ref{p-equality-l-ll}.

	Now we study the rigidity case of \eqref{e-isolated-singularities-inequality}.

	First, we show $R(g) \equiv 0$ on $\Omega \setminus \sing g$. Indeed, whenever $R > 0$ at some point, then one can conformally decrease the scalar curvature, while maintaining scalar nonnegativity, the induced metric on the boundary, and increase the boundary mean curvature pointwise; this would produce 	a singular fill-in of $(\Sigma, \gamma)$ that violates \eqref{e-isolated-singularities-inequality}.
	
	Now we'll show that $\Ric(g) \equiv 0$. Let $h \in C^\infty(\operatorname{Sym}_2 T^* \Omega)$ be an arbitrary smooth symmetric 2-tensor that is supported away from $\Sigma \cup \sing g$. For concreteness, suppose that
	\[ \{ h \neq 0 \} \subset U \subset \subset \Omega  \setminus (\Sigma \cup \sing g). \]
	For small $t$, the symmetric 2-tensors
	\[ g_t := g - th \]
	are %$W^{1,p}$
	{$L^\infty$}
	metrics that are smooth away from $\sing g$. Recall that $R(g) \equiv 0$ on $\Omega \setminus \sing g$, and that $R(g_t) \equiv 0$ near $\sing g$ for all small $t$.
	
	Consider the first eigenvalue, $\lambda_t$, of $-\Delta_{g_t} + \tfrac18 R(g_t)$ on $\Omega$ with Dirichlet boundary conditions: $u_t|_\Sigma = 0$. Note that $\lambda_0 > 0$, so $\lambda_t > 0$ for small $t$. In particular, we can find %$u_t \in W^{2,>3}(\Omega)$
	{$u_t \in W^{1,p}(\Omega)$, $p > 1$ arbitrary, and smooth locally away from $\sing g$,}
	such that
	\begin{equation} \label{e-isolated-singularities-conformal-equation}
		\begin{array}{rl}
		- \Delta_{g_t} u_t + \tfrac18 R(g_t) u_t = 0 & \text{in } \Omega \\
		u_t = 1 & \text{on } \Sigma.
		\end{array}
	\end{equation}
	Note that $u_0 \equiv 1$ and $u_t > 0$ for small $t$. Thus,
	{fixing $p > 3$, by the Sobolev embedding theorem}
	$\wh g_t = u_t^4 g_t$ is a
	%$W^{1,>3}$
	{$L^\infty$}
	metric on $\Omega$ for small $t$. We claim that $(\Omega, \wh g_t)$ is a singular fill-in of $(\Sigma, \gamma)$ for small $t$. Indeed, $\sing \wh g_t \subset \sing g$, $\wh g_t|_{\Sigma} = \gamma$, and $R(\wh g_t) \equiv 0$ on $\Omega \setminus \sing g$. Finally,
	\begin{equation} \label{e-isolated-singularities-conformal-mean-curv}
		H_{\wh g_t} = H_g + 4 \tfrac{\p}{\p \nu} u_t.
	\end{equation}
	This is positive for small $t$, since $u_0 \equiv 1$, so, $(\Omega, \wh g_t)$ is a singular fill-in.
	
	Since $H_g$ is assumed to attain equality in \eqref{e-isolated-singularities-inequality}, \eqref{e-isolated-singularities-conformal-mean-curv} implies
	\begin{equation} \label{e-isolated-singularities-mean-curv-ineq-i}
		\int_{\partial \Omega} \tfrac{\p}{\p \nu} u_t \, d\sigma \leq 0
	\end{equation}
	for all small $t$, with equality at $t = 0$. %Recall that $u_t \in W^{2,p}(\Omega)$, $p > 3$, so $u_t \in C^{1,\alpha}(\Omega)$ by the Sobolev embedding theorem. (cf. Remark \ref{r-sing-b}.) In particular, an integration by parts together with \eqref{e-isolated-singularities-conformal-equation} yields:
	{
	Fix $t \geq 0$ and a smooth background metric $\bar g_t$ with
	\begin{equation} \label{e-isolated-singularities-background-gt}
		\bar c_t^{-1} \bar g_t \leq g_t \leq \bar c_t \bar g_t.
	\end{equation}
	For small $r > 0$, write $U^{(t)}_r$ for the $r$-neighborhood of $\sing g$ with respect to $\bar g_t$ . Recall that $u_t$ is smooth locally away from $\sing g$. The divergence theorem on $\Omega \setminus U^{(t)}_r$, \eqref{e-isolated-singularities-conformal-equation}, and \eqref{e-isolated-singularities-mean-curv-ineq-i} imply
	\[ 0 \geq \frac18 \int_{\Omega \setminus U^{(t)}_r} R(g_t) u_t \, dv_{g_t} + \int_{\partial U^{(t)}_r} \tfrac{\partial}{\partial \nu_{g_t}} u_t \, d\sigma_{g_t}, \]
	where, in the rightmost surface integral, $\nu_{g_t}$ is the normal pointing outside of $U^{(t)}_r$ and $d\sigma_{g_t}$ is the induced area form on $\partial U^{(t)}_r$, both taken with respect to $g_t$. Rearranging and bounding $|\tfrac{\partial}{\partial \nu} u| \leq |\nabla_{g_t} u|$,
	\[ \int_{\Omega \setminus U^{(t)}_r} R(g_t) u_t \, dv_{g_t} \leq 8 \int_{\partial U^{(t)}_r} |\nabla_{g_t} u_t| \, d\sigma_{g_t}. \]
	If $d\sigma_{\bar g_t}$ denotes the area form induced on $\partial U^{(t)}_r$ by $\bar g_t$, then \eqref{e-isolated-singularities-background-gt} implies
	\[ \int_{\Omega \setminus U^{(t)}_r} R(g_t) u_t \, dv_{g_t} \leq \bar c'_t \int_{\partial U^{(t)}_r} |\nabla_{\bar g_t} u_t| \, d\sigma_{\bar g_t}, \]
	for a fixed constant $\bar c'_t$ independent of $r$. We now observe that the left hand side is constant for small enough $r$, since $R(g_t) \equiv 0$ near $\sing g$, and the right hand converges to zero for at least one subsequence $r_i \to 0$ in view of $u_t \in W^{1,p}(\Omega)$. Therefore:
	}
	\[ \int_\Omega R(g_t) u_t \, dv_{g_t} \leq 0 \]
	for all small $t$, with equality at $t=0$. %(Excise small balls around $\sing g$ and send their radius to zero.)
	Recall that
	\[ \left[ \frac{d}{dt} R(g_t) \right]_{t=0} = \operatorname{div}_g \operatorname{div}_g h - \Delta_g \operatorname{Tr}_g h + \langle h, \Ric(g) \rangle_g, \]
	that $h \equiv 0$ near $\sing g$, $R(g) \equiv 0$ on $\Omega \setminus \sing g$, and $u_0 \equiv 1$. Thus differentiating the integral of $R(g_t) u_t$ and setting $t = 0$, we find that:
	\[ \int_\Omega \langle h, \Ric(g) \rangle_g \, dv_g \leq 0. \]
	Since we $h$ was arbitrary, we deduce that $\Ric(g) \equiv 0$ on $\Omega \setminus \sing g$. It follows that $\sing g = \emptyset$ by \cite{SmithYang92}. In particular, the result follows from the smooth case: \cite[Theorem 1.4]{M-M}.
\end{proof}

\section{A result in function theory} \label{s-isometry}

The rigidity statements of Theorems \ref{t-LC-1} and \ref{t-LC-2} will be  established using the following characterization of Euclidean balls in terms of their Green's functions, which might be of independent interest; see also \cite[Corollary 2.2]{AgostinianiMazzieri16}:

 \begin{thm} \label{t-isometry}
	 Suppose $(\Omega, g)$ is a compact, connected $n$-dimensional Riemannian manifold, $n \geq 3$, with boundary $\Sigma$ such that:
	 \begin{enumerate}
		 \item There exists an isometric immersion $\iota:  (\Omega, g) \rightarrow \R^n $;
		 \item there exists a  $p \in \Omega \setminus \Sigma$ and a $ \psi : \Omega \setminus \{ p \} \to \R$ such that
		 \[ \Delta_g \psi = 0 \text{ on } \Omega \setminus \{p\}, \; \psi = 0\text{ at } \Sigma, \; \psi = A d^{2-n} + O(1) \]
		 for some $A>0$, where $d = \dist_g(p, \cdot)$; and
		 \item $\psi$ satisfies the boundary condition
		 \begin{equation} \label{e-H-condition-g}
			\tfrac{\p}{\p \nu} \psi + c H = -b \ \mathrm{at} \ \Sigma,
		 \end{equation}
		 where $H$ is the mean curvature of $\Sigma$ with respect to the  unit normal $\nu$ pointing outside of $\Omega$, and $b\ge 0$, $c > 0$ are constants.
	 \end{enumerate}
	 Then $(\Omega,g)$ is isometric to a Euclidean ball in $\R^n$ with radius $r$ so that
	 \[ A(2-n)r^{2-n}+br+c(n-1)=0. \]
 \end{thm}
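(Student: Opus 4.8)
The plan is to prove that $(\Omega,g)$ must be a Euclidean ball by means of a monotonicity formula for $\psi$ along its level sets --- in the spirit of the potential-theoretic monotonicity formulas of Agostiniani--Mazzieri, of which \cite[Corollary 2.2]{AgostinianiMazzieri16} is an instance --- whose rigidity case, combined with the overdetermined boundary condition \eqref{e-H-condition-g}, forces the ball and pins down its radius.

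First the preliminaries. Hypothesis (1) forces $g$ to be flat, so $\Ric(g)\equiv 0$ (this is what makes the Bochner formula effective). Since $\psi\to+\infty$ at $p$ and $\psi=0$ on $\Sigma$, the maximum principle gives $\psi>0$ in $\Omega\setminus\Sigma$ and the Hopf lemma gives $\tfrac{\p}{\p\nu}\psi<0$ on $\Sigma$; in particular $|\nabla\psi|=-\tfrac{\p}{\p\nu}\psi$ on $\Sigma$, so \eqref{e-H-condition-g} reads $cH=|\nabla\psi|-b$ there. Because $\psi$ is harmonic, the flux $\int_{\{\psi=t\}}|\nabla\psi|\,d\sigma$ is the same for every regular value $t$, and the asymptotics $\psi=A\,d^{2-n}+O(1)$ at $p$ identify it as $(n-2)A\,|\mathbb{S}^{n-1}|$. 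It is convenient to also introduce, for $c>0$, the ``straightened'' potential $F_c:=(\psi/A+c)^{-2/(n-2)}$: it extends to a bounded function on $\Omega$ with $F_c\equiv c^{-2/(n-2)}$ on $\Sigma$, attaining its minimum $0$ at $p$ with $F_c\asymp\dist_g(p,\cdot)^2$ there, and a direct computation from $\Delta_g\psi=0$ gives that $\Delta_g F_c$ equals a positive function of $\psi$ times $|\nabla\psi|^2$, hence $\Delta_g F_c\ge 0$, so $F_c$ is subharmonic. In the model $(\Omega,g)=(B_r,g_0)$ one has $\psi=A(d^{2-n}-r^{2-n})$ and, for $c=r^{2-n}$, $F_c=d^2$ exactly, so $\nabla^2 F_c=2g$ is pure trace; establishing the corresponding rigidity $\nabla^2 F_c=\tfrac{\Delta_g F_c}{n}g$ on $\Omega$ is the goal.

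The heart of the argument is the monotonicity. The Bochner identity on the flat manifold, $\tfrac12\Delta|\nabla\psi|^2=|\nabla^2\psi|^2\ge\tfrac1n(\Delta\psi)^2=0$, together with the coarea formula and the first and second variation of the level sets $\Sigma_t=\{\psi=t\}$, yields monotonicity in $t\in(0,\infty)$ of a suitably weighted level-set integral $U(t)$ of $|\nabla\psi|$ (normalized so as to be constant in the model), with equality in the monotonicity inequality for all $t$ equivalent to the rigidity $\nabla^2 F_c=\tfrac{\Delta_g F_c}{n}g$. One then computes $\lim_{t\to\infty}U(t)$ from the asymptotics at $p$, so it depends only on $A$ and $n$; and one estimates $\lim_{t\to 0^+}U(t)$ --- which involves $\int_\Sigma H\,|\nabla\psi|^2\,d\sigma$ and lower-order boundary integrals --- by substituting $cH=|\nabla\psi|-b$ from \eqref{e-H-condition-g} and applying Hölder and power-mean inequalities against the fixed flux $\int_\Sigma|\nabla\psi|\,d\sigma=(n-2)A|\mathbb{S}^{n-1}|$. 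The monotonicity then squeezes the two values together, $U$ is constant, and the rigidity $\nabla^2 F_c=\tfrac{\Delta_g F_c}{n}g$ holds on $\Omega$ for the relevant $c$. This step --- choosing the weight so that $U$ has finite and explicitly computable limits at \emph{both} ends (the point singularity at $p$, where $|\nabla^2\psi|$ is non-integrable, and the boundary $\Sigma$, where $\psi$ degenerates to $0$), and then running the equality analysis --- is where the work lies; these two degeneracies are exactly what prevent a naive use of Reilly's formula or of a pointwise maximum principle.

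It remains to conclude. From $\nabla^2 F_c=\tfrac{\Delta_g F_c}{n}g$ on the connected $\Omega$, differentiating and contracting (a Schur/Codazzi argument, valid in dimension $\ge 2$) gives $\nabla^2 F_c=\lambda_0 g$ with $\lambda_0$ a constant, which is positive since $F_c$ is non-constant with an interior minimum at $p$. Then $|\nabla F_c|^2=2\lambda_0 F_c$, the level sets $\{F_c=s\}$ are totally umbilic with constant principal curvature $(2s/\lambda_0)^{-1/2}$, hence --- by the Gauss equation with flat ambient --- round spheres of radius $(2s/\lambda_0)^{1/2}$, and they foliate $\Omega$ down to the unique critical point $p$. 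Thus $\Omega$ is diffeomorphic to a ball and, being flat with this concentric round foliation (equivalently: $\Omega$ is now simply connected, so the isometric immersion $\iota$ is a global isometry onto its image by uniqueness of developing maps), is isometric to a Euclidean ball $B_r$ centered at $p$. On $B_r$, uniqueness for the boundary value problem forces $\psi=A(d^{2-n}-r^{2-n})$, so $\tfrac{\p}{\p\nu}\psi=A(2-n)r^{1-n}$ and $H=(n-1)/r$ on $\Sigma$; substituting into \eqref{e-H-condition-g} and multiplying by $r$ gives exactly $A(2-n)r^{2-n}+br+c(n-1)=0$.
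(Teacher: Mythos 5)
Your proposal takes a genuinely different route from the paper. The paper's proof is a short, elementary barrier argument that exploits the isometric immersion $\iota$ directly: set $\iota(p)=0$, let $R$ be the smallest radius with $\iota(\Omega)\subset\overline{B}_R(0)$, pull back the Euclidean Green's function $A|y|^{2-n}-AR^{2-n}$ along $\iota$, compare it with $\psi$ by the maximum principle, and at a touching point $q_0$ of $\iota(\Sigma)$ with $\partial B_R(0)$ combine $\tfrac{\p}{\p\nu}\psi\ge A(2-n)R^{1-n}$ with the convexity bound $H(q_0)\ge(n-1)R^{-1}$; the boundary condition \eqref{e-H-condition-g} then gives $f(R)\le 0$ for $f(x)=A(2-n)x^{2-n}+bx+c(n-1)$. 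A symmetric argument, using geodesics from $p$ to produce an inscribed Euclidean ball of radius $r\le R$, gives $f(r)\ge 0$, and strict monotonicity of $f$ forces $R=r$, pinning down the radius and yielding the isometry. By contrast, you propose to replace the barrier comparison with an Agostiniani--Mazzieri type monotonicity formula along the level sets of $\psi$ (Bochner plus first/second variation), using the equality case to force $\nabla^2 F_c$ pure trace, and then a Schur--Codazzi argument to get a concentric round foliation. That direction is legitimate --- the paper itself points to \cite[Corollary 2.2]{AgostinianiMazzieri16} as a comparable result --- and your peripheral computations (flatness of $g$, subharmonicity of $F_c$, flux computation, removability of the singularity of $\psi-Ad^{2-n}$, the Schur step giving $\nabla^2F_c=\lambda_0 g$, and the final radius formula from the explicit $\psi$ on $B_r$) are sound.

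However, there is a real gap exactly where you flag it: the existence of the weighted level-set quantity $U(t)$ with (i) a provable monotonicity on $(0,\infty)$, (ii) a finite, explicitly computable limit as $t\to\infty$ governed only by $A,n$, (iii) a finite limit as $t\to 0^+$ expressible through $\int_\Sigma H|\nabla\psi|^{\beta}\,d\sigma$-type integrals, and (iv) the assertion that substituting $cH=|\nabla\psi|-b$ and applying H\"older against the fixed flux squeezes (ii) and (iii) to equality --- none of this is carried out. Step (iv) in particular is delicate: the inequality coming from H\"older must go in the direction opposite to the one coming from monotonicity, and whether it does depends on the specific choice of weight and exponents, which you have not exhibited. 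Until that computation is done, the argument is not a proof. A secondary, more minor point: "$\Omega$ is simply connected, so $\iota$ is a global isometry onto its image" is not quite right as stated (a developing map of a simply connected flat manifold with boundary need not be injective in general); what actually closes this step is that the concentric foliation exhibits $\Omega$ as the diffeomorphic image of $\exp_p$ on a Euclidean ball without conjugate or focal points, and then $\exp_p$ is the desired isometry. Filling in (i)--(iv) would give a genuinely different and interesting proof, but as written the central step is missing. The paper's maximum-principle argument is shorter and avoids these issues entirely.
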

\begin{proof}
	Without loss of generality, let $\iota(p)= 0$ (the origin of $\R^n$). Let $Q = \iota(\Omega) \subset \R^n$. Let $R$ be the smallest radius so that $\ol B_R(0) \supset Q$. We claim that $\iota(\Sigma) \cap \p B_R(0) \neq \emptyset$. Suppose, by contradiction, that $\p B_R(0) \cap \iota(\Sigma)=\emptyset$. Let $y\in \p B_0(R)\cap Q$, which is nonempty, and let $q\in \ol \Omega \setminus \Sigma$ with $\iota(q)=y$. Since $\iota$ is an isometric immersion, there is an open set $B_{r_{y}}(y) \subset \iota(\Omega)$, for some $r_{y}>0$, because $\iota$ is a local isometry. This contradicts the fact that $\ol B_R(0) \supset Q$.
	
	Let $q_0\in \Sigma$ with $\iota(q_0)=y_0\in \p B_0(R)$. Let $\xi:B_R(0)\to \R$ be
	\[ \xi(y) := A|y|^{2-n}-AR^{2-n}, \]
	a positive harmonic on $B_R(0) \setminus \{0\}$ so that $\xi = 0$ at $\p B_R(0)$. Then $\wt \xi=\xi\circ\iota$ is also positive harmonic on $\Omega \setminus\iota^{-1}(0)$, $\wt\xi \geq 0$ at $\Sigma\setminus \iota^{-1}(o)$, and $\wt\xi \to \infty$ as $q \to \iota^{-1}(o)$. Moreover, $\wt \xi=Ad^{2-n}+O(1)$ near $p$. By the maximum principle we have $\wt\xi\ge \psi$, and hence
	\bee
		\tfrac{\p}{\p \nu} \psi \ge \tfrac{\p}{\p \nu} \wt\xi = A(2-n)R^{1-n}.
	\eee
	On the other hand, by the definition of $R$ as the smallest enclosing radius, $	H(q_0) \ge (n-1) R^{-1}$. By \eqref{e-H-condition-g}, we conclude that
	\[ -b=\tfrac{\p}{\p \nu} \psi + c H\ge  A(2-n)R^{1-n}+c(n-1) R^{-1}. \]
	Hence
	\be\label{e-isometry-1}
		0\ge A(2-n)R^{2-n}+bR+c(n-1).
	\ee
	Next, let $\a$ be a geodesic from $p$ parametrized by arc length defined on $[0,t_\a]$ so that $\a([0,t_\a))\subset \Omega \setminus\Sigma$ and $\a(t_\a)\in \Sigma$. Let
	\[ r=\inf\{t_\a |\ \a \text{ is a geodesic from } p \}>0. \]
	Let $D(r)$ denote the union of all geodesics from $p$ with length $< r$. This is an open set as it is the image of $\exp_p(B_r(0))$ and $\exp_p$ is a local diffeomorphism. We claim that $\iota$ is injective in $\ol D(r)$. If $(\iota(\a)\cap\iota(\beta))\setminus\{o\} \neq\emptyset$, then $\iota(\a)=\iota(\beta)$ because they are straight lines from $o$. But $\iota$ is a local isometry, and thus $\a=\beta$. On the other hand, $\iota$ must be injective on $\a$. So $\iota$ is injective and $\iota(D(r)) = B_r(0)$, $\p \ol D(r) = \p B_r(0)$. Note that $r\le R$.  By the definition of $D(r)$ we conclude that there exists $y_1\in \p B_0(r)$ so that $y_1=\iota(q_1)$ for some $q_1\in \p\Omega$. Let
	\[ \zeta(y)=A|y|^{2-n}-Ar^{2-n}. \]
	Then $\zeta$ is positive harmonic in $B_0(r)$ so that $\zeta=0$ on $\p B_0(r)$. Since $\iota$ is bijective in $D(r)$, we conclude that $\zeta\circ\iota\le \psi$ in $D(r)$. Then as in the above argument we have at $q_1$,
	\be\label{e-isometry-2}
		0\le A(2-n)r^{2-n}+br+c(n-1).
	\ee
	Since $A>0$, $b\ge0$, the function $f(x)=A(2-n)x^{2-n}+bx+c(n-1)$ is strictly increasing on $(0,\infty)$. Since $r\le R$,  by \eqref{e-isometry-1}, \eqref{e-isometry-2}, we conclude that $R=r$ and is the unique positive root of $f(x)=0$ because $c>0$.
		
	It remains to prove $D(r) = \Omega \setminus \Sigma$.  If $\Omega \setminus (\Sigma \cup D(r)) \neq \emptyset$, then there is $q\in (\Omega \setminus \Sigma) \cap \p D(r)$, so $\iota(q)\in \p B(r)$. But $\iota$ is a local isometry, so $\iota(\Omega)\setminus \ol B_0(r) \neq \emptyset$, a contradiction.
\end{proof}

\begin{rem} \label{r-isometry}
	From the proof of the theorem, without assuming $\frac{\p}{\p\nu}\psi+cH \equiv -b$, we still have (in the notation of the proof)
	\begin{align*}
		\sup_{\Sigma} \left( \tfrac{\p}{\p\nu}\psi+cH \right) & \ge A(2-n)R^{1-n}+c(n-1)R^{-1}, \\
		\inf_{\Sigma} \left( \tfrac{\p}{\p\nu}\psi+cH \right) & \le A(2-n)r^{1-n}+c(n-1)r^{-1}.
	\end{align*}
\end{rem}

\section{Proof of Theorems \ref{t-LC-1}, \ref{t-LC-2}, Corollaries \ref{c-capacity-Gauss}, \ref{c-LC-1-vs-szego}, {\ref{c-localization-bray}}} \label{s-LC}

We want to apply the results in section \ref{s-singular} to asymptotically flat manifolds with compact boundary. First, we prove Theorem \ref{t-LC-1}. Arguing as in Section \ref{s-hf-asymptotics}, the metric completion of $(M, \wh g)$, $\wh g := \phi^4 g$, is a smooth manifold which is diffeomorphic to the one-point compactification $\Omega$ of $M$, and carries a metric $\wh g$ which is smooth away from a point $p \in \Omega$ that corresponds to $M$'s infinity.

\begin{lemma} \label{l-compactification}
	The metric $\wh g$ extends across $p$ to a $W^{1,p}$ metric, $p > 6$.
\end{lemma}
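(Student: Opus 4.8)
The plan is to analyze $\wh g = \phi^4 g$ near the added point $p$ by pulling it back under the conformal inversion. Fix asymptotically flat coordinates $x$ on $M\setminus K \approx \R^3\setminus\overline B_1(0)$ as in Definition \ref{d-AF} (so $g_{ij}=\delta_{ij}+O(|x|^{-\tau})$ with matching decay on derivatives up to order two, $\tau>\tfrac12$), and let $\Phi:y\mapsto x=y/|y|^2$ be the inversion; it identifies a punctured neighborhood of $p$ with a punctured neighborhood of $0\in\R^3$, and the smooth structure on $\Omega$ near $p$ is the one for which $\Phi$ extends to a diffeomorphism with $\Phi(0)=p$ (cf.\ Section \ref{s-hf-asymptotics}). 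Writing $v:=|x|\phi$, we have $\phi\circ\Phi=|y|\,(v\circ\Phi)$, so
\[ \Phi^*\wh g=(\phi\circ\Phi)^4\,\Phi^*g=(v\circ\Phi)^4\big(|y|^4\,\Phi^*g\big)=(v\circ\Phi)^4\Big(g_0+|y|^4\,\Phi^*(g-g_0)\Big), \]
using $\Phi^*g_0=|y|^{-4}g_0$ (inversion is conformal). It therefore suffices to show that $v\circ\Phi$ and $|y|^4\,\Phi^*(g-g_0)$ are bounded elements of $W^{1,p}(B_\rho(0))$ for some $p>6$, with $v\circ\Phi$ bounded away from $0$; then the right-hand side above is a uniformly elliptic $W^{1,p}$ metric across $p$, while $\wh g$ is smooth away from $p$.

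For the curvature factor, the inversion Jacobian satisfies $|\partial_y\Phi|=O(|y|^{-2})$ and is homogeneous of degree $-2$; combined with the decay of $g-g_0$ this gives, by a routine computation, that $\Phi^*(g-g_0)$ has components $O(|y|^{\tau-4})$ with $y$-derivatives $O(|y|^{\tau-5})$, so $|y|^4\,\Phi^*(g-g_0)$ is bounded with derivative $O(|y|^{\tau-1})$, which lies in $L^p(B_\rho(0))$ whenever $p(1-\tau)<3$. For the factor $v\circ\Phi$, I would use Lemma \ref{l-phi}, which gives $v(x)\to\CgSM>0$ as $|x|\to\infty$, together with the refined decay $v-\CgSM=O(|x|^{-\sigma})$ and $\partial_x v=O(|x|^{-1-\sigma})$ for some $\sigma:=\min(\tau,1)>\tfrac12$; pulling back, $v\circ\Phi-\CgSM=O(|y|^\sigma)$ and $\partial_y(v\circ\Phi)=(\partial_x v)\circ\Phi\cdot\partial_y\Phi=O(|y|^{1+\sigma})\cdot O(|y|^{-2})=O(|y|^{\sigma-1})\in L^p(B_\rho(0))$ whenever $p(1-\sigma)<3$. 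Choosing $p$ with $6<p<3/(1-\min(\sigma,\tau))$ — a nonempty range since $\min(\sigma,\tau)>\tfrac12$ — then finishes the proof.

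The main obstacle is the refined decay of $v=|x|\phi$, namely $v-\CgSM=O(|x|^{-\sigma})$ and $\partial_x v=O(|x|^{-1-\sigma})$ with $\sigma>\tfrac12$. The crucial point in the gradient bound is a cancellation: $\partial_x v=\hat x\,\phi+|x|\,\partial_x\phi$, and the two would-be $O(|x|^{-1})$ leading terms cancel because $\partial_x(\CgSM|x|^{-1})=-\CgSM\,\hat x\,|x|^{-2}$ exactly in the Euclidean metric, so the remainder is one order faster. To get the base decay I would write $\Delta_g=\Delta_{g_0}+L$ with $L$ second order having coefficients of size $O(|x|^{-\tau})$ (top order) and $O(|x|^{-\tau-1})$ (first order), so that the crude bounds $\phi=O(|x|^{-1})$, $\partial\phi=O(|x|^{-2})$, $\partial^2\phi=O(|x|^{-3})$ (from maximum-principle barriers and interior elliptic estimates) give $\Delta_{g_0}\phi=-L\phi=O(|x|^{-\tau-3})$; splitting $\phi$ into the Newtonian potential of this source plus an exterior $\Delta_{g_0}$-harmonic remainder and expanding each yields $\phi=A|x|^{-1}+O(|x|^{-1-\sigma})$ with $\sigma=\min(\tau,1)$, and then rescaled interior Schauder estimates upgrade this to the matching gradient decay (with $A=\CgSM$ by Lemma \ref{l-phi}). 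This decay is standard for harmonic functions on asymptotically flat manifolds of order $\tau>\tfrac12$ and may alternatively be cited; the rest is bookkeeping with the conformal inversion.
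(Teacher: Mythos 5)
Your proposal is correct and follows essentially the same route as the paper's proof: pull back $\wh g$ under the conformal inversion, write the result as $(v\circ\Phi)^4\bigl(g_0 + |y|^4\,\Phi^*(g-g_0)\bigr)$ with $v=|x|\phi$, and check that both factors are bounded (the first bounded away from $0$) with $y$-derivative $O(|y|^{\tau-1})\in L^p(B_\rho)$ for $p<3/(1-\tau)$, which exceeds $6$ since $\tau>\tfrac12$. The one point worth flagging is that the refined decay you treat as the ``main obstacle'' in your closing paragraph — $v-\CgSM=O(|x|^{-\tau})$ and $\partial_x v=O(|x|^{-1-\tau})$ via the $\hat x\,\phi+|x|\,\partial_x\phi$ cancellation — is already an immediate consequence of the $O_2$-expansion $\phi=\CgSM|x|^{-1}+O_2(|x|^{-1-\tau})$ stated in Lemma~\ref{l-phi}, so the source-splitting re-derivation of that expansion, while correct, is redundant (you yourself note it ``may alternatively be cited,'' and the paper simply reads it off from the lemma).
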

\begin{proof}
Using the inversion $y=x/|x|^2$, $p$ will correspond to the origin $y=0$, by \eqref{e-inversion-map-1}, if we let  $h_{ij}=g(\p_{y^i},\p_{y^j})$ and $g_{ij}=g(\p_{x^i},\p_{x^j})$, then
\[ \phi^4h_{ij} = \lf(\frac{\C}{|x|}+O(|x|^{-1-\tau})\ri)^4h_{ij} = \C^4\delta_{ij}+O(|y|^\tau). \]
Hence $\ol g$ is continuous at the origin.  On the other hand,
\bee
\frac{\p}{\p y^p}=\frac{\p x^q}{\p y^p}\frac{\p}{\p x^p}=\lf(\frac{\delta_{pq}}{|y|^2}-\frac{2y^py^q}{|y|^4}\ri)\frac{\p}{\p x^q}.
\eee
Hence
\begin{align*}
\phi^4 h_{ij}
	& = (\C+A)^4\lf( \delta_{ik} -\frac{2y^iy^k}{|y|^2}\ri)\lf( \delta_{jl}-\frac{2y^jy^l}{|y|^2}\ri)(\delta_{kl}+B)\\
	& = (\C+A)^4B\lf( \delta_{ik} -\frac{2y^iy^k}{|y|^2}\ri)\lf( \delta_{jl}-\frac{2y^jy^l}{|y|^2}\ri) +(C+A)^4\delta_{ij}.
\end{align*}
where
\[ |A| + |y| \cdot |\p_yA| + |B| + |y| \cdot |\p_yB|=O(|y|^\tau). \]
Hence $|\p_y (\phi^4 h_{ij})| = O(|y|^{\tau-1})$. The results follows since $\tau > \tfrac12$.
\end{proof}

\begin{proof}[Proof of Theorem \ref{t-LC-1}]
	Recall that we've set
	\[ (\Omega, \wh g) := \text{ metric completion of } (M, \wh g). \]
	By Lemma \ref{l-compactification}, $\wh g$ is a $W^{1,p}$, $p > 6$, metric on $\Omega$ with $\sing \wh g \subset \{p\}$. Since $\Omega$ is diffeomorphic to the one-point compactification of $M$, $\p \Omega \approx \Sigma$. In fact, since $\phi = 1$ on $\Sigma$, $\wh g$ induces the same metric, $\gamma$, on $\p \Omega \approx \Sigma$ as does $g$. The scalar curvature of the conformal metric is
	\[ R(\wh g) = 8 \phi^{-5} (-\Delta_g \phi + \tfrac18 R(g) \phi) = R(g) \phi^{-4} \geq 0 \text{ on } \Omega \setminus \{p\}. \]
	The mean curvature of $\Sigma$ with respect to $\wh g$ and the unit normal $\nu$ pointing outside of $\Omega$ is $H_{\wh g} = -H_g + 4\tfrac{\p}{\p\nu} \phi$ on $\p \Omega$, 	which is \emph{positive} by \eqref{e-LC-1-boundary-assumption}, so  $(\Omega, \wh g)$ is a singular fill-in of $(\Sigma, \gamma)$. 	Thus, from Theorem \ref{t-fsing} or Proposition \ref{p-isolated-singularities-inequality-rigidity} and \eqref{e-capacity-meaning}:
	\begin{multline*}
		\LL(\Sigma, \gamma) \geq \frac{1}{8\pi} \int_{\p \Omega} H_{\wh g} \, d\sigma = \frac{1}{8\pi} \int_{\p \Omega} (-H_g + 4 \tfrac{\p}{\p \nu} \phi) \, d\sigma \\
			= - \frac{1}{8\pi} \int_{\p \Omega} H_g \, d\sigma + 2 \, \CgSM,
	\end{multline*}
	which gives \eqref{e-LC-1}, and thus (1).

	If equality holds in \eqref{e-LC-1}, then Propositions \ref{p-fsing-rigidity} or \ref{p-isolated-singularities-inequality-rigidity} imply that $(\Omega, \wh g)$ is a mean-convex handlebody with a flat metric.
	
	Finally, suppose $\Sigma$ is a topological sphere. First, assume that equality holds in \eqref{e-LC-1}. Then, by the previous paragraph, $(\Omega, \wh g)$ is a genus-0 mean-convex flat handlebody, i.e., a 3-dimensional ball. As such, it can be isometrically immersed in $(\R^3, g_0)$.

	Let $\psi := \phi^{-1}-1$. Note that, by the formula for the transformation of the Laplacian among the conformal metrics $\wh g = \phi^4 g$:
	\[ \Delta_{\wh g} \psi = \phi^{-5} (\Delta_{g} (\phi \psi) - \psi \Delta_g \phi) = 0. \]
	Moreover,
	\[ \psi = 0 \text{ on } \p \Omega, \]
	\[ \psi = \CgSM \dist_{\wh g}(\cdot, p)^{-1} + O(1) \text{ near } p, \]
	the latter holding in view of Lemma \ref{l-phi}. Finally,
	\begin{equation} \label{e-LC-1-blowdown-H}
		H_{\wh g} + 4 \tfrac{\p}{\p \nu} \psi = -H_g \text{ on } \p \Omega.
	\end{equation}
	Since $H_g \geq 0$ is a constant, it follows from our characterization of Euclidean balls in Theorem \ref{t-isometry} that $(\Omega, \wh g)$ is isometric to a standard round ball in $\R^3$ of radius $r>0$ for some $r>0$.  Hence $(M,g)$ is isometric to $(B_r(0) \setminus \{0\},(\psi+1)^4g_0)$ where $g_0$ is the standard Euclidean metric. Using the Kelvin transform, one concludes that $(M,g)$ is Riemannian Schwarzschild outside of a coordinate sphere, as asserted.

	For the converse implication, see the explicit computation in Subsection \ref{s-schwarzschild-computations}, which shows that the left and right hand sides of \eqref{e-LC-1} coincide in the case of Schwarzschild and coordinate spheres $\Sigma$.
\end{proof}

\begin{proof}[Proof of Corollary \ref{c-capacity-Gauss}]
	This is Theorem \ref{t-LC-1} with $H_g = 0$ on $\Sigma$.
\end{proof}

\begin{proof}[Proof of Corollary \ref{c-LC-1-vs-szego}]
	Since $H_g < 4 |\nabla \phi|$ on $\Sigma$,
	\[ 2 \, \mathcal{C}_g(\Sigma, M \setminus \Omega) \leq \LL(\Sigma, \gamma) + \frac{1}{8\pi} \int_\Sigma H_g \, d\sigma \]
	by Theorem \ref{t-LC-1}. The second term of the right hand side is $\leq$ the first by $H_g > 0$ and Definition \ref{d-f-l}. The result follows from \eqref{e-ll-h-gauss}.
	
	For the rigidity case, Theorem \ref{t-LC-1} (2) and Theorem \ref{t-fsing} imply that the blowdown $(M \setminus \Omega, \widehat{g})$ is Euclidean. On the other hand, so is the fill-in $(\Omega, g)$. We thus have $H_g = H_{\widehat{g}}$ on $\Sigma$ by the Cohn--Vossen theorem (see \cite{Nirenberg53}). Plugging into \eqref{e-LC-1-blowdown-H} we get $H_{\widehat{g}} + 2 \tfrac{\p}{\p \nu} \psi = 0$ on $\p \Omega$. It follows from Theorem \ref{t-isometry} that $(\Omega, g)$ and $(M \setminus \Omega, \widehat{g})$ are geodesic balls in $(\R^3, g_0)$.
\end{proof}

\begin{proof}[Proof of Theorem \ref{t-LC-2}]
	We begin by applying the idea to reflect $M$ through its boundary, as in Bunting--Masood-ul-Alam \cite{BuntingMasood87}. Let
	\[ w := 1-\phi \text{ on } M. \]
	Let $(\wt M, \wt g)$ be the mirror image of $(M,g)$ and
	\[ (N,h) := (M,g) \sqcup (\wt M, \wt g) / \sim, \]
	where $\sim$ denotes the identification of $\Sigma \subset M$ with its mirror image in $\wt M$. (Note that $h$ is only Lipschitz across $\Sigma$.) We denote the infinities coming from $M$, $\wt M$ as $\infty_M$, $\infty_{\wt M}$, respectively. Let $\wt w$ be the harmonic function which is the mirror image of $w$, i.e., near $\Sigma$, we have $\wt w(t,x')=-w(-t,x')$ in exponential normal coordinates $(t, x')$ over $\Sigma$, with $t = \dist_g(\Sigma, \cdot)$. Define $u : N \to (0, 1)$ to be
	\bee
		u(x) :=
			\begin{cases}
				\tfrac12 (1+w) \text{ on } M, \\
				\tfrac12 (1+\wt w) \text{ on } \wt M.
			\end{cases}
	\eee
	Note that $u(x) \to 1$ as $x \to \infty_M$, while $u(x) \to 0$ as $x \to \infty_{\wt M}$. Since $w=\wt w=0$ on $\Sigma$, $(N, u^4 h)$ is smooth up to (but not across) $\Sigma$, Lipschitz across $\Sigma$, and has scalar curvature
	\[ R(u^4 h) = 8 u^{-5} (-\Delta_h u + \tfrac18 R(h) u) \geq 0 \text{ on } N \setminus \Sigma. \]
	The mean curvatures induced by $u^4 g$, $u^4 \wt g$ on the two sides of $\Sigma$, with respect to the unit normal $\eta$ pointing toward $\infty_M$, are
	\begin{equation} \label{e-LC-2-H-plus}
		H_{\Sigma,u^4 g} = u^{-2} (H_{\Sigma,g} + 4 \tfrac{\p}{\p \eta} \log u) = 4 (H_{\Sigma,g} - 4 \tfrac{\p}{\p \eta} \phi),
	\end{equation}
	\begin{equation} \label{e-LC-2-H-minus}
		H_{\Sigma,u^4 \wt g} = u^{-2} (-H_{\Sigma,g} + 4 \tfrac{\p}{\p \eta} \log \wt u) = 4 (-H_{\Sigma,g} - 4 \tfrac{\p}{\p \eta} \phi).
	\end{equation}
	Since $H_{\Sigma,g} \leq 0$ with respect to $\eta$,
	\begin{equation} \label{e-LC-2-H-jump}
		\eqref{e-LC-2-H-plus}-\eqref{e-LC-2-H-minus} \implies H_{\Sigma,u^4g} \leq H_{\Sigma,u^4 \wt g}.
	\end{equation}
	
	With $c$ as in the theorem statement, let $N(c) := \{ u \leq c\} \subset N$. Note that Theorem \ref{t-fsing} applies to the metric completion of $(u^4h, N(c))$:	
	\begin{enumerate}
		\item By \eqref{e-LC-2-H-jump}, we have a $(\mathbf{C})(\mathbf{a})$ singularity on $\Sigma$.
		\item By the asymptotic behavior of $\phi$ (and $u$) from Lemma \ref{l-phi}, $u^4 h$ can be compactified by adding a point $p_\infty$ that corresponds to $\infty_{\wt M}$, at which $u^4 h$ has a $(\mathbf{C})(\mathbf{b})$ singularity; see Lemma \ref{l-compactification}.
		\item By the formula for the change in mean curvatures under conformal metric changes and assumption \eqref{e-LC-2-boundary-assumption},
		\begin{equation} \label{e-LC-2-H-u4g}
		H_{\Sigma(c),u^4 g} = c^{-2} ( H_{\Sigma(c),g} - 2 c^{-1} \tfrac{\p}{\p \eta_c} \phi)
		\end{equation}
		is \emph{positive}, where $H_{\Sigma(c),g}$ is also computed with respect to the normal $\eta_c$ pointing toward $\infty_M$.
	\end{enumerate}

	By the scaling properties of the total boundary mean curvature, and $\p N(c) = \Sigma(c)$, we see that Theorem \ref{t-fsing} implies:
	\begin{multline} \label{e-LC-2-LL-H}
		c^2 \LL(\Sigma(c), \gamma_c) = \Lambda(\Sigma(c), c^4\gamma_c) \ge \frac1{8\pi}\int_{\Sigma(c)} H_{\Sigma(c),u^4 g} \, c^4 d\sigma_c \\
		\implies \LL(\Sigma(c), \gamma_c) \geq \frac{c^2}{8\pi} \int_{\Sigma(c)} H_{\Sigma(c), u^4 g} \, d\sigma_c.
	\end{multline}

Together, \eqref{e-LC-2-H-u4g}, \eqref{e-LC-2-LL-H}, imply:
	\[ \LL(\Sigma(c), \gamma_c) \geq \frac{1}{8\pi} \int_{\Sigma(c)} H_{\Sigma(c),g} \, d\sigma_c - \frac{c^{-1}}{4\pi} \int_{\Sigma(c)} \tfrac{\p}{\p \eta_c} \phi \, d\sigma_c. \]
	We can integrate by parts using \eqref{e-capacity-pde},  \eqref{e-capacity-meaning} to get:
	\[ \LL(\Sigma(c), \gamma_c) \geq \frac{1}{8\pi} \int_{\Sigma(c)} H_{\Sigma(c),g} \, d\sigma_c + c^{-1} \, \CgSM, \]
	which is precisely \eqref{e-LC-2}.

	Now suppose that equality holds in \eqref{e-LC-2}. Then equality also holds in \eqref{e-LC-2-LL-H}, and thus in the invocation of Theorem \ref{t-fsing} to $(u^4h, N(c)\cup \{p_\infty\})$. Thus, by Proposition \ref{p-fsing-rigidity}, $u^4h$ is smooth and flat across $\infty_{\wt M}$, though perhaps not across $\Sigma$. Nonetheless, $H_{\Sigma,u^4g}= H_{\Sigma,u^4\wt g}$ along $\Sigma$, so
	\begin{equation} \label{e-LC-2-H-zero}
		H_{\Sigma,g} \equiv 0 \text{ on } \Sigma,
	\end{equation}
	by virtue of \eqref{e-LC-2-H-jump}.
	
	Recall that, for the rigidity case, we're additionally assuming $\Sigma(c)$ is a sphere of positive Gauss and mean curvature.

	Since equality holds in \eqref{e-LC-2-LL-H} and $(\Sigma(c), c^4 \gamma_c)$ has positive Gauss curvature, the results and methods in \cite{ShiTam02} allow one to attach $(N(c), u^4 h)$ to the exterior of a convex set in $\R^3$, along $(\Sigma(c), c^4 \gamma_c)$, to obtain a smooth 3-manifold $\wt N$ with piecewise smooth metric $\mathfrak{g}$. By the monotonicity formula in \cite{ShiTam02}, we conclude that the manifold has only one end and zero ADM mass. By \cite[Theorem 2]{McFeronSzekelyhidi12}, $(\wt N, \mathfrak{g})$ is $C^{1,\alpha}$ isometric to $(\R^3, g_0)$, i.e., there is a $C^{1,\alpha}$ diffeomorphism $\Phi : \wt N \to \R^3$ so that $\Phi : (\wt N, \mathfrak{g}) \to (\R^3, g_0)$ is a Riemannian isometry. Moreover, $\Phi$ is smooth away from $\Sigma$ by Myers--Steenrod \cite{MyersSteenrod39}.
		
	We may assume $\Phi(\infty_{\wt M}) = 0$ (the origin). Note that $\tfrac{\p}{\p \eta} \phi < 0$ on $\Sigma$. For $t>0$ small enough, $\{\phi= 1-t\}$ is a smooth graph over $\Sigma$ contained in a small neighborhood of $\Sigma$. Observe that for any $\eps>0$, there is $t_0 \in (0,1)$ such that if $0<t<t_0$, $t \in (0, t_0) \implies |H_g(t)| \leq \eps$,	where $H_g(t)$ is the mean curvature of $\{\phi= 1-t\}$; here we've used \eqref{e-LC-2-H-zero}. Let $D(t)$ be the domain in $\wt N$ bounded by $\{\phi = 1-t\}$.

	If $R(t)$ is the radius of the smallest ball centered at the origin and which contains $\Phi(D(t))$, then by Remark \ref{r-isometry} we have
	\[ \eps \ge -\tfrac12 \, \CgSM  (R(t))^{-2}+\tfrac12R(t) \\
		\implies 2 \eps R(t)^{2}+ \CgSM \ge R(t). \]
	Conversely, let $r(t)$ be the radius of the largest ball centered at the origin and contained in $\Phi(D(t))$. By Remark \ref{r-isometry}, we similarly have
	\[ r(t)\ge -2\e r(t)^{2}+ \CgSM. \]
	Since $R(t)$, $r(t)$ are uniformly bounded, we may send $\eps \to 0$ and conclude that $\Phi(\wt M)$ is a ball of radius $\CgSM$. As before we conclude that $(M,g)$ is Riemannian Schwarzschild outside the horizon.
\end{proof}

{
\begin{proof}[Proof of Corollary \ref{c-localization-bray}]
	The fact that the set of $c$ for which $\Sigma_c$ is a sphere of positive Gauss and mean curvatures contains an interval of the form $[c_0, 1)$, $c_0 \in (\tfrac12, 1)$, follows from Lemma \ref{l-phi}. By \eqref{e-ll-h-gauss},
	\[ \LL(\Sigma_c, \gamma_c) - \frac{1}{8\pi} \int_{\Sigma_c} H_g \, d\sigma_c = \frac{1}{8\pi} \int_{\Sigma_c} (H_{g_0} - H_g) \, d\sigma_c, \]
	where $H_{g_0}$ denotes the mean curvature of $\Sigma_c$ when embedded, with its induced metric $\gamma_c$, into $(\R^3, g_0)$.
\end{proof}
}

\section{Results in Schwarzschild manifolds and some examples} \label{s-schwarzschild}

\subsection{Some computations} \label{s-schwarzschild-computations}

Let $(M_m^3, g_m)$, $m > 0$, be a (doubled) Riemannian Schwarzschild manifold:
\[ M_m^3  = \R^3 \setminus \{0 \}, \; g_m = \left( 1 + \frac{m}{2r} \right)^4 g_0. \]
Here, $g_0$ is the Euclidean metric on $\R^3 \setminus \{0\}$, and $r = r(x) := |x|$. Consider $\Sigma = \{ r = r_0 \}$, $M_m^\Sigma = \{ r \geq r_0 \}$, for $r_0 \in (0, \infty)$. The metric $\gamma$ induced on $\Sigma$ is that of a round sphere of radius:
\begin{equation} \label{e-schwarzschild-sphere-area-radius}
	r_A(\Sigma) = \left( 1 + \frac{m}{2r_0} \right)^2 r_0.
\end{equation}
In the embedding $(\Sigma, \gamma) \hookrightarrow (\R^3, g_0)$ we have mean curvature
\begin{equation} \label{e-schwarzschild-sphere-H0}
	H_{g_0} = 2 \left( 1 + \frac{m}{2r_0} \right)^{-2} r_0^{-1}.
\end{equation}
The mean curvature $H_{g_m}$ of $\Sigma$ in Schwarzschild with respect to the unit normal pointing to $\infty$ is found by a conformal computation to be
\begin{equation} \label{e-schwarzschild-sphere-H}
	H_{g_m} = 2 \left( 1 + \frac{m}{2r_0} \right)^{-3} \left( 1 - \frac{m}{2r_0} \right) r_0^{-1}.
\end{equation}
Therefore,
\begin{equation} \label{e-schwarzschild-sphere-mby}
	\frac{1}{8\pi} \int_\Sigma H_{g_0} \, d\sigma + \frac1{8\pi} \int_\Sigma H_{g_m} \, d\sigma =  2 \,  r_0 + m.
\end{equation}
The boundary capacity potential for $\Sigma \subset (M_m^\Sigma, g_m)$ is:
\begin{equation} \label{e-schwarzschild-sphere-phi}
	\phi = \frac{2r_0}{m} \left( 1 + \frac{m}{2r_0} \right) \left[ 1 - \left( 1 + \frac{m}{2r} \right)^{-1} \right] = \left( r_0+\frac m2 \right)r^{-1}+O(r^{-2}).
\end{equation}
Thus,
\begin{equation} \label{e-schwarzschild-sphere-capacity}
	\mathcal{C}_{g_m}(\Sigma, M_m^\Sigma) = r_0 + \frac{m}{2}.
\end{equation}
Thus   in terms of $r_A(\Sigma)$ from \eqref{e-schwarzschild-sphere-area-radius} by solving for $r_0$ in terms of $r^{\operatorname{Area}}_0$:
\begin{equation} \label{e-schwarzschild-sphere-capacity-area-radius}
	\mathcal{C}_{g_m}(\Sigma, M_m^\Sigma) = \frac{r_A(\Sigma)}{2} \left( 1 \pm \sqrt{1 - \frac{2m}{r_A(\Sigma)}} \right),
\end{equation}
where $\pm$ is a ``$+$'' when $r > \tfrac12 m$, and a ``$-$'' when $r < \tfrac12 m$. In any case, by \eqref{e-schwarzschild-sphere-mby}, \eqref{e-schwarzschild-sphere-capacity}:
\begin{equation} \label{e-schwarzschild-capacity-identity-new}
	\LL(\Sigma, \gamma) + \frac{1}{8\pi} \int_{\Sigma} H_{g_m} \, d\sigma \equiv 2 \, \mathcal{C}_{g_m}(\Sigma, M_m^\Sigma)
\end{equation}
no matter what $r_0 \in (0, \infty)$ was to begin with.

\begin{rem} \label{r-negative-schwarzschild}
	The computations above also work for $m = 0$ and $m < 0$ with $M_m := \R^3$, $M_m := \R^3 \setminus \{ r \leq \tfrac12 |m| \}$, respectively.
\end{rem}

\subsection{Proofs of Theorem \ref{t-szego-schwarzschild} and its corollaries }

\begin{proof}[Proof of Theorem \ref{t-szego-schwarzschild}]
	We symmetrize as in the proof of the Poincar\'e--Faber--Szeg\"o inequality \eqref{e-szego} (see \cite{PolyaSzego51}), but we will use H. Bray's isoperimetric inequality in $(M_m, g_m)$ \cite{Bray01} instead of the classical isoperimetric inequality in $(\R^3, g)$. In what follows, for any surface $\Sigma$ homologous to the horizon $\Sigma_H \subset (M_m, g_m)$ we let $|\Sigma|$ denote its area and $V(\Sigma)$ denote the signed volume bounded between $\Sigma$, $\Sigma_H$.
	
	Let $\phi$ be the harmonic potential for $\mathcal{C}_{g_m}(\Sigma, M_m^\Sigma)$, extended as $\phi \equiv 1$. For $t \in (0, 1]$, let $\Omega_t := \{ \phi \geq t \}$, let $\Sigma_t := \p \Omega_t$, and $V(t) := V(\Sigma_t)$. It is not hard to see that
	\begin{equation} \label{e-szego-schwarzschild-sigma-t}
		\Sigma_t \text{ is an embedded surface, } V'(t) = \int_{\Sigma_t} \frac{d\sigma_t}{|\nabla \phi|} \text{ for a.e. } t \in (0,1),
	\end{equation}
	where $d\sigma_t$ is the area element of $\Sigma_t$. Now let $\Sigma_t^*$ denote the unique rotationally symmetric sphere enclosing signed volume $V(t)$ with $\Sigma_H$; denote by $\Omega_t^*$ the corresponding annular region with $\p \Omega_t^* = \Sigma_t^*$ and signed volume $V(t)$. Also define $\phi^* : M_m \to [0,1]$ by
	\[ \phi^* = t \text{ on } \Sigma_t^* \text{ for } t \in (0, 1], \; \phi^* \equiv 1 \text{ on } \Omega_1^*. \]
	It is not hard to see that $\phi^*$ is Lipschitz, and
	\begin{equation} \label{e-szego-schwarzschild-comparison}
		|\nabla \phi^*| = \text{const on } \Sigma_t^*, \; V'(t) = \int_{\Sigma_t^*} \frac{d\sigma_t^*}{|\nabla \phi^*|} \text{ for } t \in (0, 1),
	\end{equation}
	where $d\sigma_t^*$ is the area element of $\Sigma_t^*$. By the coarea formula, \eqref{e-szego-schwarzschild-comparison}, the isoperimetric inequality, and Cauchy--Schwarz, we have
	\begin{multline*}
		\int_{M_m} |\nabla \phi^*|^2 \, dv_{g_m} = \int_0^1 \left( \int_{\Sigma_t^*} |\nabla \phi^*| \, d\sigma_t^* \right) \, dt = \int_0^1 \frac{|\Sigma_t^*|^2}{V'(t)} \, dt \\
		\leq \int_0^1 \frac{|\Sigma_t|^2}{V'(t)} \, dt \leq \int_0^1 \left( \int_{\Sigma_t} |\nabla \phi| \, d\sigma_t \right) \, dt = \int_{M_m} |\nabla \phi|^2 \, dv_{g_m}.
	\end{multline*}
	The right hand side is $\mathcal{C}_{g_m}(\Sigma, M_m^\Sigma)$ by \eqref{e-capacity-meaning}, while the left hand side is an upper bound for $\mathcal{C}_{g_m}(\Sigma^*, M_m^{\Sigma^*})$ by the variational characterization of capacity (see \cite[Section 6]{Bray01}).
\end{proof}

Combined with the  computation of capacity on rotationally symmetric spheres in Schwarzschild   and \eqref{e-braymiao-1}, \eqref{e-szego-schwarzschild} gives upper and lower bound of $\mathcal{C}_{g_m}(\Sigma, M_M^\Sigma)$ :
\begin{cor}\label{c-capacity-ul-bound}
	We have:
\bee\label{e-schwarzschild-capacity-upper-lower-bound}
	  \frac{r_{A}(\Sigma^*)}{2} \left( 1 + \sqrt{1 - \frac{2 \, \mhawk(\Sigma^*)}{r_{\operatorname{A}}(\Sigma^*)}} \right)  \leq \mathcal{C}_{g_m}(\Sigma, M_M^\Sigma)  \leq \frac{r_A (\Sigma)}{2} \left( 1 + \sqrt{1 - \frac{2 \, \mhawk(\Sigma)}{r_{\operatorname{A}}(\Sigma)}} \right),
\eee
where $r_A(\Sigma')$ is the area radius of a surface $\Sigma'$, i.e., $4\pi (r_A(\Sigma'))^2=|\Sigma'|$ which is the area of $\Sigma'$, and $\mhawk(\cdot)$ denotes the Hawking quasi-local mass \cite{Hawking68},
\[ \mhawk(\Sigma') := \sqrt{\frac{|\Sigma'|}{16\pi}} \left( 1 - \frac{1}{16\pi} \int_\Sigma H_{g_m}^2 \, d\sigma \right), \]
with $d\sigma$ the induced area element of $\Sigma' \subset (M_m, g_m)$. Moreover, if either inequality is an equality, then they are both equalities, and $\Sigma = \Sigma^*$.
\end{cor}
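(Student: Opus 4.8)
The plan is to obtain the two inequalities in \eqref{e-schwarzschild-capacity-upper-lower-bound} by chaining two results already at our disposal, and then to deduce the three equality assertions from their rigidity clauses. Throughout, $\Sigma$ is taken connected and to enclose the horizon $\Sh$ with non-negative signed volume, so that the rotationally symmetric sphere $\Sigma^*$ is a coordinate sphere $\{r=r_0\}$ with $r_0\ge m/2$. For the upper bound, first observe that $(M_m^\Sigma,g_m)=(\R^3\setminus\Omega,g_m)$ is complete, asymptotically flat of order $1$, scalar flat (so $R(g_m)\ge 0$), has connected boundary $\Sigma$, and is diffeomorphic to $\R^3$ minus a bounded domain with connected boundary; hence the Bray--Miao inequality \eqref{e-braymiao-1} applies and gives $\mathcal{C}_{g_m}(\Sigma,M_m^\Sigma)\le \sqrt{|\Sigma|/(16\pi)}\,\bigl(1+\sqrt{\tfrac1{16\pi}\int_\Sigma H_{g_m}^2\,d\sigma}\bigr)$. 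Using $r_A(\Sigma)=\sqrt{|\Sigma|/(4\pi)}$ together with the definition of the Hawking mass, one has $\sqrt{|\Sigma|/(16\pi)}=r_A(\Sigma)/2$ and $\tfrac1{16\pi}\int_\Sigma H_{g_m}^2\,d\sigma=1-2\mhawk(\Sigma)/r_A(\Sigma)$, so this right-hand side equals $\tfrac{r_A(\Sigma)}{2}\bigl(1+\sqrt{1-2\mhawk(\Sigma)/r_A(\Sigma)}\bigr)$, the asserted upper bound.

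For the lower bound, Theorem~\ref{t-szego-schwarzschild} gives $\mathcal{C}_{g_m}(\Sigma,M_m^\Sigma)\ge \mathcal{C}_{g_m}(\Sigma^*,M_m^{\Sigma^*})$. Since $\Sigma^*=\{r=r_0\}$ with $r_0\ge m/2$, the explicit computations \eqref{e-schwarzschild-sphere-capacity}--\eqref{e-schwarzschild-sphere-capacity-area-radius} give $\mathcal{C}_{g_m}(\Sigma^*,M_m^{\Sigma^*})=r_0+\tfrac m2=\tfrac{r_A(\Sigma^*)}{2}\bigl(1+\sqrt{1-2m/r_A(\Sigma^*)}\bigr)$, the ``$+$'' branch being forced by $r_0\ge m/2$. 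A direct computation from \eqref{e-schwarzschild-sphere-area-radius} and \eqref{e-schwarzschild-sphere-H} shows $\mhawk(\Sigma^*)=m$ for every coordinate sphere, so this equals $\tfrac{r_A(\Sigma^*)}{2}\bigl(1+\sqrt{1-2\mhawk(\Sigma^*)/r_A(\Sigma^*)}\bigr)$, which is the asserted lower bound.

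For the equality cases: if the left inequality in \eqref{e-schwarzschild-capacity-upper-lower-bound} is an equality, then $\mathcal{C}_{g_m}(\Sigma,M_m^\Sigma)=\mathcal{C}_{g_m}(\Sigma^*,M_m^{\Sigma^*})$, which forces $\Sigma=\Sigma^*$ by the rigidity clause of Theorem~\ref{t-szego-schwarzschild}. If the right inequality is an equality, then by the rigidity clause of \eqref{e-braymiao-1} the region $(M_m^\Sigma,g_m)$ is isometric to the exterior of a rotationally symmetric sphere outside the horizon in a Riemannian Schwarzschild manifold; comparing ADM masses, the model has mass $m$, so $(M_m^\Sigma,g_m)$ is isometric to $\{r\ge\rho\}\subset(M_m,g_m)$ for some $\rho\ge m/2$. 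Since the Schwarzschild manifold $(M_m,g_m)$ is complete, simply connected, and real-analytic, this local isometry extends to a global isometry of $(M_m,g_m)$, which is necessarily a composition of a rotation with possibly the inversion through the horizon; hence $\Sigma$ is itself a coordinate sphere, and therefore $\Sigma=\Sigma^*$ by the uniqueness defining $\Sigma^*$. Conversely, if $\Sigma=\Sigma^*$, then $\mhawk(\Sigma)=\mhawk(\Sigma^*)=m$ and, by \eqref{e-schwarzschild-sphere-capacity-area-radius}, $\mathcal{C}_{g_m}(\Sigma,M_m^\Sigma)=\tfrac{r_A(\Sigma)}{2}\bigl(1+\sqrt{1-2m/r_A(\Sigma)}\bigr)$, which is the common value of both bounds, so both inequalities are equalities simultaneously.

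The inequalities themselves are essentially repackagings of Theorem~\ref{t-szego-schwarzschild}, the Bray--Miao inequality \eqref{e-braymiao-1}, and the Schwarzschild computations of Subsection~\ref{s-schwarzschild-computations}, so the one delicate point will be the equality case of the upper bound: one must upgrade the abstract isometry produced by the Bray--Miao rigidity statement to the concrete conclusion that $\Sigma$ coincides with the coordinate sphere $\Sigma^*$, which is where the analytic rigidity of the Schwarzschild metric enters. The remaining ingredients — the identities $\sqrt{|\Sigma|/(16\pi)}=r_A(\Sigma)/2$, $\tfrac1{16\pi}\int_\Sigma H_{g_m}^2\,d\sigma=1-2\mhawk(\Sigma)/r_A(\Sigma)$, $\mhawk(\Sigma^*)=m$, and the selection of the ``$+$'' branch in \eqref{e-schwarzschild-sphere-capacity-area-radius} under the non-negative volume hypothesis — are routine bookkeeping.
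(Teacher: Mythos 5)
Your proposal is correct and takes essentially the same approach the paper sketches: the lower bound from Theorem \ref{t-szego-schwarzschild} and the Schwarzschild computations of Subsection \ref{s-schwarzschild-computations}, the upper bound from the Bray--Miao inequality \eqref{e-braymiao-1}, and the algebraic conversions via $\mhawk(\Sigma^*)=m$ and $\sqrt{|\Sigma|/16\pi}=r_A(\Sigma)/2$. Your extra care with the equality cases (in particular upgrading the abstract Bray--Miao rigidity to $\Sigma=\Sigma^*$ via the analytic rigidity of the Schwarzschild metric) and your explicit nonnegative-volume assumption (so that $\Sigma^*$ lies outside the horizon and the ``$+$'' branch of \eqref{e-schwarzschild-sphere-capacity-area-radius} applies) simply make precise what the paper leaves implicit.
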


\begin{cor} \label{c-schwarzschild-comparison}
	Let $\Sigma$ be a sphere of positive Gauss curvature bounding a domain containing with the horizon $\Sigma_H$ in a Schwarzschild manifold $(M_m, g_m)$ with mass $m > 0$, so that as a surface in $\R^3$, $\Sigma$ contains the origin. Suppose that the mean curvature with respect to the unit normal pointing into $M^\Sigma$:
	\begin{equation} \label{e-schwarzschild-comparison-relatively-trapped}
		H_{g_m} < 4 |\nabla \phi| \text{ on } \Sigma;
	\end{equation}
	here, $\phi$ is the boundary capacity potential of $\Sigma$ that corresponds to $\mathcal{C}_{g_m}(\Sigma, M^\Sigma)$ per \eqref{e-capacity-pde}.  If $\Sigma^*$ is the unique rotationally symmetric sphere in $(M_m, g_m)$ that bounds a domain with $ \Sh$ with the same (signed) volume as $\Sigma$, then
	\be \label{e-schwarzschild-comparison}
		\int_{\Sigma^*} (H_{g_0} + H_{g_m}) \, d\sigma^* \leq \int_{\Sigma} (H_{g_0} + H_{g_m}) \, d\sigma.
	\ee
	Moreover,  equality holds in \eqref{e-schwarzschild-comparison} if and only if $\Sigma = \Sigma^*$.
\end{cor}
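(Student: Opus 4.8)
The plan is to synthesize three ingredients already available in the paper: Theorem~\ref{t-LC-1} applied to the exterior region $(M_m^\Sigma, g_m)$, the Szeg\"o-type comparison of Theorem~\ref{t-szego-schwarzschild}, and the exact identity \eqref{e-schwarzschild-capacity-identity-new} which holds on every rotationally symmetric sphere in $(M_m, g_m)$. The idea is to match each of the two integral quantities appearing in \eqref{e-schwarzschild-comparison} to twice a boundary capacity: on the rotationally symmetric side the match is an exact equality, while on the $\Sigma$ side it is an inequality of the right sign.

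First I would record that $(M_m^\Sigma, g_m)$ is complete, asymptotically flat, scalar-flat, with compact orientable boundary $\Sigma$, and that the unit normal of $\Sigma$ pointing into $M_m^\Sigma$ is exactly the one pointing to the distinguished ($r\to\infty$) end, so that $H_{g_m}$ as in the statement agrees with the convention of Remark~\ref{r-mean-curvature}. Hence hypothesis \eqref{e-schwarzschild-comparison-relatively-trapped} is precisely \eqref{e-LC-1-boundary-assumption}, and Theorem~\ref{t-LC-1}(1) applies to give
\[ 2\,\mathcal{C}_{g_m}(\Sigma, M_m^\Sigma) \le \LL(\Sigma,\gamma) + \frac1{8\pi}\int_\Sigma H_{g_m}\,d\sigma. \]
Since $\Sigma$ is a sphere of positive Gauss curvature, \eqref{e-ll-h-gauss} (i.e. \cite{ShiTam02}) identifies $\LL(\Sigma,\gamma)=\frac1{8\pi}\int_\Sigma H_{g_0}\,d\sigma$, so
\[ 2\,\mathcal{C}_{g_m}(\Sigma, M_m^\Sigma) \le \frac1{8\pi}\int_\Sigma (H_{g_0}+H_{g_m})\,d\sigma. \]

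On the rotationally symmetric side, $\Sigma^*$ is a coordinate sphere of positive Gauss curvature, so $\LL(\Sigma^*,\gamma^*)=\frac1{8\pi}\int_{\Sigma^*}H_{g_0}\,d\sigma^*$ (with $\gamma^*$ the induced metric), and \eqref{e-schwarzschild-capacity-identity-new} from Subsection~\ref{s-schwarzschild-computations} upgrades this to the exact identity $\frac1{8\pi}\int_{\Sigma^*}(H_{g_0}+H_{g_m})\,d\sigma^* = 2\,\mathcal{C}_{g_m}(\Sigma^*, M_m^{\Sigma^*})$. Finally, $\Sigma$ and $\Sigma^*$ bound domains with $\Sigma_H$ of equal signed volume, so Theorem~\ref{t-szego-schwarzschild} gives $\mathcal{C}_{g_m}(\Sigma^*, M_m^{\Sigma^*}) \le \mathcal{C}_{g_m}(\Sigma, M_m^\Sigma)$. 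Chaining the three relations,
\[ \frac1{8\pi}\int_{\Sigma^*}(H_{g_0}+H_{g_m})\,d\sigma^* = 2\,\mathcal{C}_{g_m}(\Sigma^*, M_m^{\Sigma^*}) \le 2\,\mathcal{C}_{g_m}(\Sigma, M_m^\Sigma) \le \frac1{8\pi}\int_\Sigma (H_{g_0}+H_{g_m})\,d\sigma, \]
which is \eqref{e-schwarzschild-comparison}. For rigidity, $\Sigma=\Sigma^*$ trivially gives equality; conversely, equality in \eqref{e-schwarzschild-comparison} forces equality throughout the chain, in particular $\mathcal{C}_{g_m}(\Sigma, M_m^\Sigma)=\mathcal{C}_{g_m}(\Sigma^*, M_m^{\Sigma^*})$, and the rigidity clause of Theorem~\ref{t-szego-schwarzschild} then yields $\Sigma=\Sigma^*$.

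I do not expect a deep obstacle: the argument is essentially an assembly of earlier results. The points requiring care are bookkeeping ones, namely verifying the mean-curvature sign convention so that \eqref{e-schwarzschild-comparison-relatively-trapped} really coincides with \eqref{e-LC-1-boundary-assumption}, and confirming that \eqref{e-schwarzschild-capacity-identity-new}, derived for coordinate spheres $\{r=r_0\}$ at every $r_0\in(0,\infty)$, applies to $\Sigma^*$ regardless of its position relative to the horizon. One might alternatively try to extract rigidity from the equality case of Theorem~\ref{t-LC-1}, but that route requires $H_{g_m}$ to be constant on $\Sigma$, which is not hypothesized, so routing rigidity through Theorem~\ref{t-szego-schwarzschild} is the clean choice.
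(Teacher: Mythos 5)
Your proposal is correct and takes essentially the same route as the paper: chain Theorem~\ref{t-LC-1} (via \eqref{e-ll-h-gauss}), Theorem~\ref{t-szego-schwarzschild}, and the identity \eqref{e-schwarzschild-capacity-identity-new}, with rigidity routed through the equality case of Theorem~\ref{t-szego-schwarzschild}. Your version merely spells out the bookkeeping (mean-curvature sign convention, applicability of \eqref{e-schwarzschild-capacity-identity-new} for all coordinate spheres) that the paper leaves implicit.
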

\begin{proof}
	From Theorems \ref{t-LC-1}, \ref{t-szego-schwarzschild}, and \eqref{e-schwarzschild-capacity-identity-new}:
	\begin{align*}
		\frac{1}{8\pi} \int_{\Sigma} (H_{g_0} + H_{g_m}) \, d\sigma
			& \geq 2 \, \mathcal{C}_{g_m}(\Sigma, M_m^\Sigma) \\
			& \geq 2 \, \mathcal{C}_{g_m}(\Sigma^*, M_m^{\Sigma^*}) = \frac{1}{8\pi} \int_{\Sigma^*} (H_{g_0} + H_{g_m}) \, d\sigma.
	\end{align*}
	The first result follows. Rigidity follows from Theorem \ref{t-szego-schwarzschild}.
\end{proof}

 Another  corollary of Theorems \ref{t-LC-2} and \ref{t-szego-schwarzschild} is the following:

\begin{cor} \label{c-schwarzschild-mby-comparison}
	Let $\Sigma$ be a sphere of positive Gauss curvature bounding a domain  with the horizon $\Sigma_H$ in a Schwarzschild manifold $(M_m, g_m)$ with mass $m > 0$, and which is weakly outer trapped (its mean curvature vector points to $r=\infty$) and contains the origin in $\R^3$. Let $\phi$ be the boundary capacity potential of $\mathcal{C}_{g_m}(\Sigma, M_m^\Sigma)$ given by \eqref{e-capacity-pde}. Set
	\[ u := \tfrac12 (2-\phi), \; \Sigma_c := \{ u = c \}, \; c \in (\tfrac12, 1). \]
	Suppose that $c \in (\tfrac12, 1)$ is a regular value of $u$ and that the equipotential surface $\Sigma_c$ has positive Gauss curvature and its mean curvature with respect to the unit normal that points into $M_\Sigma$ satisfies
	\begin{equation} \label{e-schwarzschild-mby-comparison-relatively-untrapped}
		H_{g_m} > - 4 |\nabla \log u| \text{ on } \Sigma_c.
	\end{equation}
	If $\Sigma_c$ bounds the same (signed) volume with $\Sigma_H$ in $M_m$ the rotationally symmetric sphere $\Sigma_c^*$, then
	\begin{equation} \label{e-schwarzschild-mby-comparison}
		\mby(\Sigma_c) \geq (c^{-1} - 1) \left( \frac{\mby(\Sigma_c^*) }{m} - 1 \right)^{-1}  \mby(\Sigma_c^*),
	\end{equation}
	and equality holds if and only if $\Sigma$ is the horizon and $\Sigma_c = \Sigma_c^*$. Here $\mby(\wt\Sigma) := \frac{1}{8\pi} \int_{\wt\Sigma} (H_{g_0} - H_{g_m}) \, d\wt\sigma$, for $\wt\Sigma = \Sigma_c$, $\Sigma_c^*$, respectively.
\end{cor}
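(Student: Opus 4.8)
The plan is to play Theorem~\ref{t-LC-2} and Theorem~\ref{t-szego-schwarzschild} off against each other and then rephrase the outcome in terms of $\mby$ using the explicit Schwarzschild formulas of Subsection~\ref{s-schwarzschild-computations}. First I would check that Theorem~\ref{t-LC-2} applies to $(M_m^\Sigma, g_m)$ with boundary $\Sigma$: the weak outer trapping hypothesis says precisely that $H_{g_m}\le 0$ on $\Sigma$ with respect to the normal pointing to $r=\infty$, and \eqref{e-schwarzschild-mby-comparison-relatively-untrapped} is condition \eqref{e-LC-2-boundary-assumption} for $\Sigma_c$. Since $\Sigma_c$ is a sphere of positive Gauss curvature, \eqref{e-ll-h-gauss} gives $\LL(\Sigma_c,\gamma_c)=\tfrac1{8\pi}\int_{\Sigma_c}H_{g_0}\,d\sigma_c$, so the right-hand side of \eqref{e-LC-2} equals $\mby(\Sigma_c)$ (cf.\ Remark~\ref{r-t-LC-1-mby}). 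Thus Theorem~\ref{t-LC-2} yields $c^{-1}\,\mathcal{C}_{g_m}(\Sigma, M_m^\Sigma)\le \mby(\Sigma_c)$.

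Next I would relate the capacity of $\Sigma$ to that of $\Sigma_c$. Because $u=c$ means $\phi=2(1-c)$, the function $[2(1-c)]^{-1}\phi$ is harmonic on $M_m^{\Sigma_c}$, equals $1$ on $\Sigma_c$, and decays to $0$ at infinity, hence is the boundary capacity potential of $\Sigma_c\subset M_m^{\Sigma_c}$; applying the divergence theorem to the harmonic $\phi$ on the compact region bounded by $\Sigma$ and $\Sigma_c$ gives $\int_{\Sigma_c}\tfrac{\p}{\p\eta_c}\phi\,d\sigma_c=\int_\Sigma\tfrac{\p}{\p\eta}\phi\,d\sigma$, so by \eqref{e-capacity-meaning}
\[ \mathcal{C}_{g_m}(\Sigma_c, M_m^{\Sigma_c})=[2(1-c)]^{-1}\,\mathcal{C}_{g_m}(\Sigma, M_m^\Sigma). \]
Then Theorem~\ref{t-szego-schwarzschild}, applied to the surface $\Sigma_c$ (which bounds a domain with $\Sigma_H$ and whose rotationally symmetric volume-comparison is exactly $\Sigma_c^*$), gives $\mathcal{C}_{g_m}(\Sigma_c, M_m^{\Sigma_c})\ge \mathcal{C}_{g_m}(\Sigma_c^*, M_m^{\Sigma_c^*})$. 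Chaining the three facts and using $c<1$,
\[ \mby(\Sigma_c)\ge c^{-1}\,\mathcal{C}_{g_m}(\Sigma, M_m^\Sigma)=2(c^{-1}-1)\,\mathcal{C}_{g_m}(\Sigma_c, M_m^{\Sigma_c})\ge 2(c^{-1}-1)\,\mathcal{C}_{g_m}(\Sigma_c^*, M_m^{\Sigma_c^*}). \]

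It then remains to express the capacity of the rotationally symmetric sphere through its Brown--York mass. For the coordinate sphere $\{r=r_0\}$ the formulas \eqref{e-schwarzschild-sphere-H0}, \eqref{e-schwarzschild-sphere-H}, \eqref{e-schwarzschild-sphere-capacity} give $\mathcal{C}_{g_m}=r_0+\tfrac m2$ and $\mby=m\big(1+\tfrac{m}{2r_0}\big)$; eliminating $r_0$ yields the identity $2\,\mathcal{C}_{g_m}(\Sigma_c^*, M_m^{\Sigma_c^*})=\big(\mby(\Sigma_c^*)/m-1\big)^{-1}\mby(\Sigma_c^*)$, and substituting this into the last display produces exactly \eqref{e-schwarzschild-mby-comparison}.

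For the rigidity statement, equality in \eqref{e-schwarzschild-mby-comparison} forces equality in Theorem~\ref{t-LC-2} and in the application of Theorem~\ref{t-szego-schwarzschild} to $\Sigma_c$; the latter gives $\Sigma_c=\Sigma_c^*$, while the rigidity statement of Theorem~\ref{t-LC-2} gives that $(M_m^\Sigma, g_m)$ is the exterior region of a Schwarzschild manifold with horizon $\Sigma$, forcing $\Sigma$ to be a minimal coordinate sphere, i.e.\ $\Sigma=\Sigma_H$. Conversely, when $\Sigma=\Sigma_H$ and $\Sigma_c=\Sigma_c^*$ everything is rotationally symmetric and $\phi$ is the capacity potential of the horizon, so the explicit computations of Subsection~\ref{s-schwarzschild-computations} show that all the inequalities above are equalities. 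Since the analytic content is carried entirely by Theorems~\ref{t-LC-2} and \ref{t-szego-schwarzschild}, the points requiring care are the bookkeeping of normal orientations in the rescaling identity, the elementary but error-prone Schwarzschild arithmetic of the third step, and, in the rigidity case, the observation that a minimal coordinate sphere in $(M_m,g_m)$ must be the horizon.
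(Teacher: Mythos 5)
Your proof is correct and follows exactly the route the paper intends (the paper's own proof is a one-line remark pointing to Corollary \ref{c-schwarzschild-comparison} and equation \eqref{e-schwarzschild-sphere-mby}). You correctly identified the one extra ingredient that the Corollary~\ref{c-schwarzschild-comparison} template doesn't supply: relating $\mathcal{C}_{g_m}(\Sigma, M_m^\Sigma)$ to $\mathcal{C}_{g_m}(\Sigma_c, M_m^{\Sigma_c})$ via the rescaled harmonic potential $[2(1-c)]^{-1}\phi$ and conservation of flux, which replaces the direct application of Theorem~\ref{t-szego-schwarzschild} to $\Sigma$ by an application to $\Sigma_c$. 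Your Schwarzschild arithmetic $2\,\mathcal{C}_{g_m}(\Sigma_c^*) = \left(\mby(\Sigma_c^*)/m - 1\right)^{-1}\mby(\Sigma_c^*)$ checks out (it is the elimination of $r_0$ between $\mathcal{C}=r_0+m/2$ and $\mby=m(1+m/(2r_0))$), and the rigidity bookkeeping is right: equality forces $\Sigma_c=\Sigma_c^*$ by Theorem~\ref{t-szego-schwarzschild}, and once $\Sigma_c$ is a rotationally symmetric sphere outside the horizon it automatically has positive mean curvature, so the rigidity clause of Theorem~\ref{t-LC-2} is applicable and forces $\Sigma=\Sigma_H$.
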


\begin{proof}
	This follows like Corollary \ref{c-schwarzschild-comparison} above, except we evaluate the Brown--York mass of rotationally symmetric spheres using  \eqref{e-schwarzschild-sphere-mby}.
\end{proof}

\subsection{Comparing the ADM mass to the  $\LL$-invariant} \label{s-examples-inequality-1}

We have been discussing three quantities on an asymptotically flat manifold $(M^3,g)$ with nonnegative scalar curvature with compact boundary $\Sigma$ which is a horizon. Namely, the ADM mass $\mathfrak{m}_{\text{ADM}}$, the capacity $\mathcal{C}(M,\Sigma)$ and the quantity $\LL(\Sigma,\gamma)$ where $\gamma$ is the induced metric. Bray's capacity inequality \eqref{e-m-cap} and Theorem \ref{t-LC-1} imply that
\begin{equation}\label{e-ex-1}
  \begin{array}{r}
   \mathfrak{m}_{\text{ADM}}\ge \CgSM; \\
  \tfrac12 \LL(\Sigma,\gamma)\ge \CgSM.
  \end{array}
\end{equation}
In this section we'll point out how there is no a priori relationship between the ADM mass of a complete, asymptotically flat manifold and the $\LL$-invariant of its boundary (recall Definition \ref{d-ff-ll}).

Fist, let $m > m' > 0$ and consider the Schwarzschild metrics
\[ g_m := \left( 1 + \frac{m}{2r} \right)^4 g_0, \; g_{m'} := \left( 1 + \frac{m'}{2r} \right)^4 g_0, \]
taken to be defined on $M_m := \R^3 \setminus B_{m/2}(0)$, $M_{m'} := \R^3 \setminus \{0\}$, respectively. We can glue these two manifolds together by identifying the horizon $\Sigma_H = \{ |x| = m/2 \} \subset (M_m, g_m)$  with the coordinate sphere $\Sigma' = \{ |x| = r' \} \subset (M_{m'}, g_{m'})$, where the latter is isometric to $\Sigma_H$. This is doable because there is a unique such $r' \in (\tfrac{m'}{2}, \infty)$, given by
\[ \left( 1 + \frac{m'}{2r'} \right)^4 4\pi (r')^2 = 4\pi (2m)^2. \]
Define
\[ (\wt M, \wt g) := (M_m, g_m) \sqcup (\{ m'/2 \leq |x| \leq r' \} \subset M_{m'}, g_{m'}) / \sim \]
where the symbol $/ \sim$ on the right denotes the boundary identification $\Sigma_H \cong \Sigma'$. This is a manifold with a Lipschitz metric which is smooth and scalar flat away from $\Sigma_H$, and has minimal boundary $\Sigma_H'$. By \eqref{e-schwarzschild-sphere-H0}, we see that
\begin{equation} \label{e-right-H-jump-ADM-mby}
	\madm(\wt M, \wt g) = m > m' = \frac12 \cdot \frac{1}{8\pi} \int_{\Sigma_H'} H_{g_0}   \, d\sigma= \tfrac12 \LL(\Sigma_H',\gamma').
\end{equation}
Here, $H_{g_0}$ denotes the mean curvature of $\Sigma_H'$ in $\R^3$. Even though $\wt g$ is not smooth on $\Sigma'$, the mean curvature jump on it is such that one may apply the smoothing method of \cite[Proposition 3.1]{Miao02} to the (larger) rotationally symmetric manifold
\[ (M_m, g_m) \sqcup (\{ 0 < |x| \leq r' \} \subset (M_{m'}, g_{m'})) / \sim, \]
where $\sim$ still denotes the identification $\Sigma_H \cong \Sigma'$. One obtains rotationally symmetric $(\wt M_i, \wt g_i)$ that converge smoothly away from $\Sigma_H$ to the original nonsmooth manifold, and, by \cite[Lemma 4.2]{Miao02},
\[ \lim_i \madm(\wt M_i, \wt g_i) = \madm(\wt M, \wt g); \]
on the left hand side, $\madm$ is computed on the end coming from $M_m$. By a simple mean convex barrier argument, $(\wt M_i, \wt g_i)$ contains a horizon $(\Sigma_{H,i}', \gamma_i')$ whose metric converges smoothly to that of $\Sigma_H'$. Thus
\[ \lim_i \int_{\Sigma_{H,i}'} H_{0,i} \, d\sigma_{\gamma_i} = \int_{\Sigma_H'} H_{g_0} \, d\sigma_{\gamma'}. \]
Here, $H_{0,i}$ denotes the mean curvature of the isometric embedding of either $\Sigma_{H,i}'$ or $\Sigma_H'$ into $\R^3$. The last two limits allow us to smooth the manifold giving the counterexample in \eqref{e-right-H-jump-ADM-mby}.

\begin{rem}
	One can arrange for  \eqref{e-right-H-jump-ADM-mby} to hold with $\gg$ in the inequality by suitably picking $m \to \infty$, $m' \to 0$.
\end{rem}

Conversely, \cite{MS15} shows the existence of asymptotically flat extensions of horizons $(\Sigma, \gamma)$ with positive Gauss curvature, with ADM mass arbitrarily close to $\sqrt{|\Sigma|_\gamma/16\pi}$. The classical Minkowski inequality on $\R^3$ and \cite[Theorem 1.5]{M-M} imply, for nonround $\Sigma$, that
\begin{equation} \label{e-mby-minkowski-inequality}
	\sqrt{\frac{|\Sigma|_\gamma}{16\pi}} < \frac{1}{16\pi} \int_\Sigma H_{g_0} \, d\sigma= \tfrac12 \LL(\Sigma, \gamma),
\end{equation}
where $H_{g_0}$ is the mean curvature of $\Sigma$ when embedded in $(\R^3, g_0)$.
	
\begin{rem}
	One can arrange for \eqref{e-mby-minkowski-inequality} to hold with $\ll$ in the inequality by suitably picking $(\Sigma, \gamma)$ to be a long, thin tube.
\end{rem}

\appendix

\section{Asymptotically flat manifolds and capacity} \label{s-af-implications}

We recall the definition of an asymptotically flat manifold.

\begin{df}\label{d-AF}
	A Riemannian $3$-manifold $(M, g)$ is called asymptotically flat (of order $\tau$) if there is a compact set $K $ such that $M \setminus K$ is diffeomorphic to $\R^3$ minus a ball and, with respect to the standard coordinates on $\R^3$, the metric $g$ satisfies
	\be \label{e-af-decay-conds}
		g_{ij} = \delta_{ij} + O ( | x |^{-\tau} ) , \ \p g_{ij}  =  O ( | x |^{ - \tau - 1}) , \ \p \p g_{ij}  =  O ( | x |^{ - \tau - 2 } )
	\ee
	for some $\tau \in (\tfrac12, 1]$, and the scalar curvature $ R(g)$ satisfies
	\be \label{e-af-decay-conds-scalar-curv}
		R(g) = O (| x |^{-q})
	\ee
	for some $q > 3$. Here $\partial$ denotes  partial differentiation on $\R^3$.
\end{df}

In asymptotically flat manifolds of order $\tau$, we have precise control of the boundary capacity potential from Definition \ref{d-capacity},  \eqref{e-capacity-meaning}. Namely:

\begin{lemma} \label{l-phi}
	If $(M^3, g)$ is asymptotically flat of order $\frac12<\tau<1$, then in Euclidean coordinates $x$ near infinity,
	\begin{equation} \label{e-capacity-expansion}
		\phi(x)=\frac{\CgSM}{|x|}+O_2(|x|^{-1-\tau}),
	\end{equation}
	for a constant $\CgSM > 0$.
\end{lemma}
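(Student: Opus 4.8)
The plan is to combine a barrier argument for crude decay with a Newtonian-potential analysis on the asymptotically flat end; this is, in effect, the classical theory of harmonic functions on such manifolds, and one could alternatively invoke weighted elliptic estimates (à la Bartnik), but I will sketch the self-contained route. Throughout, normalize so that the relevant compact set $K$ (hence $\Sigma$) lies in $\{|x|<R_0\}$ and $g$ satisfies \eqref{e-af-decay-conds} on $\{|x|>R_0\}$; existence of $\phi$ with $0<\phi<1$ is built into Definition~\ref{d-capacity} and follows by exhaustion and the maximum principle. Write $r=|x|$ and fix $a:=1+\tfrac\tau2\in(1,1+\tau)$.

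\textbf{Step 1: $\phi\asymp r^{-1}$ on the end.} Since $\Delta_{g_0}r^{-1}=0$, $\Delta_{g_0}r^{-a}=a(a-1)r^{-a-2}$, and the difference operator $\Delta_g-\Delta_{g_0}$ has coefficients that are $O(r^{-\tau})$ on its second-order part and $O(r^{-\tau-1})$ on its first-order part, one computes $\Delta_g(r^{-1})=O(r^{-3-\tau})$ and
\[ \Delta_g\!\left(r^{-1}-r^{-a}\right)\le C\,r^{-3-\tau}-\tfrac12 a(a-1)\,r^{-a-2}\le 0 \quad\text{for } r\ge R_1, \]
with $R_1$ depending only on $g,\tau$ (using $-a-2>-3-\tau$). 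Thus $w:=M\big(r^{-1}-r^{-a}\big)$ is, for $M$ large, a positive $g$-supersolution on $\{r\ge R_1\}$ with $w\ge 1\ge\phi$ on $\{r=R_1\}$ and $w\to 0$, so the maximum principle gives $\phi\le w\le M r^{-1}$ there; together with boundedness of $\phi$ on $\{R_0\le r\le R_1\}$ this yields $\phi=O(r^{-1})$ on the end. Symmetrically $v:=\mu\big(r^{-1}+r^{-a}\big)$ is a positive $g$-subsolution for small $\mu$ with $v\le\phi$ on $\{r=R_1\}$ (as $\phi>0$ is bounded below on that compact sphere), so $\phi\ge\mu r^{-1}$; hence $\phi\asymp r^{-1}$.

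\textbf{Step 2: sharp expansion.} Interior Schauder estimates on balls $B_{r/4}(x)$ (rescaled to unit size, where the coefficients of $\Delta_g$ are uniformly controlled by \eqref{e-af-decay-conds}) turn $\phi=O(r^{-1})$, $\Delta_g\phi=0$ into $|\p\phi|=O(r^{-2})$, $|\p^2\phi|=O(r^{-3})$; hence $f:=\Delta_{g_0}\phi=-\big[(g^{ij}-\delta^{ij})\p_i\p_j\phi+(\cdots)\p_j\phi\big]=O(r^{-3-\tau})$ on the end. Extend $f$ by zero inside $\{r\le R_0\}$ and let $\phi_p(x):=-\tfrac1{4\pi}\int_{\R^3}|x-z|^{-1}f(z)\,dz$ be its Newtonian potential, so $\phi_h:=\phi-\phi_p$ is $g_0$-harmonic on $\{r>R_0\}$. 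Splitting the integral into $\{|z|<\tfrac12|x|\}$, $\{|z-x|<\tfrac12|x|\}$ and the remainder, and Taylor-expanding $|x-z|^{-1}=|x|^{-1}+x\!\cdot\!z\,|x|^{-3}+O(|z|^2|x|^{-3})$ on the first piece, one obtains $\phi_p(x)=c_1|x|^{-1}+O(|x|^{-1-\tau})$ with $c_1=-\tfrac1{4\pi}\int_{\R^3}f$; the only delicate term is the dipole contribution $\tfrac{x_k}{|x|^3}\int_{|z|<|x|/2}z_k f$, which need not be absolutely convergent when $\tau<1$ but is bounded by $|x|^{-2}\int_{R_0}^{|x|/2}\rho^{-\tau}\,d\rho=O(|x|^{-1-\tau})$ — this is the only place $\tau<1$ is used (at $\tau=1$ a logarithm appears). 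Since $\phi_h$ is $g_0$-harmonic and $O(r^{-1})$, its exterior spherical-harmonic expansion gives $\phi_h=c_2|x|^{-1}+O_\infty(|x|^{-2})$. Hence $\phi=\mathcal{C}|x|^{-1}+O_0(|x|^{-1-\tau})$ with $\mathcal{C}:=c_1+c_2$ (using $\tau<1$). Finally, $\psi:=\phi-\mathcal{C}|x|^{-1}$ satisfies $\Delta_g\psi=-\mathcal{C}\,\Delta_g(|x|^{-1})=O(|x|^{-3-\tau})$ and $|\psi|=O(|x|^{-1-\tau})$, so the rescaled interior Schauder estimates upgrade this to $|\p\psi|=O(|x|^{-2-\tau})$, $|\p^2\psi|=O(|x|^{-3-\tau})$, i.e. $\phi=\mathcal{C}|x|^{-1}+O_2(|x|^{-1-\tau})$, which is \eqref{e-capacity-expansion}.

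\textbf{Step 3: identifying $\mathcal{C}$ and positivity.} Because $\Delta_g\phi=0$, the divergence theorem on $\{R_0<|x|<R\}$ shows the flux $\int_{S_R}\tfrac{\p}{\p\nu}\phi\,d\sigma_R$ (outward $g$-normal, $g$-area element) is independent of $R$ and equals $-\int_\Sigma\tfrac{\p}{\p\eta}\phi\,d\sigma=-4\pi\,\CgSM$ by \eqref{e-capacity-meaning}. On the other hand, inserting the expansion from Step 2 and using that the $g$-normal and $g$-area element on $S_R$ differ from the Euclidean ones by $O(R^{-\tau})$, a direct computation gives $\int_{S_R}\tfrac{\p}{\p\nu}\phi\,d\sigma_R\to -4\pi\,\mathcal{C}$ as $R\to\infty$. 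Hence $\mathcal{C}=\CgSM$, which is strictly positive since $\phi$ is nonconstant. This proves the lemma.

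I expect Step 2 to be the main obstacle: the sharp Newtonian-potential expansion requires careful bookkeeping of the marginally divergent dipole term (exactly where the hypothesis $\tau<1$ enters), and the passage from a $C^0$ to the asserted $O_2$ estimate needs the rescaled Schauder bootstrap. Steps 1 and 3 are standard barrier/flux arguments.
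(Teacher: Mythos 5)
Your proof is correct and follows essentially the same route as the paper's: a barrier argument to get $\phi = O(r^{-1})$, rescaled Schauder estimates to upgrade to derivative decay and $\Delta_{g_0}\phi = O(r^{-3-\tau})$, a three-region splitting of the Newtonian potential (with the marginally divergent dipole term controlled precisely because $\tau < 1$), and a final Schauder bootstrap for the $O_2$ statement. The only cosmetic differences are that the paper extends $\phi$ itself to all of $\mathbb{R}^3$ and concludes $w \equiv \phi$ by Liouville, whereas you extend $f$ by zero and dispatch the $g_0$-harmonic residual $\phi_h$ via the exterior spherical-harmonic expansion, and you spell out the flux computation identifying $\mathcal{C} = \mathcal{C}_g(\Sigma,M)$ (which the paper leaves implicit).
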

\begin{proof}
	We'll write $\Delta_0$ for the Euclidean Laplacian and use the notation $u_i = \partial_i u$, etc.
Here and below, $C$ will denote positive constant which is independent of $x$ and $r$.
Let $\psi=ar^{-1}- r^{-1-\e}$, with $a,   \e>0$. Then
		\be\label{e-laplacian-expansion}
			\Delta_g \psi = \Delta_0 \psi +\sigma_{ij}\psi_{ij} + \kappa_i \psi_i = - (1+\e)(3+\e)r^{-3-\e}+O(r^{-3-\tau}).
		\ee
with $\sigma_{ij} = O_2(r^{-\tau})$, $\kappa_i = O_1(r^{-1-\tau})$.
		Choosing $\e<\tau$, there is $R>0$ independent of $a$  so that $\Delta_g \psi \leq 0$ on $\R^3 \setminus B_R(0)$. Choose $a \gg 1$ so that $\psi > \phi$ on the complement of $\R^3 \setminus B_R(0)$ in $M$. We conclude, from the maximum principle, that $\phi \le \psi \leq a r^{-1}$ on $M$. Similarly, $\phi\ge -ar^{-1}$, giving the required bound on $|\phi|$. By the  interior Schauder estimates, \cite[Theorem 6.2]{GilbargTrudinger01} we have
\bee
		 |\phi|\le C|x|^{-1}, \  |\p\phi|\le C|x|^{-2},\ |\p\p \phi|\le C|x|^{-3}.
	\eee
	 	Extending $\phi$ to be a smooth function on in a compact set, we may assume that $\phi$ is defined in $\R^3$ so that $f=\Delta_0\phi=O(r^{-3-\tau}) $  by \eqref{e-laplacian-expansion} because $\Delta_g\phi=0$ near infinity. Let
		\[ w(x) := -\frac1{4\pi} \int_{\R^3} \frac1{|x-y|} f(y) \, dy. \]
		By the decay rate of $f$, this is well defined and $\Delta_0 w \equiv 0$.
		\begin{align} \label{eq-phi-expansion-1}
			\int_{\R^3} \frac{f(y)}{|x-y|} \, dy
				& = \underbrace{\int_{B_{r/2}(x)} \frac{f(y)}{|x-y|} \, dy}_{=: \, \mathbf{I}} + \underbrace{\int_{B_{r/2}(0)} \frac{f(y)}{|x-y|} \, dy}_{=: \, \mathbf{II}} \\
				& \qquad + \underbrace{\int_{\R^3 \setminus (B_{r/2}(x) \cup B_{r/2}(0))} \frac{f(y)}{|x-y|} \, dy}_{=: \, \mathbf{III}}. \nonumber
		\end{align}
		Since $|y| \geq r/2$ for $y \in B_{r/2}(x)$,
		\be \label{eq-phi-expansion-2}
			|\, \mathbf{I} \,| \le C r^{-3-\tau}\int_{B_{r/2}(x)} \frac{dy}{|x-y|} \le C r^{-1-\tau}.
		\ee
		Since outside $B_{r/2}(x)$, $|x-y| \ge \frac r2$. Hence
		\be \label{eq-phi-expansion-3}
			|\, \mathbf{III} \,| \le C r^{-1} \int_{\R^3 \setminus B_{r/2}(0)} |y|^{-3-\tau} \, dy \le C r^{-1-\tau}.
		\ee
		To estimate $\mathbf{II}$, let
		\[ \wt a := - \int_{\R^3} f(y) \, dy. \]
		Note that $\wt a$ is a finite number by the decay rate of $f$. For $r=|x|>1$,
		\be\label{eq-phi-expansion-4}
			\mathbf{II} + \frac{\wt a}{r} = \underbrace{-\frac1r\int_{\R^3 \setminus B_{r/2}(0)} f(y) \, dy}_{=: \, \mathbf{IV}} + \underbrace{\int_{B_{r/2}(0)} \lf( \frac1{|x-y|}-\frac1{|x|} \ri) f(y) \, dy}_{=: \, \mathbf{V}}.
		\ee
		By direct integration it is easy to see that
		\be \label{eq-phi-expansion-5}
			|\, \mathbf{IV} \,| \le C r^{-1-\tau}.
		\ee
		Since $|x-y|\ge \frac r2$ for $y\in B_{r/2}(0)$, we have
		\be \label{eq-phi-expansion-6}
			|\, \mathbf{V} \,| = \lf|\int_{B_{r/2}(0)}\lf(\frac{ 2x\cdot y-|y|^2}{|x|\,|x-y|\,(|x|+|x-y|)}\ri)f(y) \, dy \ri|  \le C r^{-1-\tau}.
		\ee
		By  \eqref{eq-phi-expansion-1}-\eqref{eq-phi-expansion-6}, we have
$w=ar^{-1}+O(r^{-1-\tau})$. Thus, by the maximum principle, $w \equiv \phi$.

	The higher order estimates, again, follow from \cite[Theorem 6.2]{GilbargTrudinger01}. This completes the proof of the lemma.
\end{proof}

It will often prove to be useful in this paper to invert coordinates on our asymptotic end. The inversion map is defined to be:
\be \label{e-inversion-map}
	\Phi : \R^3 \setminus \{0\} \to \R^3 \setminus \{0\}, \; y=\Phi(x) = |x|^{-2} x.
\ee
Using the inversion $y=x/|x|^2$, the point $\infty$   will correspond to the origin $y=0$. Suppose $g$ is a metric outside a compact set of the $x$-space in $\R^3$ and $g_{ij}=g(\p_{x^i},\p_{x^j})$ such that $g_{ij}=\delta_{ij}+\sigma_{ij}$. Using the above inversion, let  $h_{ij}=g(\p_{y^i},\p_{y^j})$ and $g_{ij}=g(\p_{x^i},\p_{x^j})$. Then
\be \label{e-inversion-map-1}
h_{ij} = |y|^{-4}\lf\{\delta_{ij}+ \sigma_{ij}+|y|^{-2}\lf[4y^iy^jy^ky^l\sigma_{kl}-2|y|^2
\lf(y^iy^k\sigma_{jk}+y^jy^k\sigma_{ik}\ri)\ri]\ri\}.
\ee
In particular, if $g_{ij}=\delta_{ij}$, then $h_{ij}=|y|^{-4}\delta_{ij}$.

\section{$\Fcirc(\Sigma, \gamma)$, $\Lcirc(\Sigma, \gamma)$ for disconnected $\Sigma$}
\label{s-f-l-disconnected}

We revisit $\F$, $\L$ from \cite[Definition 1.2]{M-M} where $\Sigma$ had been assumed connected for simplicity. We'll generalize results about $\F$, $\L$ from \cite{M-M} to match the revised Definition \ref{d-f-l} of the current paper.

The following two lemmas generalize verbatim---the connectedness of $\Sigma$ didn't play a role.

\begin{lemma}[Filling, cf. {\cite[Lemma 2.2]{M-M}}] \label{l-filling}
	Suppose $(\Omega, g) \in \Fcirc(\Sigma, \gamma)$ is such that:
	\begin{enumerate}
		\item $\Sigma_H = \p \Omega \setminus \Sigma_O$ is nonempty,
		\item $R(g) > 0$ on $\Sigma_H \subset \p \Omega$, and
		\item every component of $\Sigma_H$ is a stable minimal 2-sphere.
	\end{enumerate}
	Then for every $\eta > 0$, there exists $(D, h) \in \Fcirc(\Sigma, \gamma)$ with $H_h > H_g - \eta$ on its boundary $S_O$, which corresponds to $\Sigma_O$.
\end{lemma}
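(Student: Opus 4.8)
The plan is to prove this exactly as \cite[Lemma 2.2]{M-M}, the only new feature being that $\Sigma$ (hence $\Sigma_H$) may now be disconnected. The idea is to \emph{cap off} $\Sigma_H$ component by component with compact three-manifolds of nonnegative scalar curvature and minimal boundary, glue these caps onto $(\Omega,g)$, and then smooth the resulting metric at the glued-in minimal spheres; since that smoothing and the conformal flattening that follows it are supported in arbitrarily small neighborhoods of $\Sigma_H$ and never touch $\Sigma_O$, the finitely many components of $\Sigma_H$ can be handled independently, and the loss in the boundary mean curvature along $\Sigma_O$ can be made less than $\eta$.

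Concretely, I would first record that each component $\Sigma_{H,j}\subset\Sigma_H$ is a stable minimal $2$-sphere with $R(g)>0$ along it, so feeding $H\equiv0$ and the Gauss equation into the stability inequality gives $\lambda_1\big(-\Delta_{\Sigma_{H,j}}+K_{\Sigma_{H,j}}\big)>0$ for the induced metric, where $K$ is its Gauss curvature. By the Mantoulidis--Schoen construction \cite{MS15} (used in this exact form in \cite{M-M}), any metric on $S^2$ satisfying this spectral condition is the boundary metric of a compact, connected $(D_j,h_j)$ with $R(h_j)\ge0$ and \emph{minimal} (indeed totally geodesic) boundary; I would apply it with the metric $g|_{\Sigma_{H,j}}$.

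Next I would glue: set $\widehat D:=\Omega\cup_{\Sigma_H}\big(\bigsqcup_j D_j\big)$, with $\widehat g:=g$ on $\Omega$ and $\widehat g:=h_j$ on $D_j$, identifying $\partial D_j$ with $\Sigma_{H,j}$ via the chosen isometry. Then $\widehat D$ is a smooth compact connected manifold with $\partial\widehat D=\Sigma_O$; $\widehat g$ is $C^0$, smooth away from $\Sigma_H$, and along each $\Sigma_{H,j}$ it is a condition-$(\mathbf C)(\mathbf a)$ singularity with $H_+=H_-=0$ because both sides are minimal there. Since $(\Omega,g)\in\Fcirc(\Sigma,\gamma)$, the boundary $\Sigma_O$ keeps its induced metric $\gamma$ and its inward-pointing mean curvature vector, so $(\widehat D,\widehat g)$ is a singular fill-in of $(\Sigma,\gamma)$ in the sense of Definition~\ref{d-fsing} satisfying condition~$(\mathbf C)$ with empty $Q$-part.

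Finally I would run the smoothing of Section~\ref{s-singular} on $(\widehat D,\widehat g)$: Lemmas~\ref{l-smoothing} and~\ref{l-scalar-est} produce smooth metrics $\widehat g_i\to\widehat g$ with $R(\widehat g_i)$ controlled near $\Sigma_H$ and unchanged near $\Sigma_O$, and then (with $n=3$) $h^{(i)}:=u_i^{4}\,\widehat g_i$, with $u_i$ as in \eqref{e-conformally-scalar-flat}, is smooth, has $R(h^{(i)})\equiv0$, still induces $\gamma$ on $\Sigma_O$ (since $u_i\equiv1$ there), and, by \eqref{e-conformally-scalar-flat-bdry}, has boundary mean curvature
\[
  H_{h^{(i)}}=H_g+4\,\tfrac{\p}{\p\nu}u_i\ \ge\ H_g+4\,\inf_{\Sigma_O}\tfrac{\p}{\p\nu}u_i \quad\text{on }\Sigma_O .
\]
By Lemma~\ref{l-du-boundary}(i), $\liminf_{i\to\infty}\inf_{\Sigma_O}\tfrac{\p}{\p\nu}u_i\ge0$, so for $i$ large $H_{h^{(i)}}>H_g-\eta$ pointwise on $\Sigma_O$; taking $(D,h):=(\widehat D,h^{(i)})$ for such $i$ yields a smooth metric with $R(h)\ge0$, a single boundary $S_O=\Sigma_O$ with induced metric $\gamma$ and inward mean curvature vector, and $H_h>H_g-\eta$, i.e.\ $(D,h)\in\FF(\Sigma,\gamma)\subset\Fcirc(\Sigma,\gamma)$. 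The main obstacle is not any single estimate --- each is already in the paper or in \cite{MS15,M-M} --- but rather making the reduction genuinely local and keeping \emph{pointwise} (not merely integrated) control of the sign of $\tfrac{\p}{\p\nu}u_i$ on $S_O$: the first is automatic because the cap and the corner-smoothing each live near a single minimal sphere, and the second is exactly what Lemma~\ref{l-du-boundary}(i) supplies.
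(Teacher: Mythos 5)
Your argument is essentially a reconstruction of \cite[Lemma 2.2]{M-M}, to which the paper defers with the one-line remark that connectedness of $\Sigma$ plays no role: the stability-plus-$R>0 \Rightarrow \lambda_1(-\Delta_\gamma + K_\gamma)>0$ step, the Mantoulidis--Schoen fill-in of each component of $\Sigma_H$, the corner-smoothing, and the conformal flattening with pointwise control of $\tfrac{\partial}{\partial\nu}u_i$ on $\Sigma_O$ via Lemma \ref{l-du-boundary}(i) all match the cited argument. One small imprecision worth flagging: the construction in \cite{MS15} does not directly yield a smooth compact manifold with totally geodesic boundary; it yields a collar $[0,1]\times S^2$ with totally geodesic inner end and a strictly mean-convex round outer end, which must then be capped (e.g.\ by a flat ball), so besides $\Sigma_H$ the singular set $P$ also contains the cap/collar interface, a second condition-$(\mathbf{C})(\mathbf{a})$ hypersurface with $H_+ < H_-$; since the smoothing machinery of Section \ref{s-singular} (equivalently \cite{Miao02}) treats any such $P$, this does not affect your conclusion.
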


\begin{lemma}[Doubling, cf. {\cite[Lemma 2.3]{M-M}}] \label{l-doubling}
	Suppose $(\Omega, g) \in \Fcirc(\Sigma, \gamma)$, and that $\Sigma_H = \p \Omega \setminus \Sigma_O$ is nonempty. Let $D$ denote the doubling of $\Omega$ across $\Sigma_H$, so that $\p D = \Sigma_O \cup \Sigma_O'$, where $\Sigma_O'$ denotes the mirror image of $\Sigma_O$. For every $\eta > 0$ there exists a scalar-flat Riemannian metric $h$ on $D$ such that $(D, h)$ satisfies:
	\begin{enumerate}
		\item $\Sigma_O$ with the induced metric from $h$ is isometric to $(\Sigma, \gamma)$,
		\item $H_h > H_g$ on $\Sigma_O$, and
		\item $H_h' > H_g - \eta$ on $\Sigma_O'$,
	\end{enumerate}
	where $H_g$ denotes the mean curvature of $\Sigma_O$ in $(\Omega, g)$, and $H_h$, $H_h'$ denote the mean curvatures of $\Sigma_O$, $\Sigma_O'$ in $(D, h)$.
\end{lemma}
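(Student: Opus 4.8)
The plan is to follow the construction of \cite[Lemma 2.3]{M-M} and simply verify that connectedness of $\Sigma$ (equivalently, of $\Sigma_O$) is never invoked. First I would form the topological double $D$ of $\Omega$ across $\Sigma_H$, with the doubled metric $\bar h$: smooth away from $\Sigma_H$, Lipschitz across $\Sigma_H$, reflection--symmetric, with $\p D = \Sigma_O \sqcup \Sigma_O'$. In Fermi coordinates $(t,z)$ over $\Sigma_H$, $t$ the signed distance, $\bar h = dt^2 + g_{|t|}(z)$ where $g_s$ is the metric induced on the $s$--level set in $\Omega$; since $\Sigma_H$ is minimal we have $\tr_{g_0}(\partial_s g_s|_{s=0}) = 0$, so the mean curvatures of $\Sigma_H$ from the two sides with respect to $\partial_t$ agree and vanish. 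Thus $\bar h$ has along $\Sigma_H$ precisely a condition--$(\mathbf{C})(\mathbf{a})$ corner with $H_+ = H_- = 0$, it satisfies $R(\bar h) = R(g)\ge 0$ off $\Sigma_H$, and $\p D$ is mean--convex with boundary mean curvature $H_g$ on each of the two copies.

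Next I would smooth $\bar h$ near $\Sigma_H$ exactly by Miao's mollification \cite[Prop.~3.1]{Miao02}---this is the special case $P = \Sigma_H$, $Q = \emptyset$ of Lemma \ref{l-smoothing}---to obtain, for small $\eps > 0$, a smooth reflection--symmetric metric $\bar h_\eps$ on $D$ that agrees with $\bar h$ outside the $\eps$--collar $P(\eps)$ of $\Sigma_H$, is uniformly equivalent to $\bar h$, has $R(\bar h_\eps) \ge 0$ off $P(\eps)$ and $R(\bar h_\eps) \ge -c$ on $P(\eps)$ with $c$ independent of $\eps$; the point is that the ``bad'' term $\eps^{-2}(H_- - H_+)$ in Lemma \ref{l-smoothing}(iii) is absent because $\Sigma_H$ is minimal. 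Since $|P(\eps)|_{\bar h_\eps} = O(\eps)$, the negative part of $R(\bar h_\eps)$ is small in $L^{3/2}$, so for small $\eps$ the operator $-\Delta_{\bar h_\eps} + \tfrac18 R(\bar h_\eps)$ with Dirichlet data on $\p D$ has positive first eigenvalue (as in the proof of Theorem \ref{t-fsing}; cf.\ Lemma \ref{l-scalar-est}); solving $-\Delta_{\bar h_\eps} w_\eps + \tfrac18 R(\bar h_\eps) w_\eps = 0$ with $w_\eps \equiv 1$ on $\p D$ and setting $h_\eps := w_\eps^4 \bar h_\eps$ yields a smooth \emph{scalar--flat} metric inducing $\gamma$ on both $\Sigma_O$ and $\Sigma_O'$, with $H_{h_\eps} = H_g + 4\tfrac{\p}{\p \nu}w_\eps$ on each copy; elliptic estimates give $w_\eps \to 1$ in $C^2_{\loc}$ away from $\Sigma_H$, hence $\tfrac{\p}{\p\nu} w_\eps \to 0$ uniformly on $\p D$, so $H_{h_\eps}, H_{h_\eps}' > H_g - \eta$ there for small $\eps$.

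To promote the $\Sigma_O$ inequality to a strict one I would, as in \cite{M-M}, perform one further modification supported in a thin collar of $\Sigma_O$---this is legitimate precisely because the statement places no constraint on the metric induced on $\Sigma_O'$---raising the mean curvature of $\Sigma_O$ by a fixed positive amount at the cost of a small, collar--supported negative part in the scalar curvature, and then removing that defect by a last conformal rescaling with $w \equiv 1$ on $\p D$; since the defect lives in a thin collar adjacent to $\Sigma_O$ and far from $\Sigma_O'$, the rescaling costs only a lower--order amount of mean curvature on $\Sigma_O$ and a negligible amount on $\Sigma_O'$, leaving $H_h > H_g$ on $\Sigma_O$, $H_h' > H_g - \eta$ on $\Sigma_O'$, and $R(h) \equiv 0$. \textbf{The main obstacle} is this last quantitative balancing---ensuring the conformal correction does not swallow the mean--curvature gain---which is exactly the content of \cite[Lemma 2.3]{M-M}; beyond that, the only thing genuinely to verify here is that every ingredient above (the corner analysis at $\Sigma_H$, Miao's smoothing, the two conformal deformations, and the mean--curvature formulas on $\p D$) is either strictly local near $\Sigma_H$ or $\Sigma_O$ or else global but insensitive to the number of components of $\p\Omega$---which is the case, so the disconnected statement follows verbatim.
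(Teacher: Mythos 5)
The paper's treatment of this lemma consists solely of the remark that it (and the filling lemma) ``generalize verbatim---the connectedness of $\Sigma$ didn't play a role,'' so your strategy of reprising the construction of \cite[Lemma 2.3]{M-M} and checking that connectedness is never invoked is exactly the paper's intent. Your corner analysis at $\Sigma_H$ (a condition-$(\mathbf{C})(\mathbf{a})$ Lipschitz corner with $H_+ = H_- = 0$ because $\Sigma_H$ is minimal), the Miao-type mollification with no $\eps^{-2}$ defect, and the conformal rescaling to scalar-flatness with $w\equiv 1$ on $\partial D$ are all correct, and each step is visibly local near $\Sigma_H$ or $\Sigma_O$, or global but insensitive to the number of components of $\partial\Omega$---which is the whole point.

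The one place I have a reservation is your final paragraph. You propose to upgrade $H_h > H_g - \eta$ to the strict $H_h > H_g$ on $\Sigma_O$ by bending the metric in a thin collar of $\Sigma_O$ (gaining mean curvature at the cost of a collar-supported $R<0$) and then conformally correcting with $w\equiv 1$ on $\partial D$. These two operations fight each other: the collar-supported negative scalar curvature makes $w$ superharmonic near $\Sigma_O$, forcing $w>1$ just inside and hence $\tfrac{\p}{\p\nu}w<0$ on $\Sigma_O$, which eats into the mean-curvature gain, and it is not clear from your sketch that the bend wins---you yourself flag this balancing as ``the main obstacle'' and defer to \cite{M-M}. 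The asymmetry built into the statement (strict increase on $\Sigma_O$; only an $\eta$-loss, and no constraint on the induced metric, on $\Sigma_O'$) is instead produced cleanly by a final \emph{harmonic} conformal factor $u$ (which preserves scalar flatness) with asymmetric boundary data $u\equiv 1$ on $\Sigma_O$, $u\equiv 1-\delta$ on $\Sigma_O'$: then $u<1$ in the interior, so by the Hopf lemma $\tfrac{\p}{\p\nu}u>0$ on $\Sigma_O$, giving $H_{u^4 h}=H_h+4\tfrac{\p}{\p\nu}u>H_h\ge H_g$ strictly there, while on $\Sigma_O'$ both the mean curvature loss and the change in the induced metric are $O(\delta)$ and can be absorbed in the $\eta$. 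This mechanism is what the structure of the conclusion is signalling, it closes the gap your sketch leaves open, and it is again manifestly independent of the number of boundary components.
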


\begin{lemma}[cf. {\cite[Lemma 3.1]{M-M}}] \label{l-strict-inequality-l}
	If $(\Omega, g) \in \Fcirc(\Sigma, \gamma) \setminus \FF(\Sigma, \gamma)$, then
	\[ \frac{1}{8\pi} \int_{\p \Omega} H_g \, d\sigma < \LL(\Sigma, \gamma) \leq \Lcirc(\Sigma, \gamma). \]
\end{lemma}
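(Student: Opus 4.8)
The bound $\LL(\Sigma,\gamma)\le\Lcirc(\Sigma,\gamma)$ is immediate from $\FF(\Sigma,\gamma)\subseteq\Fcirc(\Sigma,\gamma)$, so the real content is the strict inequality $\tfrac1{8\pi}\int_{\p\Omega}H_g\,d\sigma<\LL(\Sigma,\gamma)$. The plan is to adapt the proof of \cite[Lemma 3.1]{M-M}, using Lemmas \ref{l-filling} and \ref{l-doubling} in place of their connected counterparts. Fix $(\Omega,g)\in\Fcirc(\Sigma,\gamma)\setminus\FF(\Sigma,\gamma)$; then $\Sigma_H:=\p\Omega\setminus\Sigma_O$ is nonempty and minimal, so $\int_{\p\Omega}H_g\,d\sigma=\int_{\Sigma_O}H_g\,d\sigma$, and $H_g>0$ on $\Sigma_O$ since its mean curvature vector points strictly inward. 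First I would produce a genuine smooth fill-in in $\FF(\Sigma,\gamma)$: exactly as in \cite[Lemma 3.1]{M-M} — cutting $\Omega$ along an outermost minimal surface and a local deformation near $\Sigma_H$ that keeps it minimal, neither of which changes $\int_{\Sigma_O}H_g\,d\sigma$ nor is sensitive to the connectedness of $\Sigma$ — one reduces to the case that $\Sigma_H$ is a union of stable minimal $2$-spheres with $R(g)>0$ on $\Sigma_H$, and then Lemma \ref{l-filling} yields some $(\Omega_0,g_0)\in\FF(\Sigma,\gamma)$; in particular $\FF(\Sigma,\gamma)\ne\emptyset$. (Alternatively, $\FF(\Sigma,\gamma)\ne\emptyset$ follows at once from $\Fcirc(\Sigma,\gamma)\ne\emptyset$ together with $\LL=\Lcirc$, Proposition \ref{p-equality-l-ll}.)

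Next I would apply Lemma \ref{l-doubling} to the \emph{original} $(\Omega,g)$: for a suitably small $\eta>0$ this produces a scalar-flat $(D,h)$ with $\p D=\Sigma_O\sqcup\Sigma_O'$, both components isometric to $(\Sigma,\gamma)$, with $H_h>H_g>0$ on $\Sigma_O$ and $H_h'>H_g-\eta>0$ on $\Sigma_O'$. Glue $(\Omega_0,g_0)$ to $(D,h)$ by identifying $\p\Omega_0$ with $\Sigma_O'$ via an isometry of the induced metrics, and call the result $(E,k)$, $E:=\Omega_0\cup_P D$, $P$ the gluing hypersurface. Then $\p E=\Sigma_O\subset\p D$, on which $k$ induces $\gamma$ and has positive mean curvature $H_h$; $k$ is smooth off $P$ with $R(k)\ge0$ there (as $R(g_0)\ge0$ on $\Omega_0$ and $R(h)=0$ on $D$); and near $P$ the metric has the normal form $dt^2+k_\pm(t,z)$ from condition $(\mathbf C)(\mathbf a)$. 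Since the mean curvature vector of $P$ points into $\Omega_0$ from one side and into $D$ from the other, bookkeeping the two normals gives $H_+=-H_h'<0<H_{g_0}=H_-$ along $P$, so $(\mathbf C)(\mathbf a)(\mathrm{ii})$ holds with strict inequality everywhere; there is no $Q$-part, so $(E,k)$ is a singular fill-in of $(\Sigma,\gamma)$ satisfying condition $(\mathbf C)$.

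Finally, Theorem \ref{t-fsing} (with $n=3$) gives $\tfrac1{8\pi}\int_{\p E}H_k\,d\sigma\le\LL(\Sigma,\gamma)$, and, since $H_->H_+$ somewhere (indeed everywhere) on $P$, its equality clause is ruled out. Hence
\[
\frac1{8\pi}\int_{\p\Omega}H_g\,d\sigma=\frac1{8\pi}\int_{\Sigma_O}H_g\,d\sigma\le\frac1{8\pi}\int_{\p E}H_k\,d\sigma<\LL(\Sigma,\gamma),
\]
where the middle inequality uses $\p E=\Sigma_O$ together with $H_h>H_g$ on $\Sigma_O$. This is the assertion.

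I expect the main obstacle to be exactly the \emph{strictness}: the soft bound $\tfrac1{8\pi}\int_{\p\Omega}H_g\,d\sigma\le\LL(\Sigma,\gamma)$ is essentially a tautology given $\LL=\Lcirc$, so everything hinges on converting the minimal boundary component $\Sigma_H$ into a singular fill-in that carries a \emph{genuine} (non-vanishing) mean-curvature jump across an interior hypersurface, to which the rigidity half of Theorem \ref{t-fsing} then applies. The doubling-and-regluing device accomplishes this; the residual work is the sign bookkeeping in condition $(\mathbf C)(\mathbf a)$ and checking that the reductions underpinning Lemma \ref{l-filling} (the outermost minimal surface being a union of $2$-spheres, bumping up $R$ near $\Sigma_H$) remain valid for disconnected $\Sigma_H$, which they do verbatim as in \cite{M-M}.
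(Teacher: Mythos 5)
Your proof is correct in spirit but takes a genuinely different route from the paper. The paper's proof applies the doubling lemma (Lemma \ref{l-doubling}) to get $(D,h)\in\FF(\ol\Sigma,\ol\gamma)$ with $\ol\Sigma=\Sigma\sqcup\Sigma'$, then uses the cutting lemma \cite[Lemma 2.1]{M-M} $k$ times to isolate each \emph{connected} component $(\Sigma_j,\gamma_j)$, invokes the previously established connected identity $\Lcirc(\Sigma_j,\gamma_j)=\LL(\Sigma_j,\gamma_j)$ \cite[Proposition 5.1]{M-M} together with the strict inequality $H_h>H_g$, and sums via the additivity theorem \cite[Theorem 1.2]{M-M}. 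You instead double $(\Omega,g)$, glue a genuine smooth fill-in $(\Omega_0,g_0)\in\FF(\Sigma,\gamma)$ onto the secondary boundary $\Sigma_O'$, and feed the resulting singular fill-in with a corner jump into Theorem \ref{t-fsing}. Your route is self-contained modulo Theorem \ref{t-fsing} and doesn't need to cite the connected case, which is a real structural difference and is a perfectly sensible alternative proof.

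There is, however, one concrete gap. Your gluing along $P\cong\Sigma_O'$ presupposes that the $h$-induced metric on $\Sigma_O'$ is isometric to $\gamma$. Lemma \ref{l-doubling} as stated only asserts this for $\Sigma_O$ (item (1)); the paper's own invocation writes $(D,h)\in\FF(\ol\Sigma,\ol\gamma)$ with $\ol\gamma|_\Sigma=\gamma$ and leaves $\ol\gamma|_{\Sigma'}$ unspecified, precisely because the paper only ever cuts out the $\Sigma$-components and never touches $\Sigma_O'$. Your argument \emph{does} need $\Sigma_O'$ to carry $\gamma$. This is very plausibly true from the construction in \cite{M-M} (the reflection metric induces $\gamma$ on both boundary pieces, the smoothing is supported near $\Sigma_H$, and the conformal factor equals $1$ on $\p D$), but it is not a consequence of the lemma statement you quote and would have to be extracted from the proof of \cite[Lemma 2.3]{M-M} and added to item (1). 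Separately, your parenthetical remark that $\FF(\Sigma,\gamma)\ne\emptyset$ ``follows at once from'' Proposition \ref{p-equality-l-ll} is circular, since that proposition is deduced from the very lemma you are proving; your main path (via the cutting reduction and Lemma \ref{l-filling}) avoids this and should be the only one retained. Finally, a small observation: once $H_h>H_g$ strictly on the compact surface $\Sigma_O$, the middle inequality in your final chain is already strict, so the strictness clause of Theorem \ref{t-fsing} (the $H_->H_+$ somewhere) is actually redundant for the conclusion; only the non-strict inequality from Theorem \ref{t-fsing} is needed.
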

\begin{proof}
	Use the doubling lemma (Lemma \ref{l-doubling}) to get $(D, h) \in \FF(\overline{\Sigma}, \overline{\gamma})$, where $\overline{\Sigma} = \Sigma \sqcup \Sigma'$, $\Sigma' \approx \Sigma$,  $\overline{\gamma}|_{\Sigma} = \gamma$, and which has
	\begin{equation} \label{e-strict-inequality-l}
		H_h > H_g \text{ on } \Sigma.
	\end{equation}
	It will be convenient to split up $(\Sigma, \gamma)$ into its connected components:
	\[ (\Sigma, \gamma) = (\Sigma_1, \gamma_1) \sqcup \cdots \sqcup (\Sigma_k, \gamma_k). \]
	Apply the cutting lemma \cite[Lemma 2.1]{M-M} (which didn't need to be generalized) $k$ times to $(D, h)$ to isolate the boundary components $(\Sigma_j, \gamma_j)$ and obtain $(\Omega_j, \gamma_j) \in \Fcirc(\Sigma_j, \gamma_j)$, $j = 1, \ldots, k$, with boundary mean curvatures $H_{h,j} = H_h|_{\Sigma_j} > H_g|_{\Sigma_j}$.	Fix $j = 1, \ldots, k$. Since $\Sigma_j$ is \emph{connected}, \cite[Proposition 5.1]{M-M} gives $\Lcirc(\Sigma_j, \gamma_j) = \LL(\Sigma_j, \gamma_j)$. Together with \eqref{e-strict-inequality-l}:
	\[ \LL(\Sigma_j, \gamma_j) = \Lcirc(\Sigma_j, \gamma_j) \geq \frac{1}{8\pi} \int_{\p \Omega_j} H_{h,j} \, d\sigma_j > \frac{1}{8\pi} \int_{\Sigma_j} H_g|_{\Sigma_j} \, d\sigma_j. \]
	This implies, by the additivity theorem (\cite[Theorem 1.2]{M-M}), that
	\[ \LL(\Sigma, \gamma) = \sum_{j=1}^k \LL(\Sigma_j, \gamma_j) > \sum_{j=1}^k \int_{\Sigma_j} H_g|_{\Sigma_j} \, d\sigma_j = \int_{\p \Omega} H_g \, d\sigma. \]
	The result follows.
\end{proof}

As a direct corollary we get:

\begin{prop}[cf. {\cite[Proposition 5.1]{M-M}}] \label{p-equality-l-ll}
	$\Lcirc(\Sigma, \gamma) = \LL(\Sigma, \gamma)$.
\end{prop}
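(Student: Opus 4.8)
The plan is to upgrade the trivial bound $\LL(\Sigma,\gamma) \le \Lcirc(\Sigma,\gamma)$, which holds because $\FF(\Sigma,\gamma) \subset \Fcirc(\Sigma,\gamma)$, to an equality by establishing the reverse inequality $\Lcirc(\Sigma,\gamma) \le \LL(\Sigma,\gamma)$. This should fall out directly from Lemma \ref{l-strict-inequality-l}, so the proposition is really just a repackaging of that lemma together with the definitions.

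First I would dispose of the degenerate case $\FF(\Sigma,\gamma) = \emptyset$, in which $\LL(\Sigma,\gamma) = -\infty$ by convention. If some $(\Omega,g)$ belonged to $\Fcirc(\Sigma,\gamma)$, it would necessarily lie in $\Fcirc(\Sigma,\gamma) \setminus \FF(\Sigma,\gamma)$, and Lemma \ref{l-strict-inequality-l} would then give the impossible $\frac{1}{8\pi}\int_{\p\Omega} H_g\, d\sigma < -\infty$. Hence $\Fcirc(\Sigma,\gamma) = \emptyset$ as well, and both invariants equal $-\infty$ by convention.

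Next, assuming $\FF(\Sigma,\gamma) \neq \emptyset$, I would bound the defining supremum of $\Lcirc(\Sigma,\gamma)$ competitor by competitor. Fix an arbitrary $(\Omega,g) \in \Fcirc(\Sigma,\gamma)$. If $(\Omega,g) \in \FF(\Sigma,\gamma)$, then $\frac{1}{8\pi}\int_{\p\Omega} H_g\, d\sigma \le \LL(\Sigma,\gamma)$ immediately from the definition of $\LL$. Otherwise $(\Omega,g) \in \Fcirc(\Sigma,\gamma) \setminus \FF(\Sigma,\gamma)$, and Lemma \ref{l-strict-inequality-l} gives the even stronger $\frac{1}{8\pi}\int_{\p\Omega} H_g\, d\sigma < \LL(\Sigma,\gamma)$. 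In either case the value of this competitor is at most $\LL(\Sigma,\gamma)$, so taking the supremum over all $(\Omega,g) \in \Fcirc(\Sigma,\gamma)$ yields $\Lcirc(\Sigma,\gamma) \le \LL(\Sigma,\gamma)$, and combining with the reverse inequality gives the claim.

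The hard part is not in this argument at all: it was already carried out in the proof of Lemma \ref{l-strict-inequality-l}, via the doubling and cutting constructions (Lemmas \ref{l-doubling} and \cite[Lemma 2.1]{M-M}), the connected-boundary equality \cite[Proposition 5.1]{M-M}, and the additivity theorem \cite[Theorem 1.2]{M-M}. The only point in the present proof requiring any care is keeping the $-\infty$ convention for empty fill-in classes consistent, which is handled in the degenerate case above.
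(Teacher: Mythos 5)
Your proof is correct and follows the same route as the paper: the entire content is Lemma \ref{l-strict-inequality-l}, and the proposition is obtained by taking a supremum over $\Fcirc(\Sigma,\gamma)$, splitting into the $\FF$ and $\Fcirc \setminus \FF$ cases. The paper states this in one line; you spell out the case split and the $-\infty$ convention bookkeeping, which is harmless added detail rather than a different argument.
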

\begin{proof}
	Take the supremum over $\Fcirc(\Sigma, \gamma) \setminus \FF(\Sigma, \gamma)$ in Lemma \ref{l-strict-inequality-l}.
\end{proof}

For the sake of completeness, let's also generalize some auxiliary results from \cite[Section 3]{M-M}, though they're not currently necessary.

\begin{prop}[cf. {\cite[Proposition 3.1]{M-M}}] \label{p-finiteness-l-spheres}
	If $\Sigma$ is a finite union of spheres and $\gamma$ is any Riemannian metric on $\Sigma$, then $\Lcirc(\Sigma, \gamma) < \infty$.
\end{prop}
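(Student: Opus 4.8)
The plan is to reduce to the already-known connected case. Write $(\Sigma, \gamma) = (\Sigma_1, \gamma_1) \sqcup \cdots \sqcup (\Sigma_k, \gamma_k)$ as a disjoint union of its connected components, each of which is a topological sphere. The quickest route is then to combine Proposition \ref{p-equality-l-ll} with the additivity theorem: by Proposition \ref{p-equality-l-ll} we have $\Lcirc(\Sigma, \gamma) = \LL(\Sigma, \gamma)$, and by the additivity theorem \cite[Theorem 1.2]{M-M} this equals $\sum_{j=1}^k \LL(\Sigma_j, \gamma_j)$. Each summand is finite by \cite[Proposition 3.1]{M-M} (equivalently \cite[Theorem 1.3]{M-M}) applied to the single sphere $(\Sigma_j, \gamma_j)$, so $\Lcirc(\Sigma, \gamma) < \infty$.

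For a self-contained argument that parallels \cite[Proposition 3.1]{M-M} directly, one instead argues as in the proof of Lemma \ref{l-strict-inequality-l}. Take an arbitrary $(\Omega, g) \in \Fcirc(\Sigma, \gamma)$ with $\p \Omega = \Sigma_O \sqcup \Sigma_H$, where $\Sigma_O$ is isometric to $(\Sigma, \gamma)$. Apply the cutting lemma \cite[Lemma 2.1]{M-M} $k$ times to isolate the boundary components, producing for each $j$ a manifold $(\Omega_j, g_j) \in \Fcirc(\Sigma_j, \gamma_j)$ whose outer boundary component has mean curvature $H_{g_j} > H_g|_{\Sigma_j}$. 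Since $\Sigma_H$ is minimal, $\int_{\p \Omega} H_g \, d\sigma = \sum_{j=1}^k \int_{\Sigma_j} H_g \, d\sigma_j < \sum_{j=1}^k \int_{\p \Omega_j} H_{g_j} \, d\sigma_j \le 8\pi \sum_{j=1}^k \Lcirc(\Sigma_j, \gamma_j)$, and the right-hand side is a finite constant depending only on $(\Sigma, \gamma)$, by \cite[Proposition 3.1]{M-M} and Proposition \ref{p-equality-l-ll} applied to each connected sphere. Taking the supremum over $(\Omega, g) \in \Fcirc(\Sigma, \gamma)$ yields $\Lcirc(\Sigma, \gamma) < \infty$.

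There is no real obstacle here: the substantive input — finiteness of $\LL$ (equivalently $\Lcirc$) for a single topological sphere — is precisely \cite[Proposition 3.1]{M-M}/\cite[Theorem 1.3]{M-M}, and everything else is the same disconnected-to-connected bookkeeping (the cutting lemma, additivity, and Proposition \ref{p-equality-l-ll}) already used in this appendix. The only minor point to keep in mind is that the cutting procedure may introduce additional minimal boundary components, but these are allowed in $\Fcirc$ and contribute nothing to the total boundary mean curvature.
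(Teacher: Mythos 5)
Your first argument is exactly the paper's proof: invoke Proposition \ref{p-equality-l-ll} to identify $\Lcirc$ with $\LL$, then use the additivity theorem \cite[Theorem 1.2]{M-M} together with the single-sphere finiteness result \cite[Theorem 1.3]{M-M} (equivalently \cite[Proposition 3.1]{M-M}). The alternative argument via the cutting lemma is also correct, but it is not needed once Proposition \ref{p-equality-l-ll} is available.
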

\begin{proof}
	From Proposition \ref{p-equality-l-ll}, $\Lcirc(\Sigma, \gamma) = \LL(\Sigma, \gamma)$. The latter is known to be finite by combining the additivity theorem \cite[Theorem 1.2]{M-M} with the finiteness theorem for single 2-spheres \cite[Theorem 1.3]{M-M}.
\end{proof}

\begin{prop}[Rigidity in $\F$, cf. {\cite[Proposition 3.4]{M-M}}]
	If $(\Omega, g) \in \Fcirc(\Sigma, \gamma)$ attains the supremum $\Lcirc(\Sigma, \gamma)$, i.e., if
	\[ \frac{1}{8\pi} \int_{\p \Omega} H_g \, d\sigma = \Lcirc(\Sigma, \gamma), \]
	then $\Sigma$ is connected, $(\Omega, g) \in \FF(\Sigma, \gamma)$, and it's isometric to a mean-convex handlebody with flat interior whose genus is that of $\Sigma$. If $\Sigma$ were a sphere, then $(\Omega, g)$ can be isometrically immersed in $(\R^3, g_0)$.
\end{prop}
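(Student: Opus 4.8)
The plan is to first use the results of this appendix to drop from $\Fcirc$ down to the smooth non‑degenerate class $\FF$, and then import, essentially verbatim, the rigidity analysis behind \cite[Theorem 1.4]{M-M}, whose internal steps do not see the number of boundary components.

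\emph{Step 1: reduce to $\FF$.} By Proposition \ref{p-equality-l-ll} we have $\Lcirc(\Sigma,\gamma)=\LL(\Sigma,\gamma)$, so the hypothesis says $(\Omega,g)$ attains $\LL(\Sigma,\gamma)$. If we had $(\Omega,g)\in\Fcirc(\Sigma,\gamma)\setminus\FF(\Sigma,\gamma)$, then Lemma \ref{l-strict-inequality-l} would give $\tfrac1{8\pi}\int_{\p\Omega}H_g\,d\sigma<\LL(\Sigma,\gamma)=\Lcirc(\Sigma,\gamma)$, contradicting attainment. Hence $(\Omega,g)\in\FF(\Sigma,\gamma)$: in particular $\p\Omega=\Sigma$ (no unprescribed minimal boundary appears), the mean curvature vector of $\Sigma$ points strictly inward, $R(g)\ge0$, and $\tfrac1{8\pi}\int_\Sigma H_g\,d\sigma=\LL(\Sigma,\gamma)$.

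\emph{Step 2: $g$ is flat.} Run the two conformal/tensorial variations from the proof of \cite[Theorem 1.4]{M-M}. First, since the Dirichlet first eigenvalue of $-\Delta_g+\tfrac18R(g)$ is positive, there is $u>0$ with $-\Delta_g u+\tfrac18R(g)u=0$ on $\Omega$, $u|_\Sigma=1$; then $u^4g$ is scalar‑flat, induces $\gamma$ on $\Sigma$, and has boundary mean curvature $H_g+4\,\tfrac{\p}{\p\nu}u\ge H_g>0$, so $(\Omega,u^4g)\in\FF(\Sigma,\gamma)$. If $R(g)>0$ somewhere, then $\tfrac{\p}{\p\nu}u>0$ somewhere on $\Sigma$ by the Hopf lemma, so $\tfrac1{8\pi}\int_\Sigma H_{u^4g}\,d\sigma>\LL(\Sigma,\gamma)$ — impossible; hence $R(g)\equiv0$. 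Next, for a cutoff $\chi$ supported in $\Omega\setminus\Sigma$ set $g_t:=g-t\chi\,\mathring{\Ric}(g)$, re‑solve the conformal equation, and expand the total boundary mean curvature in $t$: the first‑order term is a positive multiple of $\int_\Omega\chi\,|\mathring{\Ric}(g)|^2$, so if $\mathring{\Ric}(g)\not\equiv0$ we again beat $\LL(\Sigma,\gamma)$. Therefore $\mathring{\Ric}(g)\equiv0$, hence $\Ric(g)\equiv0$, hence $g$ is flat since $\dim\Omega=3$. None of these computations uses connectedness of $\Sigma$.

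\emph{Step 3: handlebody structure, connectedness, and the spherical case.} Now $(\Omega,g)$ is a compact connected flat $3$‑manifold with strictly mean‑convex boundary. By the topological classification underlying \cite[Theorem 1.4]{M-M} (via \cite[Proposition 1]{MeeksSimonYau82}), $(\Omega,g)$ is isometric to a mean‑convex handlebody with flat interior; since a handlebody has connected boundary, $\Sigma=\p\Omega$ is connected and its genus equals that of $\Omega$. If $\Sigma$ is a sphere the genus is $0$, so $\Omega$ is a flat $3$‑ball, and its developing map gives an isometric immersion into $(\R^3,g_0)$, exactly as in the harmonically flat discussion of Section \ref{s-hf-asymptotics}. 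The main obstacle is purely bookkeeping: one must confirm that the flatness argument and the handlebody conclusion of \cite[Theorem 1.4]{M-M} go through verbatim for disconnected $\Sigma$ — the conformal and Ricci‑perturbation steps are local and indifferent to the component count, and the passage "flat, strictly mean‑convex boundary $\Rightarrow$ handlebody" is precisely what simultaneously forces $\Sigma$ to be connected.
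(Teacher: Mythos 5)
Your proof mirrors the paper's: Step 1 is exactly the paper's reduction to $\FF$ via Lemma~\ref{l-strict-inequality-l}, and Steps 2--3 spell out the internal content of \cite[Theorem 1.4]{M-M}, which the paper simply invokes as a black box. Since \cite[Theorem 1.4]{M-M} (unlike $\Fcirc$, $\Lcirc$) was already stated in \cite{M-M} for possibly disconnected $\Sigma$ and itself concludes connectedness, your closing caveat about ``bookkeeping'' is superfluous --- the cited theorem applies directly, which is why the paper's own proof is a two-line citation.
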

\begin{proof}
	The fact that $(\Omega, g) \in \FF(\Sigma, \gamma)$ follows from Lemma \ref{l-strict-inequality-l}. The result follows from the rigidity theorem for $\FF$, \cite[Theorem 1.4]{M-M}.
\end{proof}


\begin{thebibliography}{50}


\bibitem{AgostinianiMazzieri16}
V. Agostiniani, L. Mazzieri,
{\sl Monotonicity formulas in potential theory},
preprint. arXiv:1606.02489.

\bibitem{AgostinianiMazzieri17}
V. Agostiniani, L. Mazzieri,
{\sl On the geometry of the level sets of bounded static potentials},
Comm. Math. Phys. \textbf{355} (2017), no. 1, 261--301.
	
\bibitem{ADM59}
R. Arnowitt, S. Deser, C. W. Misner,
{\sl  Dynamical structure and definition of energy in general relativity},
Phys. Rev. (2) \textbf{116} (1959), 1322--1330.

\bibitem{Bray01}
H. L. Bray,
{\sl Proof of the Riemannian Penrose inequality using the positive mass theorem},
J. Differential Geometry, \textbf{59} (2001), 177--267.

\bibitem{BrayMiao07}
H. L. Bray, P. Miao,
{\sl On the capacity of surfaces in manifolds with nonnegative scalar curvature},
Invent. Math. \textbf{172} (2008), no. 3, 459--475.

\bibitem{BrayMorgan02}
H. L. Bray, F. Morgan,
{\sl An isoperimetric comparison theorem for Schwarzschild space and other manifolds},
Proc. Amer. Math. Soc. \textbf{130} (2002), no. 5, 1467--1472.

\bibitem{BrendleMarques11}
S. Brendle, F. C. Marques,
{\sl Scalar curvature rigidity of geodesic balls in $S^n$},
J. Differential Geom.  \textbf{88} (2011),  no. 3, 379--394.

\bibitem{BrownYork1}
J. D. Brown, J. W. York, Jr.,
{\sl Quasilocal energy in general relativity},
in {Mathematical aspects of classical field theory (Seattle, WA,   1991)},
volume 132 of {Contemp. Math.}, pages 129--142. Amer. Math. Soc., Providence, RI, 1992.

\bibitem{BrownYork2}
J. D. Brown, J. W. York, Jr.,
{\sl  Quasilocal energy and conserved charges derived from the   gravitational action},
{Phys. Rev. D (3)}, 47(4):1407--1419, 1993.

\bibitem{BuntingMasood87}
G. L. {Bunting}, A. K. M. Masood-ul-Alam,
{\sl Nonexistence of multiple black holes in asymptotically Euclidean static vacuum space-time},
Gen. Relativity Gravitation \textbf{19} (1987), no. 2, 147--154.

\bibitem{ChodoshEichmairShiYu}
O. Chodosh, M. Eichmair, Y.-G. Shi, H. Yu,
{\sl Isoperimetry, scalar curvature, and mass in asymptotically flat Riemannian 3-manifolds},
preprint. arXiv: 1606.04626.

\bibitem{FanShiTam09}
X.-Q. Fan, Y.-G. Shi, L.-F. Tam,
{\sl Large-sphere and small-sphere limits of the Brown--York mass},
Comm. Anal. Geom \textbf{17} (2009), no. 1, 37--72.

\bibitem{GilbargTrudinger01}
D. Gilbarg, N. S. Trudinger,
{\sl Elliptic partial differential equations of second order}.
Reprint of the 1998 edition. Classics in Mathematics. Springer-Verlag, Berlin, 2001. xiv+517 pp. ISBN: 3-540-41160-7.

\bibitem{Hawking68}
S. W.  Hawking,
{\sl Gravitational radiation in an expanding universe},
J. Math. Phys., \textbf{9} (1968), 598--604.

\bibitem{HuiskenIlmanen01}
G. Huisken, T. Ilmanen,
{\sl The Inverse Mean Curvature Flow and the Riemannian Penrose Inequality},
J. Differential Geom., \textbf{59} (2001), no. 3, 353--437.

%\bibitem{KobayashiNomizu63}
%S. Kobayashi, K. Nomizu,
%{\sl Foundations of differential geometry. Vol. I.},
%Reprint of the 1963 original. Wiley Classics Library. A Wiley--Interscience Publication. John Wiley \& Sons, Inc., New York, 1996.

\bibitem{Lee13}
D. A. Lee,
{\sl
A positive mass theorem for Lipschitz metrics with small singular sets},
Proc. Amer. Math. Soc. \textbf{ 141}  (2013),  no. 11, 3997--4004.

\bibitem{LSW63}
W. Littman, G. Stampacchia, H. F. Weinberger,
{\sl Regular points for elliptic equations with discontinuous coefficients},
Ann. Scuola Norm. Sup. Pisa (3) {\textbf 17} (1963), 43--77.

\bibitem{L-M}
C. Li, C. Mantoulidis, {\sl Positive scalar curvature with skeleton singularities},
Math. Ann. {\textbf 374} (2019), no. 1-2, 99--131. 

\bibitem{Lu-isometric-embedding}
S. Lu,
{\sl On Weyl's  embedding problem in Riemannian manifolds},
to appear in Int. Math. Res. Not., rny109. arXiv: 1608.07539.

\bibitem{M-M}
C. Mantoulidis, P. Miao,
{\sl Total mean curvature, scalar curvature, and a variational analog of Brown--York mass},
Commun. Math. Phys. {\textbf 352} (2017), no. 2, 703--718.

\bibitem{M-M-CMSA}
C. Mantoulidis, P. Miao,
{\sl Mean curvature deficit and a quasi-local mass},
Nonlinear Analysis in Geometry and Applied Mathematics, 99-107, Harvard University Center of Mathematical Sciences and Applications (CMSA) Series in Mathematics, Volume {\bf 1}, Int. Press, Somerville, MA, 2017.

\bibitem{MS15}
C. Mantoulidis, R.  Schoen,
{\sl On the Bartnik mass of apparent horizons},
Class. Quantum Grav. {\textbf 32} (2015), no. 20, 205002, 16 pp.

\bibitem{McFeronSzekelyhidi12}
D. McFeron, G. Sz{\'e}kelyhidi,
{\sl On the positive mass theorem for manifolds with corners},
Comm. Math. Phys. {\textbf 313} (2012), no. 2, 425--443.

\bibitem{MeeksSimonYau82}
W. Meeks, III, L. Simon, S.-T. Yau,
{\sl Embedded minimal surfaces, exotic spheres, and manifolds with positive Ricci curvature}, Ann. of Math. (2) \textbf{116} (1982), no. 3, 621--659.

\bibitem{Miao02}
P. Miao,
{\sl  Positive mass theorem on manifolds admitting corners along a hypersurface},
Adv. Theor. Math. Phys. \textbf{6} (2002), no. 6, 1163--1182 (2003).

\bibitem{Miao09}
P. Miao,
{\sl On a localized Riemannian Penrose inequality,} Commun. Math. Phys., \textbf{292} (2009),  271--284.

\bibitem{MyersSteenrod39}
S. B. Myers, N. E. Steenrod,
{\sl The group of isometries of a Riemannian manifold},
Ann. of Math. (2) \textbf{40} (1939), no. 2, 400--416.

\bibitem{Nirenberg53}
J. Nirenberg,
{\sl The Weyl and Minkowski problems in differential geometry in the large}, Comm. Pure Appl. Math. \textbf{6} (1953), 337--394.

\bibitem{Penrose73}
R. Penrose,
{\sl Naked Singularities},
Ann. New York Acad. Sci. \textbf{224} (1973), 125--134.

\bibitem{PolyaSzego51}
G. P{\'o}lya, G. Szeg{\"o},
{\sl Isoperimetric Inequalities in Mathematical Physics},
Annals of Mathematics Studies, no. 27. Princeton University Press, Princeton, NJ, 1951.

\bibitem{Pogorelov64}
A. V.  Pogorelov,
{\sl Some results on surface theory in the large},
Advances in Math. \textbf{1} (1964),  no. 2, 191--264.

\bibitem{ShiTam02}
Y.-G. Shi, L.-F. Tam,
{\sl Positive mass theorem and the boundary behaviors of compact manifolds with nonnegative scalar curvature},
J. Differential Geom. \textbf{62} (2002), 79--125.

\bibitem{ShiTam07}
Y.-G. Shi,  L.-F. Tam,
{\sl Quasi-local mass and the existence of horizons},
Comm. Math. Phys. \textbf{274} (2007), no. 2, 277--295.

\bibitem{ShiTam17}
Y.-G. Shi,  L.-F. Tam,
{\sl Scalar curvature and singular metrics},
Pacific J. Math. \textbf{293} (2018), no. 2, 427--470

\bibitem{ShiWangWu09}
Y.-G. Shi, G. Wang, J. Wu,
{\sl On the behavior of quasi-local mass at the infinity along nearly round surfaces}
Ann. Global Anal. Geom. \textbf{36} (2009), no. 4, 419--441.

\bibitem{SmithYang92}
P. D. Smith, D. Yang,
{\sl Removing point singularities of Riemannian manifolds},
Trans. Amer. Math. Soc. \textbf{333} (1992), no. 1, 203--219.

\end{thebibliography}
\end{document}